\definecolor{citation}{rgb}{0.2,0.58,0.2} 
\definecolor{formula}{rgb}{0.1,0.2,0.6}
\definecolor{url}{rgb}{0.3,0,0.5}
\newcommand{\reqnomode}{\tagsleft@false}
\def\dx{\,{\rm d}x}
\def\dy{\,{\rm d}y}
\def \d{\,{\rm d}}
\def\dist{\,{\rm dist}}
\DeclareRobustCommand*{\bfseries}{%
  \not@math@alphabet\bfseries\mathbf
  \fontseries\bfdefault\selectfont
  \boldmath
}
\newlength{\defbaselineskip}
\newcommand{\setlinespacing}[1]
           {\setlength{\baselineskip}{#1 \defbaselineskip}}
\newcommand{\mint}{\mathop{\int\hskip -1,05em -\, \!\!\!}\nolimits}
\newtheorem{theorem}{Theorem}
\newtheorem{definition}{Definition}
\newtheorem{remark}{Remark}[section]
\newtheorem{lemma}{Lemma}[section]
\newtheorem{proposition}{Proposition}[section]
\numberwithin{equation}{section}
\def\en{\mathbb N}
\def\er{\mathbb R}
\newcommand{\erN}{\mathbb{R}^N}
\newcommand\eps\varepsilon
\def\eqn#1$$#2$${\begin{equation}\label#1#2\end{equation}}
\newcommand{\ai}{a_{{\rm i}}}
\newcommand{\be}{\begin{equation}}
\newcommand{\oo}{\texttt{o}}
\newcommand{\ee}{\end{equation}}
\newcommand{\rr}{\varrho}
\newcommand{\snr}[1]{\lvert #1\rvert}
\newcommand{\nr}[1]{\lVert #1 \rVert}
\newcommand{\RN}{\mathbb{R}^{N}}
\def\name[#1, #2]{#1 #2}
\newcommand{\kk}{\kappa}
\newcommand{\rif}[1]{(\ref{#1})}
\newcommand{\trif}[1] {\textnormal{\rif{#1}}}
\newcommand{\stackleq}[1]{\stackrel{\rif{#1}}{ \leq}}
\newcommand{\cc}[1]{\textcolor[rgb]{0.00,0.00,0.00}{#1}}
\newcommand{\ccc}[1]{\textcolor[rgb]{0.00,0.00,0.00}{#1}}
\newcommand{\divo}{\textnormal{div}}
\def\loc{\operatorname{\rm{loc}}}
\title[%{\color{magenta} ??? Obstacle problem and ??? }
On the regularity of minima of non-autonomous functionals]{On the regularity of minima of  \\non-autonomous functionals
%Removable sets for continuous solutions\\ to quasilinear non-uniformly elliptic equations
}
\author{Cristiana De Filippis}  \address{Cristiana De Filippis\\Mathematical Institute, University of Oxford\\ Andrew Wiles Building, Radcliffe Observatory Quarter, Woodstock Road, Oxford, OX26GG, Oxford, United Kingdom} \email{\texttt{Cristiana.DeFilippis@maths.ox.ac.uk}}
\author{Giuseppe Mingione}  \address{Giuseppe Mingione\\Dipartimento SMFI, Universit\`a di Parma, Viale delle Scienze 53/a, Campus, 43124 Parma, Italy} \email{\texttt{giuseppe.mingione@unipr.it}}
\begin{document}

\subjclass[2010]{35J60, 35J70\vspace{1mm}} %%ALERT CHECK 35J60 23J70 35B65 35D40

\keywords{Regularity, non-autonomous functionals, $(p,q)$-growth\vspace{1mm}}

\thanks{{\it Acknowledgements.}\ This work is supported by the Engineering and Physical Sciences Research Council (EPSRC): CDT Grant Ref. EP/L015811/1. 
\vspace{1mm}}

\maketitle

\begin{abstract}
We consider regularity issues for minima of non-autonomous functionals in the Calculus of Variations exhibiting non-uniform ellipticity features. We provide a few sharp regularity results for local minimizers that also cover the case of functionals with nearly linear growth. The analysis is carried out provided certain necessary approximation-in-energy conditions are satisfied. These are related to the occurrence of the so-called Lavrentiev phenomenon 
%%%bozze that
that non-autonomous functionals might exhibit, and which is a natural obstruction to regularity. In the case of vector valued problems we concentrate on higher gradient integrability of minima. Instead, in the scalar case, we prove local Lipschitz estimates. We also present an approach via a variant of Moser's iteration technique that allows to reduce the analysis of several non-uniformly elliptic problems to that for uniformly elliptic ones. \end{abstract}
\vspace{3mm}
{\small \tableofcontents}

\setlinespacing{1.08}
\section{Introduction}
In this paper we collect a few results and techniques concerning the regularity of minima of non-autonomous elliptic functionals of the type
\eqn{genF}
$$
W^{1,1}(\Omega,\erN) \ni w   \mapsto\mathcal F(w, \Omega):= \int_{\Omega} F(x, Dw)  \dx\;.
$$
In \rif{genF}, as in the rest of the paper, $\Omega \subset \er^n$ denotes a bounded open domain, for $n\geq 2$. The function $F \colon \Omega \times \er^{N\times n}\to [0, \infty)$ is Carath\'eodory regular and $N\geq 1$; we also assume that, whenever they are considered, derivatives of $F(\cdot)$ with respect to the gradient variable are also Carath\'eodory regular. The case $N>1$ is usually appealed 
%%%bozze to
to as the vectorial case. In our setting a function $u \in W^{1,1}_{\loc}(\Omega,\er^N)$ is a \emph{local minimizer} of the functional $\mathcal F$ in \eqref{genF} if $F(\cdot, Du)\in L^1_{\loc}(\Omega)$ and $\mathcal F(u;\tilde \Omega)\leq \mathcal F(w;\tilde \Omega)$ holds for every competitor $w \in u + W^{1,1}_0(\tilde \Omega; \er^N)$ and for every open subset $\tilde \Omega\Subset \Omega$. 

The main point here is that the functionals in question here exhibit non-uniform ellipticity features. These emerge when looking at the Euler-Lagrange equation 
$
\divo\, \partial_{z} F(x,Du)=0,
$
whose rate of non-uniform ellipticity is quantified by the ratio \cc{$\mathcal R(z, B)$} (on any ball $B\subset \Omega$) 
$$
\mathcal R(z, B):= \frac{\mbox{$\sup_{x\in B}$ of the highest eigenvalue of}\ \partial_{zz} F(x,z)}{\mbox{$\inf_{x\in B}$ of the lowest eigenvalue of}\  \partial_{zz} F(x,z)} $$
that in the non-uniformly elliptic case becomes in fact unbounded as $|z|\to \infty$. For instance, this is not the case of $p$-Laplacean type functionals, i.e., $F(x, z)\approx |z|^p$, for which $\mathcal R(z, B)\equiv 1$. See \cite{KM1, KM2, Dark, Uh, Ur} for regularity results in this situation. This is instead the case of the double phase functional \cite{BCM3, CM2, Z2, Z3}
\eqn{duefasi}
$$
w
\mapsto \int_{\Omega} \left(|Dw|^p + a(x)|Dw|^q\right)  \dx\;, \qquad 0\leq a(\cdot)\in L^{\infty} \qquad 1<p<q\;,
$$
where it is $\mathcal R(z, B)\approx  \ccc{1+\|a\|_{L^{\infty}(B)}|z|^{q-p}}$ on any ball $B$ intersecting $\{a(x)=0\}$. Another instance is given by the variable exponent energy 
\eqn{pLapx}
$$
 w   \mapsto  \int_{\Omega} |Dw|^{p(x)}   \dx\;, \qquad p(x)>1
$$
and in this case it is $\mathcal R(z, B)\approx  |z|^{p_+-p_-}$
%%%bozze for $|z|$ large
, for $|z|$ large, where $p_-:= \min_B \, p(x)$ and $p_+:= \max_B \, p(x)$. There is a by now extensive literature on the regularity for minima of functionals \rif{duefasi}-\rif{pLapx}, see for instance \cite{BCM1, BCM3, byun1, byun2, CM2, CM3, me, demicz, demicz2, deoh, depa, PRR, Radu1, RT1} and \cite{Dark, Raduover} for overviews.  More in general, larger classes of functionals defined in so-called Musielak-Orlicz spaces are defined by
\eqn{PPhi}
$$
 w   \mapsto  \int_{\Omega} \Phi(x, |Dw|)   \dx\;, 
$$
where, \cc{$\Phi \colon \Omega \times [0, \infty)\to [0,\infty)$} is a Carath\'eodory function such that for each choice of $x \in \Omega$, the partial map $t \mapsto \Phi(x, t)$ is a Young function and thereby generates an Orlicz space (that changes with $x$). For this we refer to  \cite{HHT, hastobook, Hasto, Hastok, Radu2}. The common feature of many of such functionals is that they satisfy the so-called $(p,q)$-growth conditions
\eqn{cond-p}
$$
|z|^p \lesssim F(x, w, z)\lesssim |z|^q+1\,, \qquad \mbox{for}\ 1 < p  < q\;.
$$
We refer to the basic papers of Marcellini \cite{M1, M2, M3}, where the first regularity results have been obtained under assumptions \rif{cond-p}.

In this paper we want to collect a few general results on functionals of the type \rif{genF} under $(p,q)$-growth conditions as in \rif{cond-p}, that extend those in available literature, and in various directions. For instance, we consider conditions where the only possible polynomial bound from below as in \rif{cond-p} is $p=1$. Specifically, we relax the lower bound in \rif{cond-p} to allow nearly linear growth conditions  in the gradient; in this case a model is
\eqn{switch}
$$w \mapsto \int_{\Omega} \left[|Dw|\log(1+|Dw|)+a(x)(1+|Dw|^2)^{\frac q2}\right]  \dx\,, \quad 0\leq a(\cdot)\in L^{\infty}\,,\ \quad 1<q\;.$$
Further examples are in Remark \ref{estensione} below. 
Some of our a priori estimates techniques can also be used in different, more geometric settings. In this case a relevant model functional is
 \eqn{switchlinear}
$$w \mapsto \int_{\Omega} \left[(1+|Dw|^\kappa)^{1/\kappa}+a(x)(1+|Dw|^2)^{\frac q2}\right]  \dx\;, \qquad 0\leq a(\cdot)\in L^{\infty} \qquad 1<q,\kappa\;.$$
This has linear growth in the gradient on the set $\{a(x)=0\}$. The full treatment of functionals as in \rif{switchlinear} involves a suitable use of relaxed functionals and spaces of BV functions \cite{Bi, BS1, franz1, franz2}. An instance of the results included here is
\begin{theorem}\label{sample1} Let $u \in W^{1,1}_{\loc}(\Omega)$ be a local minimizer of the functional in 
\trif{switch} and assume
\begin{flalign}\label{pqsample}
0 \leq a(\cdot) \in W^{1,r}_{\loc}(\Omega) \cap L^{\infty}_{\loc}(\Omega)\qquad \mbox{and}\qquad 1< q<1+\frac{r-n}{nr}\;.
\end{flalign}
Then $u$ il locally Lipschitz regular in $\Omega$. 
\end{theorem}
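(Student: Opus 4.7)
The argument naturally splits into four stages: regularization, a priori Caccioppoli, a Moser-type iteration tuned to the $(p,q)$-gap, and a limit passage that exploits the absence of the Lavrentiev phenomenon.

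The starting point is that the integrand in \trif{switch} has nearly linear lower growth on the set $\{a(x)=0\}$ and the coefficient $a$ is only $W^{1,r}$-regular, so one cannot differentiate the Euler-Lagrange system of $u$ directly. I would therefore introduce a regularized family, e.g.\
\[
F_\varepsilon(x,z) := |z|\log(1+|z|) + \varepsilon(1+|z|^2)^{q/2} + a_\varepsilon(x)(1+|z|^2)^{q/2},
\]
where $a_\varepsilon$ is a smooth mollification of $a$, and let $v_\varepsilon$ be the minimizer of the associated functional on a ball $\brx\Subset \Omega$ with boundary datum $u$. For fixed $\varepsilon>0$ the integrand has standard (uniform) $q$-growth, is smooth in $x$, and thus $v_\varepsilon$ enjoys enough second-order regularity to differentiate its Euler-Lagrange equation.

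Next I would derive a Caccioppoli inequality of the second kind for $v_\varepsilon$, differentiating the equation in direction $x_s$ and testing with $\eta^2 (G_\varepsilon - \kappa)_+^\alpha \partial_s v_\varepsilon$ for a cutoff $\eta$, an exponent $\alpha \geq 0$, and a composite modulus $G_\varepsilon$ mixing $|Dv_\varepsilon|\log(1+|Dv_\varepsilon|)$ with powers of $(1+|Dv_\varepsilon|^2)^{1/2}$. The only genuinely non-autonomous contribution is the term $|Da_\varepsilon|(1+|Dv_\varepsilon|^2)^{(q-1)/2}|D^2 v_\varepsilon|$, which by Young's inequality I would split into an absorbable part (into the positive Hessian term controlled by the $q$-phase) and a forcing part involving $|Da_\varepsilon|^2 (1+|Dv_\varepsilon|^2)^{q-1}$, which is then integrated by H\"older against the $L^r$ norm of $Da$.

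The core of the proof, following the Moser variant announced in the abstract, is then to iterate the resulting reverse-H\"older inequality on level sets of $(1+|Dv_\varepsilon|)$. Combining Sobolev embedding with the H\"older exponent $r$ produces a self-improving inequality of the form
\[
\|1+|Dv_\varepsilon|\|_{L^{\chi \beta}(B_{\varrho})} \leq C\bigl(1 + \|Da\|_{L^r}\bigr)^{\theta}\|1+|Dv_\varepsilon|\|_{L^{\beta}(B_{R})}^{\gamma},
\]
for a fixed gain factor $\chi>1$, valid provided $(q-1)<\frac{1}{n}-\frac{1}{r}=\frac{r-n}{nr}$: this is exactly the gap in \rif{pqsample}, and it is what makes both the exponent balance in the iteration close and the corresponding geometric series convergent. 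Iterating and summing up one obtains $\|Dv_\varepsilon\|_{L^\infty(B_{\varrho/2})}\leq C$ uniformly in $\varepsilon$. I expect the bookkeeping of the exponents here, together with locating the cleanest $G_\varepsilon$ that reconciles the $|z|\log(1+|z|)$-phase with the $q$-phase without losing the crucial $\chi>1$ gain, to be the main technical obstacle, since the nearly linear lower growth leaves almost no slack on the left-hand side.

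To conclude, I would pass to the limit $\varepsilon\to 0$. The gap condition in \rif{pqsample} is precisely the one guaranteeing that no Lavrentiev phenomenon occurs for functionals of the type \trif{switch}, so that smooth maps are energy-dense and $v_\varepsilon\to u$ in energy. The uniform Lipschitz bound then transfers to $u$ via lower semicontinuity of $\|D\cdot\|_{L^\infty}$, and a standard covering argument yields $u\in W^{1,\infty}_{\loc}(\Omega)$.
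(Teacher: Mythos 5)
Your high-level architecture --- regularize, derive a Caccioppoli inequality by differentiating the Euler-Lagrange equation, run a Moser-type iteration calibrated to the $(p,q)$-gap, pass to the limit --- is indeed the skeleton of the paper's argument, which derives Theorem \ref{sample1} as a corollary of the Lipschitz result Theorem \ref{main3} via the approximation scheme of Sections \ref{apri}--\ref{chiudi}. However, there are two genuine gaps in the way you set up the approximation and close the argument.

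First, you propose to take $v_\varepsilon$ as the minimizer of the $\varepsilon$-regularized, mollified functional over $u + W^{1,q}_0(B_r)$, i.e.\ with $u$ itself as boundary datum. But the integrand in \trif{switch} has only nearly-linear lower growth, so the local minimizer $u$ is a priori only in $W^{1,1}_{\loc}$, and the class $u+W^{1,q}_0(B_r)\cap W^{1,q}(B_r)$ may well be empty: the Dirichlet problem you want to solve need not be well posed. The paper's fix is to invoke the vanishing of the Lavrentiev gap \emph{at the very start}: Proposition \ref{approssi1} produces $\tilde{u}_j\in W^{1,q}(B_R)$ with $\tilde u_j\rightharpoonup u$ in $W^{1,1}$ and $\mathcal{F}(\tilde{u}_j,B_R)\to\mathcal{F}(u,B_R)$, and these $\tilde u_j$ (not $u$) serve as boundary data for the regularized problems \trif{pdmu}. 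The $\varepsilon_j$-regularizing weight \trif{74} is then carefully balanced against $\|D\tilde u_j\|_{L^q}$ so that the extra energy it contributes vanishes. Without this two-parameter scheme ($j$ and $\delta$, passed to the limit in the right order in Section \ref{chiudi}), there is no way to guarantee that the regularized minimizers converge back to $u$ in energy; in fact that convergence is exactly what is delicate in the presence of a possible Lavrentiev phenomenon, and you cannot postpone dealing with it to the final step.

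Second, you assert that ``the gap condition in \trif{pqsample} is precisely the one guaranteeing that no Lavrentiev phenomenon occurs for functionals of the type \trif{switch}'' without proof. This is itself a substantive lemma which the paper proves separately in Section \ref{primot}: one mollifies $u$, uses the H\"older continuity $a\in C^{0,\alpha}$ with $\alpha=1-n/r$ implied by Sobolev embedding together with the standard bound $|Du_\varepsilon|\lesssim\varepsilon^{-n}$, and then exploits $q<1+\alpha/n$ (i.e.\ \trif{boundy}) plus Jensen's inequality to show $F(x,Du_\varepsilon)\lesssim [F(\cdot,Du)\ast\phi_\varepsilon](x)+1$, whence energy convergence by dominated convergence. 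Without this verification the condition $\mathcal L^q(u,B)=0$ that drives the whole approximation scheme is unjustified.

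Beyond these two gaps, the details you leave open are real but in the expected direction: the paper encodes the nearly-linear growth via the $\mu$-ellipticity framework of \trif{assF2} (here $\mu=1$, $\lambda=1$, $\bar F(t)=t\log(1+t)$), tests the differentiated equation with $\eta^2[H_\delta(Du_{j,\delta})]^\gamma D_s u_{j,\delta}$ rather than a level-set truncation, and the Moser iteration in Section \ref{moser1} closes precisely because the exponent $\theta$ in \trif{iltheta} is below $1$ under \trif{pq}, which for $\mu=1$ reduces to your $q<1+\frac{r-n}{nr}$.
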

The result for the autonomous case $a(\cdot)=0$
has been established in \cite{MS}; see also \cite{Bi, FM, M3}. Theorem \ref{sample1} is a particular case of Theorem \ref{main3} below and Section  \ref{primot} for the proof of Theorem \ref{sample1}. 
Let us explain why assumptions \rif{pqsample} are in a sense sharp. The functional \rif{switch} can be seen as the limit case of the one in \rif{duefasi} when $p\to 1$. For the functional in \rif{duefasi} the local Lipschitz continuity of minima is guaranteed by the assumption  
\begin{flalign}\label{pqsample2}
0 \leq a(\cdot) \in C^{0, \alpha}_{\loc}(\Omega)\qquad \mbox{and}\qquad q\leq p+\frac{p\alpha}{n}\;,
\end{flalign}
which is optimal by \cite{ELM2, FMM}. See \cite{BCM3, CM2} for regularity results, instead. Sobolev embedding gives that $ a(\cdot) \in W^{1,r}_{\loc}$ implies $a(\cdot) \in C^{0, \alpha}_{\loc}$, where $\alpha = 1-n/r$. In turn, substituting this value of $\alpha$ in \rif{pqsample2} and taking $p=1$, makes \rif{pqsample} and \rif{pqsample2} coincide (apart from the equality case in \rif{pqsample2}, due to the peculiar structure in \rif{duefasi}). Assumption \rif{pqsample} describes the catch between $p,q$ and the H\"older continuity exponent $\alpha$ as in \rif{pqsample2}, but in a weakly differentiable version. This approach has been introduced in the interesting papers \cite{EMM, EMM2}, where Moser's iteration has been employed;  previous results involving Sobolev coefficients appear in \cite{KM3}. One our goals here is to describe a variant of Moser's iteration, that, in a sense, allows to treat non-uniformly elliptic equations as uniformly elliptic ones. See Section \ref{moser2} below.

A second result of this paper deals with the higher integrability of minima in the general vectorial case, and avoids considering differentiability assumptions on coefficients. For general non-autonomous convex functionals \rif{genF} with $(p,q)$-growth as in \rif{cond-p}, the assumption of (uniform) $\alpha$-H\"older continuity of the partial map
\eqn{holdy}
$$
x \mapsto \frac{\partial_z F(x,z)}{1 +|z|^{q-1}}
$$
guarantees that any local minimizer, which is by \rif{cond-p} only in $W^{1,p}_{\loc}$, actually belongs to the  smaller space $W^{1,q}_{\loc}$, provided $q/p<1+\alpha/n$ and the Lavrentiev phenomenon does not appear \cite{sharp} (see  \rif{lav1} below). When applied to the functional in  \rif{duefasi}, condition \rif{holdy} amounts to require that \rif{pqsample2} is satisfied. On the other hand, as seen in \cite{BCM3, CM2, CM3} for the specific functional in \rif{pqsample2}, considering bounded minimizers allows to improve the bound in \rif{pqsample2}. More precisely, condition \rif{pqsample2} can be replaced by 
\begin{flalign}\label{pqsample3}
0 \leq a(\cdot) \in C^{0, \alpha}_{\loc}(\Omega)\,,  u \in L^{\infty}_{\loc}(\Omega) \qquad \mbox{and}\qquad q\leq p+\alpha\;.
\end{flalign}
This is again sharp \cite{sharp, FMM}. 
Here we see that conditions as in \rif{pqsample3} actually work for general functionals as in \rif{genF} and imply higher gradient integrability of minima; see Theorem \ref{main1} below. For autonomous functionals $w   \mapsto \int_{\Omega} F(Dw)  \dx$ the interaction between boundedness of minima and dimensionless bounds has been considered in \cite{CKP, choe, ELM0}. 

%%%bozze brevi cambiamenti
\subsection{The Lavrentiev gap} In this section, unless otherwise specified, we deal with a functional as in \rif{genF}, where $z\mapsto  F(x, z)$ is convex (for a.e.~$x \in \Omega$) and with the additional lower bound $\bar{F}(\snr{z})\lesssim F(x,z)$, where $ \bar{F}\colon [0, \infty)\to [0, \infty)$ has superlinear growth in the sense of \rif{assF3}$_2$ below. In this situation the so-called Lavrentiev phenomenon might appear. For instance, under $(p,q)$-growth conditions \rif{cond-p}, there might occur an inequality of the type 
\begin{equation}\label{Lav2}
\inf_{w\in u_0+W^{1,p}_0(\Omega,\er^N)} \int_{\Omega} F(x,Dw) \ dx < \inf_{w\in u_0+W^{1,q}_0(\Omega,\er^N)}
\int_{\Omega} F(x,Dw) \ dx\;,
\end{equation}
for a suitable (even smooth) boundary datum  $u_0$. 
In other words, it is not possible to achieve the minimum of the functional via more regular maps, although these are dense. This is a tautological obstruction to regularity of minima, and indeed several counterexamples in regularity are based on the occurrence of \rif{Lav2} \cite{sharp, FMM, Z2, Z3}. In this paper we further develop the approach of \cite{sharp}, proving regularity via a suitable analysis of Lavrentiev phenomenon. This goes as follows. First observe that the convexity of $z \to F(\cdot, z)$ guarantees lower semicontinuity, in the sense that 
%%%bozze c'era un punto alla fine
\eqn{lower}
$$
\mathcal F (u, \Omega) \leq \liminf_{j} \mathcal F(u_j, \Omega)
$$
holds for all $\{u_j\}\subset W^{1,1}(\Omega, \er^N)$ such that $u_j \rightharpoonup u$ weakly in  
$W^{1,1}(\Omega,\Bbb R^{N})$. As in \cite{ABF} for $q\geq 1$, we define, whenever $B \Subset \Omega$ is a ball, the relaxed functional 
$$
\mathcal{F}^{q}(u,B)  \, := \inf_{\{u_j\}\subset W^{1,q}(B, \er^N)}  \left\{ \liminf_j  \mathcal{F}(u_j,B)  \, \colon \,  u_j \to u\  \mbox{in} \ L^{1}(B,\er^N)  \right\} 
$$
for every $u\in W^{1,1}(B, \er^N)$. Accordingly, as in \cite{sharp} we consider the Lavrentiev gap
\eqn{lav1}
$$
\mathcal {L}^{q}(u,B):= \mathcal{F}^{q}(u,B)- \mathcal{F}(u,B)\;.
$$
We refer to \cite{ABF} for a related and extended definition, allowing to show that, in certain cases, $\mathcal {L}^{q}(u,B)$ is a measure which is singular with respect to the Lebesgue measure. By \rif{lower} and \rif{assF3}$_2$ it is $\mathcal{F}^{1}\equiv \mathcal{F}$ and moreover $1\leq q_1\leq q_2$ implies $\mathcal{F}^{q_1}\leq \mathcal{F}^{q_2}$. In the case a lower bound $|z|^p\lesssim F(x, z)$ for $p\geq 1$ is satisfied, it holds that 
$\mathcal{F}^{p}\equiv \mathcal{F}$. 
Examples for which $\mathcal {L}^{q}(\cdot,B)\not=0$ occur \cite{ABF, sharp, Z2, Z3}, and this in fact relates to \rif{Lav2} and to the approximation in energy in the following sense:
\begin{proposition} \label{approssi1}
 Let $u\in W^{1,1}(B,\Bbb R^{N})$ be a function such that
$\mathcal{F}(u,B)<\infty$, where $B \subset \Omega$ is a fixed ball. Then $\mathcal {L}^q(u,B)=0$
 iff there
exists a sequence $\{u_{j}\} \subset
W^{1,q}(B,\Bbb R^{N})$
such that $u_{j} \rightharpoonup u$ weakly in  
$W^{1,1}(B,\Bbb R^{N})$ and $\mathcal{F}(u_{j},B) \to
\mathcal{F}(u,B).$
\end{proposition}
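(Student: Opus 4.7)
The plan is to treat the two implications separately, with the forward direction being essentially tautological and the reverse requiring a diagonal extraction plus an equi-integrability argument to upgrade $L^1$-convergence to weak $W^{1,1}$-convergence.

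First I would dispose of the easy direction: suppose a sequence $\{u_j\}\subset W^{1,q}(B,\erN)$ exists with $u_j\rightharpoonup u$ in $W^{1,1}(B,\erN)$ and $\mathcal{F}(u_j,B)\to\mathcal{F}(u,B)$. Weak $W^{1,1}$-convergence gives $u_j\to u$ in $L^1(B,\erN)$ by Rellich, so $\{u_j\}$ is admissible in the definition of $\mathcal{F}^q(u,B)$, which yields $\mathcal{F}^q(u,B)\leq \liminf_j\mathcal{F}(u_j,B)=\mathcal{F}(u,B)$. The reverse inequality $\mathcal{F}(u,B)=\mathcal{F}^1(u,B)\leq\mathcal{F}^q(u,B)$ follows from $1\leq q$ and the chain of monotonicity noted after \rif{lav1}, so $\mathcal{L}^q(u,B)=0$.

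For the converse, assume $\mathcal{F}^q(u,B)=\mathcal{F}(u,B)<\infty$. By definition of infimum, for every $k\in\en$ I pick a sequence $\{u_{j}^{(k)}\}_{j}\subset W^{1,q}(B,\erN)$ with $u_{j}^{(k)}\to u$ in $L^{1}(B,\erN)$ as $j\to\infty$ and
$$
\liminf_{j}\mathcal{F}(u_{j}^{(k)},B)\leq \mathcal{F}(u,B)+\frac{1}{k}.
$$
A standard diagonal extraction produces indices $j(k)$ such that, setting $v_k:=u_{j(k)}^{(k)}$, one has $\|v_k-u\|_{L^1(B)}\leq 1/k$ and $\mathcal{F}(v_k,B)\leq \mathcal{F}(u,B)+2/k$. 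In particular $\limsup_k \mathcal{F}(v_k,B)\leq \mathcal{F}(u,B)$ and $v_k\to u$ in $L^1(B,\erN)$.

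The main obstacle is to upgrade this $L^1$-convergence to weak $W^{1,1}$-convergence so that the lower semicontinuity \rif{lower} applies. Here I invoke the superlinearity hypothesis $\bar F(|z|)\lesssim F(x,z)$ stipulated at the start of the subsection: the uniform bound $\mathcal{F}(v_k,B)\leq \mathcal{F}(u,B)+2$ combined with $\bar F$ superlinear at infinity gives, via de la Vallée Poussin, equi-integrability of $\{|Dv_k|\}$ in $L^1(B)$. Dunford--Pettis then provides weak $L^1$-compactness of $\{Dv_k\}$; joined with the $L^1$-convergence $v_k\to u$, this identifies the weak limit and yields $v_k\rightharpoonup u$ in $W^{1,1}(B,\erN)$. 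Applying the lower semicontinuity \rif{lower} gives $\mathcal{F}(u,B)\leq \liminf_k \mathcal{F}(v_k,B)$, which together with the $\limsup$ bound forces $\mathcal{F}(v_k,B)\to\mathcal{F}(u,B)$, completing the proof.
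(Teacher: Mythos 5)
Your proof is correct and follows exactly the route the paper sketches in one line after the proposition: the forward implication is immediate from the definitions together with the monotonicity $\mathcal{F}=\mathcal{F}^1\le\mathcal{F}^q$, while the reverse implication uses the superlinear lower bound $\bar F(|z|)\lesssim F(x,z)$ together with Dunford--Pettis to upgrade the $L^1$-convergence of a diagonally extracted near-minimizing sequence to weak $W^{1,1}$-convergence, after which the lower semicontinuity \eqref{lower} closes the argument. You have simply filled in the details that the paper leaves implicit.
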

The proof is a straightforward consequence of the definitions and of the fact that the lower bound $\bar{F}(|z|)\lesssim F(x,z)$ allows to consider weakly convergent sequences via Dunford-Pettis criterion. In this paper we prove that regularity of local minimizers $u$ holds provided a suitable Lavrentiev gap vanishes on $u$, a condition, that, in a sense, is tautologically necessary for regularity. A main point here is that, in fact, in several examples, the assumptions guaranteeing that the Lavrentiev gap vanishes 
are the same allowing for a priori estimates, thereby closing the circle. See also Section \ref{absence} below. Notice that this is the case when no $x$-dependence is allowed: plain convexity of $z \mapsto F(z)$ suffices. A most interesting example is given by the double phase functional \rif{duefasi}, where conditions for regularity \rif{pqsample2} allow to prove that the gap vanishes \cite{sharp}. As anticipated in the previous section, there is an interplay between bounds on the gap $q/p$ and a priori boundedness of minima. An instance is given by the following fact from \cite{BCM3}:
\begin{theorem}\label{bcmtheorem} Let $ u \in W^{1,1}_{\loc}(\Omega,\er^N)$ be a local minimizer of 
the functional $\mathcal F$ in \trif{genF} satisfying 
\eqn{doppioc}
$$
|z|^p + a(x)|z|^q  \lesssim F(x,z) \lesssim |z|^p + a(x)|z|^q +1\;,
$$
with \trif{pqsample2} being in force. 
Then, for every ball
 $B\Subset  \Omega$ there exists a sequence $\{u_j\} $ of $W^{1,\infty}(B,\er^N)$-regular functions such that $u_j \to u$ strongly in $W^{1,p}(B,\er^N)$ and in $L^{\infty}(B,\er^N)$,
and  $
 \mathcal F (u_j, B) \to \mathcal F (u, B). 
 $
\end{theorem}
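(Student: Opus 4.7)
My strategy is a direct mollification of $u$. Fix a ball $B \Subset B' \Subset \Omega$, let $\rho_\varepsilon$ be a standard symmetric mollifier of scale $\varepsilon$, and for $\varepsilon < \dist(B, \partial B')$ define $u_\varepsilon := u \ast \rho_\varepsilon$ on $B$. Each $u_\varepsilon \in C^\infty(\overline{B}) \subset W^{1,\infty}(B, \er^N)$, so choosing $u_j := u_{\varepsilon_j}$ along any vanishing sequence $\varepsilon_j \to 0^+$ produces the $W^{1,\infty}$-regular approximants required by the statement. The lower bound $|z|^p \lesssim F(x,z)$ in \rif{doppioc} together with $\mathcal F(u, B') < \infty$ gives $u \in W^{1,p}_{\loc}(\Omega, \er^N)$, so $u_j \to u$ strongly in $W^{1,p}(B, \er^N)$ by the classical mollifier theorem. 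Since \rif{pqsample2} forces local continuity (indeed $C^{1,\beta}$-regularity) of $u$ through the results of \cite{BCM3, CM2}, the mollifications converge uniformly on $B$, yielding also $u_j \to u$ in $L^\infty(B, \er^N)$.

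The delicate step is the energy convergence $\mathcal F(u_j, B) \to \mathcal F(u, B)$. Convexity of $z \mapsto F(x, z)$ combined with \rif{lower} gives lower semicontinuity $\mathcal F(u, B) \leq \liminf_j \mathcal F(u_j, B)$, so it suffices to prove the matching $\limsup_j \mathcal F(u_j, B) \leq \mathcal F(u, B)$. By the upper bound in \rif{doppioc} this reduces to controlling $\int_B a(x) |Du_\varepsilon(x)|^q \dx$ from above, the contribution of $|Du_\varepsilon|^p$ being handled by $W^{1,p}$-convergence together with Carath\'eodory continuity of $F$. I would apply Jensen's inequality, giving $|Du_\varepsilon(x)|^q \leq (|Du|^q \ast \rho_\varepsilon)(x)$, swap the order of integration by symmetry of $\rho_\varepsilon$, and invoke the H\"older bound $a(x) \leq a(y) + [a]_{0,\alpha}\varepsilon^\alpha$ for $|x-y|\leq \varepsilon$, to obtain an estimate of the shape
$$
\int_B a(x)\,|Du_\varepsilon|^q \dx \,\leq\, \int_{B'} a(y)\,|Du(y)|^q \dy \,+\, C[a]_{0,\alpha}\varepsilon^\alpha \int_{B'} |Du(y)|^q \dy\;.
$$

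The main obstacle is the finiteness of the correction term $\varepsilon^\alpha \int_{B'}|Du|^q \dy$, which is not immediate from $\mathcal F(u, B') < \infty$ alone (a priori one only controls $\int a|Du|^q$, not $\int|Du|^q$ on $\{a(x)=0\}$). This is where \rif{pqsample2} plays its crucial role: it activates the higher-integrability theorem $Du \in L^q_{\loc}(\Omega, \er^{N\times n})$ established in \cite{CM2, BCM3} (itself the technical core of the double-phase theory, based on a careful iteration of a priori estimates for regularized functionals), by which the correction is $o(1)$ as $\varepsilon \to 0$. Combined with the lower semicontinuity bound this yields energy convergence, completing the proof.
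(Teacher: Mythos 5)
The paper itself does not prove Theorem~\ref{bcmtheorem} — it cites \cite{BCM3} — but a sibling argument (for $p=1$) appears in Section~\ref{primot}, and comparing against that makes the gap in your proposal visible.

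Your mollification strategy is the right starting point, but the Jensen step is aimed at the wrong target. Estimating $|Du_\eps|^q \leq (|Du|^q * \rho_\eps)$ and then splitting $a*\rho_\eps \leq a + [a]_{0,\alpha}\eps^\alpha$ produces the correction term $\eps^\alpha\int_{B'}|Du|^q\,dy$, and you correctly notice that its finiteness is not known from $\mathcal F(u,B')<\infty$ alone (on $\{a=0\}$ you only control $\int|Du|^p$). Your fix is to invoke $Du\in L^q_{\loc}$ from the higher-integrability theory of \cite{BCM3,CM2}, but this is circular: in those papers the higher-integrability is deduced \emph{from} the very approximation-in-energy lemma you are trying to prove. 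Likewise, appealing to local $C^{1,\beta}$-regularity of $u$ to get the $L^\infty$-convergence is circular in the same way (and in the genuinely vectorial setting $N>1$ with no Uhlenbeck structure it is not even the right qualitative statement). In short, you need to prove the lemma without assuming the regularity that it is designed to unlock.

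The way out, and the argument actually used in \cite{BCM3} and echoed for $p=1$ in Section~\ref{primot} of this paper, is to apply Jensen not to $|z|^q$ but to the \emph{frozen} integrand $F_\eps(x,z):=|z|^p+\ai(B_{2\eps}(x))|z|^q$, where $\ai(B_{2\eps}(x)):=\inf_{B_{2\eps}(x)}a$. Because $\ai(B_{2\eps}(x))\leq a(y)$ for every $y\in B_\eps(x)$, Jensen gives
$F_\eps(x,Du_\eps(x))\leq (F_\eps(x,Du(\cdot))*\rho_\eps)(x)\lesssim (F(\cdot,Du)*\rho_\eps)(x)$,
a term whose integral is controlled by $\mathcal F(u,B')$ alone. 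The remaining error $[a(x)-\ai(B_{2\eps}(x))]|Du_\eps(x)|^q\lesssim [a]_{0,\alpha}\eps^\alpha|Du_\eps(x)|^q$ is then killed not by $L^q$-integrability of $Du$ but by the crude pointwise bound $|Du_\eps(x)|\lesssim \eps^{-n/p}\|Du\|_{L^p}$ from $u\in W^{1,p}$: writing $|Du_\eps|^q=|Du_\eps|^{q-p}|Du_\eps|^p$, one picks up $\eps^{\alpha-n(q-p)/p}|Du_\eps|^p$, which stays bounded precisely when $q\leq p+p\alpha/n$, i.e.\ \rif{pqsample2}. This is exactly where the gap condition enters and why it is sharp; without the frozen integrand and the crude bound, the argument cannot close. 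The $L^\infty$-convergence is then handled by first establishing local boundedness and continuity of $u$ through methods (De~Giorgi/Moser truncation) that do not pass through the approximation lemma.
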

Notably, in Theorem \ref{bcmtheorem} no convexity of $z \mapsto F(\cdot, z)$ is assumed, i.e., the double-sided control in \rif{doppioc} suffices. 
Theorem \ref{bcmtheorem} leads to define a different relaxation of the functional in \rif{genF}; specifically, we have for every $u \in W^{1,1}(B, \er^n)\cap L^{\infty}(B,\er^N)$
 \label{rilassato1}
$$
\mathcal{F}_{b}^{q}(u,B)  \, := \inf_{\{u_j\}\subset W^{1,q}(B, \er^n)\cap L^{\infty}(B,\er^N)} \left\{ \liminf_j \mathcal{F}(u_j,B)  \, \colon \,  u_j \to u\  \mbox{in} \ L^{\infty}(B,\er^N)  \right\}
$$
and, finally
\eqn{lav2}
$$
\mathcal {L}_b^{q}(u,B):= \mathcal{F}_{b}^{q}(u,B)- \mathcal{F}(u,B)\;.
$$
Similarly to Proposition \ref{approssi1}, we have 
\begin{proposition} \label{approssi2}
 Let $u\in W^{1,1}(B,\Bbb R^{N})\cap L^{\infty}(B, \er^N)$ be a function such that
$\mathcal{F}(u,B)<\infty$, where $B \subset \Omega$ is a fixed ball. Then $\mathcal {L}_b^{q}(u,B)=0$
 iff there
exists a sequence $\{u_{j}\} \subset
W^{1,q}(B,\Bbb R^{N})\cap L^{\infty}(B,\er^N)$
such that $u_{j} \rightharpoonup u$ weakly in  
$W^{1,1}(B,\Bbb R^{N})$, $\|u_j-u\|_{L^{\infty}(B,\er^N)}\to 0$ and $\mathcal{F}(u_{j},B) \to
\mathcal{F}(u,B).$
\end{proposition}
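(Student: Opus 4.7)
The plan is to mimic the proof of Proposition \ref{approssi1}, upgrading $L^1$-convergence to $L^\infty$-convergence throughout, and using the superlinear lower bound $\bar{F}(|z|) \lesssim F(x,z)$ to pass from energy bounds to weak $W^{1,1}$ compactness via the De la Vall\'ee Poussin/Dunford--Pettis criterion. The two directions correspond to unpacking the definition of $\mathcal{F}_b^q$ and then combining with the lower semicontinuity in \eqref{lower}.

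For the sufficiency ($\Leftarrow$), suppose such a sequence $\{u_j\} \subset W^{1,q}(B,\er^N) \cap L^\infty(B, \er^N)$ exists. Since $\|u_j - u\|_{L^\infty} \to 0$ implies $u_j \to u$ in $L^\infty(B, \er^N)$, the sequence is admissible in the definition of $\mathcal{F}_b^q(u,B)$, so
$$
\mathcal{F}_b^q(u,B) \leq \liminf_j \mathcal{F}(u_j, B) = \mathcal{F}(u, B).
$$
The opposite inequality $\mathcal{F}(u, B) \leq \mathcal{F}_b^q(u,B)$ follows from the lower semicontinuity \eqref{lower}: for any admissible sequence in the definition of $\mathcal{F}_b^q$ with bounded liminf of energies, the superlinear lower bound $\bar F(|z|)\lesssim F(x,z)$ together with Dunford--Pettis yields equi-integrability of $\{Du_j\}$, hence weak $W^{1,1}$ compactness; the $L^\infty$-convergence to $u$ identifies the weak limit, and \eqref{lower} applies.

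For the necessity ($\Rightarrow$), assume $\mathcal{L}_b^q(u,B) = 0$, i.e.\ $\mathcal{F}_b^q(u,B) = \mathcal{F}(u,B)$. By the definition of $\mathcal{F}_b^q$ as an infimum, for every $k \in \en$ I can find a sequence $\{u_{j,k}\} \subset W^{1,q}(B,\er^N)\cap L^\infty(B,\er^N)$ with $u_{j,k} \to u$ in $L^\infty(B,\er^N)$ as $j \to \infty$ and
$$
\liminf_{j} \mathcal{F}(u_{j,k}, B) \leq \mathcal{F}_b^q(u,B) + \tfrac{1}{k} = \mathcal{F}(u,B) + \tfrac{1}{k}.
$$
A standard diagonal extraction then produces a single sequence $\{u_j\} \subset W^{1,q}(B,\er^N) \cap L^\infty(B,\er^N)$ satisfying $\|u_j - u\|_{L^\infty(B,\er^N)} \to 0$ and $\limsup_j \mathcal{F}(u_j, B) \leq \mathcal{F}(u,B)$. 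Combining this with the lower semicontinuity bound $\mathcal{F}(u, B) \leq \liminf_j \mathcal{F}(u_j, B)$, applied after extracting weak $W^{1,1}$ limits as in the sufficiency step, yields $\mathcal{F}(u_j, B) \to \mathcal{F}(u,B)$, as well as $u_j \rightharpoonup u$ weakly in $W^{1,1}(B,\er^N)$.

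The only delicate point is ensuring weak $W^{1,1}$ compactness along the approximating sequence, since $L^\infty$-convergence alone does not control derivatives. This is precisely where the superlinear lower bound plays its role via the De la Vall\'ee Poussin criterion, exactly as in the proof of Proposition \ref{approssi1}; the identification of the weak limit as $u$ is then guaranteed by the stronger $L^\infty$-convergence built into the definition of $\mathcal{F}_b^q$. All remaining manipulations are essentially bookkeeping at the level of infima and liminfs.
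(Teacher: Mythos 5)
Your proposal is correct and matches the paper's (implicit) argument: the paper states only that the proof follows from the definitions together with the superlinear lower bound $\bar F(|z|)\lesssim F(x,z)$, which gives weak $W^{1,1}$-compactness via Dunford--Pettis, and this is exactly what you carry out. The diagonal extraction in the necessity direction and the use of $L^\infty$-convergence to identify the weak $W^{1,1}$-limit are precisely the bookkeeping steps the paper leaves to the reader.
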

%%%fino a qui
\subsection{Regularity via Lavrentiev gap}\label{lav-sec}
We consider in integrand $F \colon \Omega \times \er^{N\times n}\to [0, \infty)$ such that $z \to F(\cdot,z)$ is locally $C^{1}$-regular and satisfies 
\eqn{assF}
$$
\left\{
\begin{array}{c}
\nu |z|^{p}\leq F(x,z) \leq L(1+ |z|^{q})\\ [6 pt]
\nu\left(\lambda^{2}+ |z_{1}|^{2} +
  |z_{2}|^{2}\right)^{\frac{p-2}{2}} |z_{1}-z_{2}|^{2} \leq
\left( \partial_z F(x,z_{1})-\partial_z F(x,z_{2})\right)\cdot\left(z_{1}-z_{2}\right) \\ [6 pt]
|\partial_z F(x,z)-\partial_z F(y,z)|
\leq L |x-y|^{\alpha}(1+|z|^{q-1})\;,
 \end{array}\right.
 $$
 %%%bozze with respect to the gradient variable
whenever $x,y\in \Omega$, $z, z_1, z_2 \in \er^{N\times n}$, where $1<p\leq q $, $\lambda \in [0,1]$, $\alpha \in (0,1]$ and $0< \nu \leq 1 \leq L$ are fixed constants. Notice that $F(\cdot)$ is not assumed to twice differentiable here with respect to the gradient variable; in particular, no growth assumption on second derivatives of $F(\cdot)$ is considered here. The monotonicity inequality in \rif{assF}$_2$ implies that $z \to F(\cdot,z)$ is convex. In turn, this and \rif{assF}$_1$ imply that 
%controllare
\eqn{crescitader}
$$
 \left| \partial_z F(x,z)\right| \leq c(1+ |z|^{2})^{\frac{q-1}{2}}
$$
%%%bozze c\equiv c (L,p,q)
holds too, for every $z \in \er^{N\times n}$ and $x\in \Omega$, where $c\equiv c (L,q)$. 
\begin{theorem}\label{main1}
Let $u\in W^{1,1}_{\loc}(\Omega,\RN)\cap L^{\infty}_{\loc}(\Omega,\RN)$ be a local minimizer of the functional $\mathcal F$ in \eqref{genF} under assumptions \trif{assF}, with 
\eqn{mainbound}
$$1  <p < q < p+\alpha\min\left\{1, \frac{p}{2}\right\}\;.$$ 
Assume that 
\eqn{zerogap}
$$\mathcal{L}_{b}^{q}(u,B_R)=0$$ holds for a ball $B_R\Subset \Omega$ with $R\leq 1$. If $\tilde{p}$ is such that
\eqn{ptilde}
$$
q< \tilde{p}<p+\alpha\min\left\{1, \frac{p}{2}\right\}
$$
and $B_{\rr}\Subset B_{R}$ is ball concentric to $B_R$, then
\eqn{main-ine}
$$
\nr{Du}_{L^{\tilde{p}}(B_{\rr})}\le \frac{c}{(R-\rr)^{\kappa_{1}}}\left[1+\mathcal{F}(u,B_{R})\right]^{\kappa_{2}}
$$
holds for a constant $c$ depending on $n,N,\nu,L,p,q,\alpha,\tilde p, \nr{u}_{L^{\infty}(B_{R})}$, and exponents $\kappa_{1}, \kappa_{2}\equiv \kappa_{1}, \kappa_{2}(n,p,q,\alpha, \tilde p)$. In particular, if \trif{zerogap} holds for every such ball $B_R \Subset \Omega$, then $u\in W^{1,q}_{\loc}(\Omega,\RN)$. 
\end{theorem}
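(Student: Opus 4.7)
\medskip

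\noindent\textbf{Proposal for the proof of Theorem \ref{main1}.}

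The plan is a standard but technically involved approximation/a priori estimate scheme, where the assumption \eqref{zerogap} serves precisely to justify the approximation step, and the boundedness of $u$ together with \eqref{mainbound} is used to sharpen the Caccioppoli inequality.

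\emph{Step 1: Approximation.} Since $\mathcal L_b^q(u,B_R)=0$, Proposition \ref{approssi2} supplies a sequence $\{u_j\}\subset W^{1,q}(B_R,\erN)\cap L^{\infty}(B_R,\erN)$ with $u_j\rightharpoonup u$ weakly in $W^{1,1}(B_R,\erN)$, $\|u_j-u\|_{L^{\infty}(B_R,\erN)}\to 0$, and $\mathcal F(u_j,B_R)\to\mathcal F(u,B_R)$. In particular $\|u_j\|_{L^{\infty}(B_R,\erN)}$ is uniformly bounded by a constant depending on $\|u\|_{L^{\infty}(B_R,\erN)}$.

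\emph{Step 2: Regularized problem.} Fix $j$ and $\eps\in(0,1)$ and consider the regularized integrand $F_{\eps}(x,z):=F(x,z)+\eps(1+|z|^2)^{\tilde p/2}$. The functional $\mathcal F_{\eps}(\cdot,B_R)$ is strictly convex with $\tilde p$-growth, hence the Dirichlet problem with boundary value $u_j$ admits a unique minimizer $v_{j,\eps}\in u_j+W^{1,\tilde p}_0(B_R,\erN)$. Classical regularity theory under standard growth (or a further inner mollification of $F_\eps$ in the $x$-variable, then removing that extra approximation) provides the regularity needed to perform difference-quotient computations and to test the Euler--Lagrange system with $W^{1,\infty}$-truncations. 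The minimality of $v_{j,\eps}$ against $u_j$, together with the growth \eqref{assF}$_1$, gives
$$
\mathcal F(v_{j,\eps},B_R)+\eps\int_{B_R}(1+|Dv_{j,\eps}|^2)^{\tilde p/2}\,\dx\le \mathcal F(u_j,B_R)+\eps\int_{B_R}(1+|Du_j|^2)^{\tilde p/2}\,\dx,
$$
and a maximum principle type argument (testing against $v_{j,\eps}-\max\{\min\{v_{j,\eps},M\},-M\}$ with $M=\|u_j\|_{L^{\infty}(B_R)}$) yields $\|v_{j,\eps}\|_{L^{\infty}(B_R,\erN)}\le \|u_j\|_{L^{\infty}(B_R,\erN)}$ at least after a standard componentwise device; if the vectorial $L^\infty$ truncation causes issues, I would instead penalize with a term $\kappa(|v|-M)_+^{\tilde p}$ and let $\kappa\to\infty$.

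\emph{Step 3: The key a priori estimate.} This is the heart of the argument and the expected main obstacle. For concentric balls $B_{\rr}\subset B_{\tau_1}\subset B_{\tau_2}\subset B_R$ and a cut-off $\eta\in C^\infty_c(B_{\tau_2})$ with $\eta\equiv 1$ on $B_{\tau_1}$, I would combine two ingredients: (i) a finite-difference/Nirenberg scheme applied to the Euler--Lagrange system of $\mathcal F_\eps$, using $\tau_{-h}(\eta^2\tau_h v_{j,\eps})$ as test function and the Hölder bound \eqref{assF}$_3$, which produces a right-hand side of the form $|h|^{\alpha}\int\eta^2(1+|Dv_{j,\eps}|^2)^{(q-1)/2}|\tau_h Dv_{j,\eps}|\dx$; (ii) the crucial step: replacing the Sobolev embedding factor $\|v_{j,\eps}\|_{L^{p^{*}}}$ that would normally appear by the pointwise bound $\|v_{j,\eps}\|_{L^{\infty}(B_R)}\le c\|u\|_{L^{\infty}(B_R)}$, exactly as in the Caccioppoli of the second kind from \cite{BCM3, CM2}. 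After absorption using \eqref{assF}$_2$ and Young's inequality with exponents calibrated to \eqref{mainbound}, one obtains, letting $|h|\to 0$, a fractional estimate which upgrades to
$$
\int_{B_{\tau_1}}(1+|Dv_{j,\eps}|^2)^{\tilde p/2}\,\dx\le \frac{c}{(\tau_2-\tau_1)^{\kappa_1}}\bigl[1+\mathcal F(v_{j,\eps},B_{\tau_2})\bigr]^{\kappa_2},
$$
with $c$ independent of $\eps$ and $j$. The balance $\tilde p<p+\alpha\min\{1,p/2\}$ is exactly what makes the Young/interpolation step close, splitting into the cases $p\ge 2$ and $1<p<2$. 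A standard hole-filling/Giaquinta--Giusti lemma then absorbs the growing coefficient on a fixed pair of radii $\rr<R$.

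\emph{Step 4: Passing to the limit.} Fixing $j$ and sending $\eps\to 0$, the uniform bound of Step 3 together with weak lower semicontinuity and minimality gives $v_{j,\eps}\rightharpoonup v_j$ in $W^{1,\tilde p}_{\loc}(B_R,\erN)$ with $v_j=u_j$ on $\partial B_R$ in trace sense and $\mathcal F(v_j,B_R)\le\mathcal F(u_j,B_R)$; by the strict convexity built into \eqref{assF}$_2$ and equality of boundary values one identifies $v_j=u_j$ in $B_R$, and hence the same estimate holds for $u_j$. Sending then $j\to\infty$, using $\mathcal F(u_j,B_R)\to\mathcal F(u,B_R)$ and weak lower semicontinuity of $\|D\cdot\|_{L^{\tilde p}(B_\rr)}$, delivers \eqref{main-ine} for $u$. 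Since $\tilde p>q$, this gives $u\in W^{1,q}_{\loc}(\Omega,\erN)$ whenever \eqref{zerogap} holds on every ball $B_R\Subset\Omega$.

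The principal difficulty is Step 3: balancing the Hölder coefficient factor $|h|^\alpha(1+|Dv_{j,\eps}|)^{q-1}$ against the ellipticity degeneracy $(\lambda^2+|Dv_{j,\eps}|^2)^{(p-2)/2}$ in the $1<p<2$ regime, while exploiting the $L^\infty$ bound on $v_{j,\eps}$ to avoid any Sobolev embedding in the $x$-variable -- this is what allows the sharpened bound $q-p<\alpha\min\{1,p/2\}$ (as opposed to $q/p<1+\alpha/n$) to suffice.
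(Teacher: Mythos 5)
Your broad architecture matches the paper's: appeal to Proposition \ref{approssi2}, solve regularized Dirichlet problems with boundary data from the approximating sequence, derive a uniform a priori estimate by difference quotients with test function $\tau_{-h}(\eta^2\tau_h v)$, and close by weak lower semicontinuity and the strict monotonicity of $\partial_z F$. Steps 1, 2, and 4 would go through essentially as you describe, modulo minor bookkeeping.

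The genuine gap is in Step~3, and it is the heart of the theorem. You say the key is to ``replace the Sobolev embedding factor $\|v_{j,\eps}\|_{L^{p^*}}$ by the pointwise bound $\|v_{j,\eps}\|_{L^\infty}$, exactly as in the Caccioppoli of the second kind from \cite{BCM3,CM2}.'' That device does not interface with the finite-difference scheme you are running: after testing with $\tau_{-h}(\eta^2\tau_h v)$, the Caccioppoli estimate (cf. \eqref{33}) produces a right-hand side of order $|h|^\alpha\int(1+|Dv|^2)^{q/2}$ and never sees a factor $\|v\|_{L^{p^*}}$ at all; the $L^\infty$ information simply does not enter at that stage. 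What actually happens is this: the difference-quotient bound yields a \emph{fractional} Sobolev estimate $v\in W^{1+\beta/p,\,p}_{\loc}$ (for $p\ge 2$; with exponent $1+\beta/2$ and $\alpha$ replaced by $p\alpha/2$ when $1<p<2$). To convert this into $W^{1,\tilde p}$ with $\tilde p$ up to $p+\alpha\min\{1,p/2\}$, one must \emph{interpolate} the fractional smoothness against higher \emph{integrability} of $v$, not of $Dv$ --- specifically, one needs the fractional Gagliardo--Nirenberg inequality of Brezis--Mironescu type in the form of Lemma \ref{l3},
\begin{equation*}
\nr{v}_{W^{1,\tilde p}(B_\rr)}\le \frac{c}{(r-\rr)^{\kappa}}\,\nr{v}_{L^{2t}(B_r)}^{\frac{s-1}{s}}\,\nr{v}_{W^{s,p}(B_r)}^{\frac1s}\,,\qquad \tilde p=\frac{2pst}{p(s-1)+2t}\,,
\end{equation*}
where $2t$ is taken large. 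Without this tool the argument does not close; the usual $W^{s,p}\hookrightarrow W^{1,p^*_s}$ embedding would reintroduce the dimension $n$ and only yield the bound $q/p<1+\alpha/n$ of \cite{sharp}, not \eqref{mainbound}.

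Connected to this, your penalization fallback ``$\kappa(|v|-M)_+^{\tilde p}$, let $\kappa\to\infty$'' is also not the right device: the paper keeps the penalization $(\snr{w}^2-M^2)_+^t$ with $t$ \emph{fixed but large} (see \eqref{FJ}, \eqref{23bis}) precisely because the exponent $t$ must be calibrated to $\tilde p$ through the Gagliardo--Nirenberg exponent relation above. The penalization is not there to force $|v|\le M$ in the limit; it is there to give the uniform-in-$j$ control $\sup_j\nr{u_j}_{L^{2t}(B_R)}<\infty$ (estimate \eqref{32}) that feeds into Lemma \ref{l3}. The convergence $\mathcal F_j(u_j)\to\mathcal F(u)$ then kills the penalization term in the limit. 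So the $L^\infty$ bound on $u$ enters indirectly, via $L^{2t}$-bounds and interpolation, not via a pointwise substitution in a Caccioppoli-of-second-kind estimate. To repair your proposal you would need to (a) drop the maximum-principle/truncation step and instead include the $t$-penalization inside the regularized functional with $t$ tied to $\tilde p$ as in \eqref{23bis}, and (b) introduce and prove the localized fractional Gagliardo--Nirenberg inequality that replaces Sobolev embedding.
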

It remains to establish when \rif{zerogap} is satisfied. This is discussed Ssection \ref{absence} below. 
\begin{remark}\label{sharpdomina}
\emph{ The result of Theorem \ref{main1} is new only for $p< n$. Indeed, $p\geq n$ implies $p+\alpha \leq  p+ p\alpha/n$ and the assertion of Theorem \ref{main1} is implied by the one in \cite{sharp}, that works  assuming the bound $q/p< 1+\alpha/n$. On the other hand, for $p>n$ minimizers are automatically bounded, and the main assumption in Theorem \ref{main1}, i.e., $u \in L^{\infty}_{\loc}(\Omega,\RN)$, looses its meaning.}
\end{remark}

\subsection{Conditions implying absence of the gap}\label{absence} A first class of integrands for which $\mathcal{L}^{q}=\mathcal{L}_{b}^{q}=0$
%%%bozze holds
holds is given by those satisfying a double-sided control of the type 
\eqn{db1}
$$
a_0(x)F_0(z) \lesssim F(x,z)\lesssim  a_0(x)F_0(z)  +1\;. 
$$
Here $0 < \nu \leq a_0(x)\leq L $ is a measurable function and $F_0(\cdot)$ is non-negative and convex; see for instance \cite{EMM, sharp}. 
%%% see note 6
%Indeed, let $u \in W^{1,1}_{loc}(\Omega)$ be such that $F(\cdot,Du)\in L^1_{\loc}(\Omega)$. It follows that that $f(Du)\in L^1_{\loc}(\Omega)$. Therefore, with $u_\eps := u * \phi_\eps$ being the usually mollified functions, we have that $f(Du_\eps)\leq [f(Du)]*\phi_\eps$ and $a(x)f(Du_\eps)\leq a(x)[f(Du)]*\phi_\eps$. Therefore by a well-known variant of Lebesgue dominated convergence theorem we have than $a(x)f(Du_\eps) \to a(x)f(Du)$ in $L^1$. Again we have that $F(x,Du_\eps) \to F(x,Du)$ in $L^1$. 
To extend \rif{db1}, one can consider the setting of so-called Musielak-Orlicz spaces, widely discussed in \cite{hastobook}. In this case we replace \rif{db1} by the more general 
\eqn{lavvi}
$$
\Phi(x, |z|) \lesssim F(x,z)\lesssim  \Phi(x, |z|) +1\;,
$$ 
where $\Phi \colon \Omega \times [0, \infty) \to [0, \infty)$ is a Carath\'eodory function which is convex in the second variable; the relation with functionals as in \rif{PPhi} is obvious. Examples are again given by the variable exponent energy
$\Phi(x, |z|) \equiv  |z|^{p(x)}$ and of course by $\Phi(x, |z|) \equiv   |z|^{p}+a(x)|z|^q$; see Theorem \ref{bcmtheorem}. In the setting of \rif{lavvi} the absence of Lavrentiev phenomenon is strongly related to the density of smooth functions and the boundeness of maximal operators in related Musielak-Orlicz dspaces. In general these assumptions are again closely tied to those guaranteeing regularity of minima of corresponding functionals \rif{PPhi}. For such issues we refer to \cite{the4, HHT, hastobook, Hasto}. A general setting is described in \cite{sharp}. Further results in this direction can be found in \cite{CM2, CM3}, and we refer also to \cite{Dark} for a general overview. 
\subsection{Lipschitz estimates} We now consider the issue of Lipschitz regularity of minima of functionals as in \rif{genF}. This does not hold in the general vectorial case, and we therefore concentrate on the scalar one $N=1$. Several of the arguments developed here can be anyway adapted to the vectorial case as well, provided suitable structure conditions are assumed, i.e., $F(x, z)\equiv F(x, |z|)$ (see for instance \cite{BeckM, EMM}). We are not going to pursue this path here. The assumptions on the integrand $F(\cdot)$ in \trif{genF} are now as follows:
\eqn{assF2}
$$
\left\{
\begin{array}{c}
z\mapsto F(\cdot,z)\in C^{2}_{\loc}(\er^n\setminus \{0\})\cap C^{1}_{\loc}(\er^n)\\ [6 pt]
x\mapsto \partial_zF(x,z)\in W^{1,r}(\Omega,\er^n)\quad \mbox{for every $z \in \er^n$}\\ [6 pt]
\nu \bar{F}(\snr{z})+ \nu(\lambda^{2}+\snr{z}^{2})^{\frac{2-\mu}{2}}\le F(x,z)\le L(\lambda^{2}+\snr{z}^{2})^{\frac{q}{2}}+L (\lambda^{2}+\snr{z}^{2})^{\frac{2-\mu}{2}}\\ [6 pt]
\nu (\lambda^{2}+\snr{z}^{2})^{-\frac{\mu}{2}}\snr{\xi}^{2}\le \partial_{zz}F(x,z)\, \xi\cdot\xi\\ [6 pt]
\snr{\partial_{zz}F(x,z)}\le L(\lambda^{2}+\snr{z}^{2})^{\frac{q-2}{2}}+  L(\lambda^{2}+\snr{z}^{2})^{-\frac{\mu}{2}}\\ [6 pt]
\ \snr{\partial_{xz} F(x,z)}\le Lh(x)(\lambda^{2}+\snr{z}^{2})^{\frac{q-1}{2}}+ Lh(x) (\lambda^{2}+\snr{z}^{2})^{\frac{1-\mu}{2}} \;.
 \end{array}\right.
 $$
Conditions \rif{assF2} are assumed to hold for every choice of $z,\xi \in \er^n$, $|z|\not=0$, and for a.e. $x \in \Omega$, where 
$\lambda \in [0,1]$, and $0< \nu \leq 1 \leq L$ are fixed constants.  We initially require that $q\geq 2-\mu$, $\mu<2$ and $r>n$. The two functions $h \colon \Omega \to [0, \infty)$ and $ \bar{F}\colon [0, \infty)\to [0, \infty)$ satisfy
\eqn{assF3}
$$
\left\{
\begin{array}{c}
h(\cdot)\in L^{r}(\Omega) \ \ \mbox{(recall $r>n$)} \\ [8 pt]
\displaystyle
\lim_{t\to \infty}\frac{\bar{F}(t)}{ t}=\infty%\quad \mbox{and}\quad  t^{2-\mu}\leq \bar{F}(t)
\;.
\end{array}
\right.
$$
\begin{theorem}\label{main3}
Let $u\in W^{1,1}_{\loc}(\Omega)$ be a local minimizer of the functional $\mathcal F$ in \eqref{genF} under assumptions \trif{assF2}-\trif{assF3}, with 
\begin{flalign}\label{pq}
\mu <2\,, \qquad 1<q\,,\qquad  1\le \frac{q}{2-\mu}<1+\frac{r-n}{nr}\;.
\end{flalign}
Assume that 
\eqn{zerogap2}
$$\mathcal{L}^{q}(u,B_R)=0$$ holds for a ball $B_R\Subset \Omega$ with $R\leq 1$. If $B_{\rr}\Subset B_{R}$ is another ball concentric to $B_R$, then
\eqn{main-ine-lip}
$$
\nr{Du}_{L^{\infty}(B_{\rr})}\le c\left(\frac{L+L\|h\|_{L^{r}(B_R)}}{R-\rr}\right)^{\kappa_1}\left[1+\mathcal{F}(u,B_{R})\right]^{\kappa_{2}}
$$
holds for $c\equiv c(n,\nu, \mu,q,r)$ and $\kappa_{1}, \kappa_{2}\equiv \kappa_1, \kappa_2(n,\mu,q,r)$. In particular, if \trif{zerogap2} holds for every such ball $B_R \Subset \Omega$, then $u\in W^{1,\infty}_{\loc}(\Omega)$. \end{theorem}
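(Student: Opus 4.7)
The plan is to prove the estimate first for suitably regularized minimizers of approximate problems, then pass to the limit using the vanishing Lavrentiev gap hypothesis. First, since $\mathcal{L}^{q}(u,B_R)=0$, Proposition \ref{approssi1} yields a sequence $\{u_{j}\} \subset W^{1,q}(B_R)$ with $u_{j}\rightharpoonup u$ weakly in $W^{1,1}(B_R)$ and $\mathcal{F}(u_{j},B_R)\to\mathcal{F}(u,B_R)$. I would then regularize the integrand by considering $F_{j}(x,z):=F(x,z)+\sigma_{j}(1+|z|^{2})^{q/2}$ with $\sigma_{j}\downarrow 0$ fast, and solve the Dirichlet problem for $\mathcal{F}_{j}$ in $B_R$ with boundary datum $u_{j}$. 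The standard theory for functionals with nonsingular $q$-growth gives minimizers $v_{j}\in W^{1,q}(B_R)$ that are $W^{2,2}_{\loc}\cap W^{1,\infty}_{\loc}$ regular (\emph{a priori}), with $v_{j}\to u$ strongly enough in $W^{1,1}$ (via minimality comparison against $u_j$) to recover $\mathcal{F}(v_{j},B_{R})\to \mathcal{F}(u,B_R)$. The goal is then to produce Lipschitz bounds on $v_{j}$ that are stable in $j$, and conclude by lower semicontinuity.

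The core is an \emph{a priori} $L^{\infty}$-estimate for $Dv_{j}$ via a Moser-type iteration adapted to the non-uniformly elliptic setting, matching the variant advertised in Section \ref{moser2}. Working with the differentiated Euler-Lagrange system for a smooth minimizer $v$ of a functional verifying \rif{assF2}, I test with $D_{s}\bigl(D_{s}v\cdot \Phi(|Dv|)\eta^{2}\bigr)$ for a suitable increasing truncation $\Phi$ and cut-off $\eta\in C^{1}_{c}(B_{R})$, and exploit the ellipticity bound \rif{assF2}$_4$ together with the mixed-growth control \rif{assF2}$_5$-$_6$. The resulting Caccioppoli-type inequality at level $\gamma\geq 1$ controls $\int \eta^{2}(\lambda^{2}+|Dv|^{2})^{-\mu/2}|Dv|^{2\gamma-2}|D^{2}v|^{2}$ by $\int (\eta^{2}+|D\eta|^{2})(\lambda^{2}+|Dv|^{2})^{(q-2)/2+\gamma}$ and by an error involving $h(x)(\lambda^2+|Dv|^2)^{(q-1)/2+\gamma-1/2}$. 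Using Sobolev embedding on the left and Hölder on the right-hand side integrand with $h\in L^{r}$, one arrives at a recursive inequality of the form
\begin{equation}\label{mosery}
\Bigl(\int_{B_{\rr_{k+1}}} G^{(2-\mu+\gamma_{k})\chi}\Bigr)^{1/\chi}\leq \frac{c(1+\|h\|_{L^r})}{(\rr_k-\rr_{k+1})^{\kappa}}\int_{B_{\rr_{k}}} G^{q+\gamma_{k}}
\end{equation}
where $G=1+|Dv|^2$ and $\chi=\chi(n,r)>1$ is the Sobolev exponent adjusted for the $L^r$ tail coming from $h$. Here the condition $q/(2-\mu)<1+(r-n)/(nr)$ is exactly the gap-closing requirement ensuring that the exponent on the left, once divided by $\chi$, strictly exceeds the one on the right after the initial iteration, so that the Moser scheme converges and yields
$$
\|Dv\|_{L^\infty(B_{\rr})}^{\,2-\mu}\leq c\Bigl(\frac{L+L\|h\|_{L^{r}(B_R)}}{R-\rr}\Bigr)^{\kappa_{1}}\bigl[1+\mathcal{F}(v,B_{R})\bigr]^{\kappa_{2}}.
$$

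With this estimate applied to the regularized minimizers $v_{j}$, the constants depend only on $n,\nu,\mu,q,r$ and the quantities on the right-hand side, which are uniformly bounded thanks to $\mathcal{F}(v_{j},B_R)\to \mathcal{F}(u,B_R)<\infty$ and the fixed $\|h\|_{L^r}$. Up to subsequences, $Dv_j\rightharpoonup^{*}$ in $L^{\infty}(B_{\rr})$, and since $v_{j}\to u$ in $W^{1,1}_{\loc}$, the weak-$*$ limit is $Du$, which gives $\|Du\|_{L^{\infty}(B_{\rr})}$ bounded by the same right-hand side and proves \rif{main-ine-lip}. Covering arguments then yield $u\in W^{1,\infty}_{\loc}(\Omega)$ when \rif{zerogap2} holds on every ball.

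The main obstacle is executing the Moser iteration \rif{mosery} in a way that the two different scaling behaviours present in \rif{assF2} (namely the $q$-growth branch and the $(2-\mu)$-growth branch) can be handled simultaneously while keeping track of how the $L^r$ norm of $h$ enters at each step; the delicate point is that, in contrast with the classical $p$-growth setting, both the lower ellipticity bound and the coefficient term carry powers of $(\lambda^2+|Dv|^2)$ that differ from the bulk $q$-growth, so controlling the iteration quantitatively requires the arithmetic relation $q/(2-\mu)<1+(r-n)/(nr)$ between the non-uniform ellipticity ratio and the Sobolev deficit of $h$. All remaining arguments — approximation via the gap hypothesis, passage to the limit, and conversion of the \emph{a priori} estimate into one for the original minimizer — are standard once the iteration closes.
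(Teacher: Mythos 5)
Your overall strategy — approximate via the vanishing Lavrentiev gap, regularize, derive an a priori Lipschitz bound for the regularized minimizers via the differentiated Euler--Lagrange equation and a Moser-type iteration, then pass to the limit — is the same as the paper's. However there are two genuine gaps in the way you propose to carry this out.

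First, you only perturb the functional by adding $\sigma_j(1+|z|^2)^{q/2}$, and then assert that ``the standard theory for functionals with nonsingular $q$-growth'' gives $v_j\in W^{1,\infty}_{\loc}\cap W^{2,2}_{\loc}$. This is not automatic: under \eqref{assF2} the integrand is allowed to depend on $x$ only through a $W^{1,r}$-Sobolev map $x\mapsto \partial_z F(x,z)$, and it is allowed to degenerate at $z=0$ since $\lambda$ may vanish. Classical quotations giving $W^{2,2}_{\loc}\cap W^{1,\infty}_{\loc}$ regularity for the regularized minimizer require smooth $x$-dependence and nondegenerate ellipticity. The paper addresses exactly this by convolving the integrand in both $x$ and $z$ (with a shifted $\lambda_\delta=\lambda+\delta$) to obtain the smooth, nondegenerate integrands $F_{j,\delta}$ satisfying \eqref{assF22}; only then does \eqref{87} follow. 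You need this $\delta$-mollification layer (and a separate passage to the limit in $\delta$) before the $j\to\infty$ limit.

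Second, your recursive inequality \eqref{mosery} is not the right Moser step, and your explanation of the role of \eqref{pq} misidentifies the mechanism. As you wrote it, the right-hand side of \eqref{mosery} carries the exponent $q+\gamma_k$, so the very first iterate $k=1$ (with $\gamma_1=0$) already requires $\int G^{\,q}<\infty$, which is not available: the a priori finite quantity from the energy is only $\int G^{(2-\mu)/2}$. Saying the hypothesis \eqref{pq} guarantees ``the Moser scheme converges'' is not the issue — in Moser iteration the exponents always blow up once $\chi>1$; the obstruction here is initialization and absorption. The paper's modification is to peel off the excess $q$-growth as a factor $\|H_\delta(Du_{j,\delta})\|_{L^\infty(B_{\tau_2})}^{m\sigma}$, with $\sigma=q-\tfrac{2-\mu}{2}-\tfrac{2-\mu}{2m}$, so the integral actually kept in the recursion has exponent $\alpha_k$ with $\alpha_1=\tfrac{2-\mu}{2}$, starting from the controllable energy. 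After iterating, the accumulated $L^\infty$-power is $\theta$ as in \eqref{iltheta}, and \eqref{pq} is exactly what makes $\theta<1$; then Young's inequality plus the iteration Lemma~\ref{l0} absorb the $L^\infty$ term and give the a~priori estimate \eqref{98bis}. Without this extraction step the iteration you describe cannot be started, and you have not supplied any substitute. This is the heart of the proof, and it is missing from your proposal.
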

\begin{remark}\label{r6} 
\emph{The condition $R\leq1$ in Theorem \ref{main3} can obviously be dropped; we assumed it to make the proof more transparent. As all our results are local, we can put  $W^{1,r}_{\loc}$ and $L^r_{\loc}$ in \eqref{assF2}$_2$ and \eqref{assF3}$_1$, respectively. The $(p,q)$-growth setting can be recovered with the choice $\mu = 2-p$ and $\bar{F}(t)=(\lambda^2+t^2)^{p/2}$; see Section \ref{moser2} below. The exponents $\kk_1, \kk_2$ in \rif{main-ine-lip} can be explicitly computed (see Remark \ref{esp1} below) and they do coincide with those of the standard $(p,q)$-case when focusing on this situation (see Remark \ref{esp2} below). The one \rif{assF2}$_4$ is known as $\mu$-ellipticity condition and it is of common use in problems with linear and nearly-linear growth \cite{BS1,Bi,franz1}. It has been introduced in \cite{FM}.}\end{remark}
\begin{remark}\label{estensione} 
\emph{The technique considered here can be modified using a by now standard truncation argument in the gradient, as for instance in \cite{BeckM, EMM}. In this way we can also prescribe that assumptions \rif{assF2}$_{4,5,6}$ are satisfied only for $|z|>T$, for a fixed non-negative number $T$, but still considering convex integrands $F(\cdot)$. This is not surprising, as in order to get local Lipschitz regularity of minima only the behaviour of the functional for large values of the gradient matters. This allows for instance to treat functionals of the type
$$
w \mapsto \int_{\Omega} \left[\bar{F}(|Dw|)+ a(x)\bar{F}_q(|Dw|)\right] \dx 
$$ for convex integrands $\bar{F}(\cdot)$ and $\bar{F}_q(\cdot)$ such that $\bar{F}_q(t)\approx t^q$ for $t$ large and \rif{assF3}$_2$ holds. An instance is given by
$$
w \mapsto \int_{\Omega} \left[ \bar{F}_k(|Dw|) + a(x)(\lambda^2+|Dw|^2)^{\frac q2}\right] \dx , 
$$
where $\lambda \in [0,1]$ and 
$$
F_k(t) \approx t L_k(1+t) \,,\qquad  \left\{ \begin{array}{c} 
L_1(t):= \log (1+t)\\ [4 pt]
L_{k+1}(t):= \log \left(1+L_k(t)\right)
\end{array}\,, \right. \qquad k \in \en\;.
$$
When $a(x)\equiv 0$ such functionals are considered in \cite{FM, M3}.}\end{remark}
\begin{remark}\label{ontheass} \emph {In Theorem \ref{main3} we can assume 
$
F(x, 0)=0.
$ 
This can be seen by replacing $F(x, z)$ with $F(x,z)-F(x,0)$. Next, the standard proof of Morrey's embedding theorem gives that
$$
|\partial_z F(x_1, z)-\partial_z F(x_2, z)| \leq c L\|h\|_{L^r(\Omega)}\left[(\lambda^2+|z|^2)^{\frac{q-1}{2}}+(\lambda^2+|z|^2)^{\frac{1-\mu}{2}}\right] |x_1-x_2|^{1-\frac{n}{r}}
$$
holds for $c\equiv c(n,q,r)$ whenever $x_1, x_2\in \Omega$ and $z\in \er^n$. Integrating this last inequality and using
%%%bozze
$F(x_1,0)=F(x_2,0)=0$, we conclude with
\eqn{holderF}
$$
|F(x_1, z)-F(x_2, z)| \leq cL\|h\|_{L^r(\Omega)}\left[(\lambda^2+|z|^2)^{\frac{q}{2}}+(\lambda^2+|z|^2)^{\frac{2-\mu}{2}}\right] |x_1-x_2|^{1-\frac{n}{r}}
$$
again  for $c\equiv c(n,q,r)$.
}
\end{remark}
\section{Preliminaries}
In this paper we denote by $c$ a general constant larger than one. Different occurences from line to line will be still denoted by $c$, while special occurrences will be denoted by $c_1, c_2,  \tilde c$ and so on. Relevant
dependencies on parameters will be emphasised using parentheses, i.e., $c_{1}\equiv c_1(n,p)$ means that $c_1$ depends on $n,p$. In a similar fashion, by $\oo(\kappa)$ we denote a quantity depending on the parameter $\kappa$ such that $\oo(\kappa)\to 0$ when $\kappa$ goes to a relevant limit (typically $\kappa \to 0$ or $\kappa \to \infty$); also in this case the expression of $\oo(\kappa)$ might vary from line to line and relevant dependences are emphasized. We denote by $ B_r(x_0):=\{x \in \er^n \, : \,  |x-x_0|< r\}$ the open ball with center $x_0$ and radius $r>0$; when no ambiguity arises, we omit denoting the center as follows: $B_r \equiv B_r(x_0)$. Very often, when not otherwise stated, different balls in the same context will share the same center. When considering function spaces of vector valued maps, such as $L^p(\Omega,\er^k)$, $W^{1,p}(\Omega,\er^k)$ etc, we often abbreviate as $L^p(\Omega)$, $W^{1,p}(\Omega)$ and so on; the meaning will be clear from the context. With $\mathcal B \subset \er^{n}$ being a measurable subset with finite and positive measure $|\mathcal B|>0$, and with $g \colon \mathcal B \to \er^{k}$, $k\geq 1$, being a measurable map, we denote by  $$
   (g)_{\mathcal B} \equiv \mint_{\mathcal B}  g(x)  \dx  := \frac{1}{|\mathcal B|}\int_{\mathcal B}  g(x)  \dx
$$
its integral average. We now recall a few basic facts concerning fractional Sobolev spaces. 
\begin{definition}\label{fra1def}
Let $\alpha \in (0,1)$, $p \in [1, \infty)$, $k \in \en$, and let $\Omega \subset \er^n$ be an open subset with $n\geq 2$ (we allow for the case $\Omega =\er^n$). The fractional Sobolev space $W^{\alpha ,p}(\Omega,\er^k )$ is defined prescribing that $f \colon \Omega \to \er^k$ belongs to  $W^{\alpha ,p}(\Omega,\er^k )\equiv W^{\alpha ,p}(\Omega)$ iff
the following Gagliardo type norm is finite:
\begin{eqnarray*}\| f \|_{W^{\alpha ,p}(\Omega )} & := &\|f\|_{L^p(\Omega,\er^k)}+ \left(\int_{\Omega} \int_{\Omega}  
\frac{|f(x)
- f(y) |^{p}}{|x-y|^{n+\alpha p}} \ dx \, dy \right)^{1/p}\\
&=:& \|f\|_{L^p(\Omega,\er^k)} + [f]_{\alpha, p;\Omega}\,.
\end{eqnarray*}
Accordingly, in the case $\alpha = [\alpha]+\{\alpha\}\in \en + (0,1)>1$, we say that $f\in W^{\alpha ,p}(\Omega,\er^k )$ iff the following quantity is finite
%%%bozze ultima derivata
\eqn{derfrac}
$$
\| f \|_{W^{\alpha ,p}(\Omega )}  := \| f \|_{W^{[\alpha],p}(\Omega )} +[D^{[\alpha]}f]_{\{\alpha\}, p;\Omega}\;.
$$
The local variant $W^{\alpha ,p}_{\loc}(\Omega,\er^k )$ is defined by requiring that $f \in W^{\alpha ,p}_{\loc}(\Omega,\er^k )$ iff $f \in W^{\alpha ,p}(\tilde{\Omega},\er^k)$ for every open subset $\tilde{\Omega} \Subset \Omega$. 
\end{definition}
For a map $f \colon \Omega \to \er^k$ and a vector $h \in \er^n$, we denote by $\tau_{h}\colon L^1(\Omega,\er^k) \to L^{1}(\Omega_{|h|},\er^k)$ the standard finite difference
operator pointwise defined as
$
\tau_{h}f(x)\equiv \tau_{h}f(x)\equiv \tau_{h}(f)(x):=f(x+h)-f(x)
$, 
whenever $\Omega_{|h|}:=\{x \in \Omega \, : \, 
\dist(x, \partial \Omega) > |h|\}$ is not empty.
\begin{definition}\label{fra2def}
Let $\alpha \in (0,1)$, $p \in [1, \infty)$, $k \in \en$, and let $\Omega \subset \er^n$ be an open subset with $n\geq 2$. The Nikol'skii space $N^{\alpha,p}(\Omega,\er^k )$ is defined prescribing that $f \in N^{\alpha,p}(\Omega,\er^k )$ iff 
$$\| f \|_{N^{\alpha,p}(\Omega,\er^k )} :=\|f\|_{L^p(\Omega,\er^k)} + \left(\sup_{|h|\not=0}\, \int_{\Omega_{|h|}} 
\frac{|f(x+h)
- f(x) |^{p}}{|h|^{\alpha p}} \ dx  \right)^{1/p}\;.$$
The local variant $N^{\alpha,p}_{\loc}(\Omega,\er^k )$ is defined by requiring that $f \in N^{\alpha,p}_{\loc}(\Omega,\er^k )$ iff $f \in N^{\alpha,p}(\tilde \Omega,\er^k)$ for every open subset $\tilde{\Omega} \Subset \Omega$.
\end{definition}
We have that $ W^{\alpha ,p}(\Omega,\er^k)\subsetneqq N^{\alpha,p}(\Omega,\er^k)\subsetneqq
W^{\beta,p}(\Omega,\er^k)$, for every $\beta <\alpha$, hold for sufficiently domains $\Omega$. A local, quantified version is in the next lemma (see for instance \cite{AKM}).
\begin{lemma}\label{l2}
Let $B_{r}\Subset \er^n$ be a ball with $r\leq 1$, $f\in L^{p}(B_{r},\mathbb{R}^{k})$, $p>1$ and assume that, for $\alpha \in (0,1]$, $S\ge 1$ and concentric balls $B_{\rr}\Subset B_{r}$, there holds
 \eqn{soddisfa}
 $$
\nr{\tau_{h}f}_{L^{p}(B_{\rr},\er^k)}\le S\snr{h}^{\alpha } \quad \mbox{
for every $h\in \mathbb{R}^{n}$ with $0<\snr{h}\le \frac{r-\rr}{K}$, where $K \geq 1$}\;.$$ 
Then $f\in W^{\beta,p}(B_{\rr},\mathbb{R}^{k})$ whenever $\beta\in (0,\alpha )$ and
\eqn{SSS}
$$
\nr{f}_{W^{\beta,p}(B_{\rr},\er^k)}\le\frac{c}{(\alpha -\beta)^{1/p}}
\left(\frac{r-\rr}{K}\right)^{\alpha -\beta}S+c\left(\frac{K}{r-\rr}\right)^{n/p+\beta} \nr{f}_{L^{p}(B_{r},\er^k)}\;,
$$
holds, where $c\equiv c(n,p)$. 
\end{lemma}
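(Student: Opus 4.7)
The plan is to bound the Gagliardo seminorm $[f]_{\beta,p;B_{\rr}}$, since once this is done the full norm \rif{derfrac} is obtained by adding the trivial inequality $\nr{f}_{L^p(B_{\rr},\er^k)}\le \nr{f}_{L^p(B_{r},\er^k)}$. The first step is to substitute $y=x+h$ in the double integral and use Fubini to rewrite
$$
[f]_{\beta,p;B_{\rr}}^p = \int_{\er^n}\frac{1}{\snr{h}^{n+\beta p}}\left(\int_{B_{\rr}\cap(B_{\rr}-h)}\snr{\tau_h f(x)}^p\dx\right) dh\,,
$$
observing that the inner integral is empty as soon as $\snr{h}\ge 2\rr$. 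The decisive move is then to split the outer integration at the scale $\delta:=(r-\rr)/K$ fixed by the hypothesis.

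For small displacements $\snr{h}\le \delta$ the assumption \rif{soddisfa} applies and gives $\int_{B_{\rr}\cap(B_{\rr}-h)}\snr{\tau_h f(x)}^p\dx \le S^p \snr{h}^{\alpha p}$. Passing to polar coordinates in $h$ then yields
$$
\int_{\{\snr{h}\le \delta\}} \frac{S^p\snr{h}^{\alpha p}}{\snr{h}^{n+\beta p}}\, dh = \omega_{n-1}S^p\int_0^{\delta}\rho^{(\alpha-\beta)p-1}\, d\rho = \frac{\omega_{n-1}}{(\alpha-\beta)p}\,S^p\delta^{(\alpha-\beta)p}\,.
$$
This is precisely the source of the $(\alpha-\beta)^{-1/p}$ singularity appearing in \rif{SSS} once the $p$-th root is taken; it is the only step where I have to keep the explicit dependence on $\alpha-\beta$ visible rather than absorb it into a generic constant.

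For large displacements $\delta<\snr{h}<2\rr$ the hypothesis is unavailable, so I would just use the crude bound $\snr{\tau_h f(x)}^p\le 2^{p-1}(\snr{f(x+h)}^p+\snr{f(x)}^p)$ together with Fubini to control the inner integral by $2^p\nr{f}_{L^p(B_{r},\er^k)}^p$; a further polar computation gives
$$
\int_{\{\snr{h}>\delta\}}\frac{1}{\snr{h}^{n+\beta p}}\, dh \le \omega_{n-1}\int_{\delta}^{\infty}\rho^{-1-\beta p}\, d\rho = \frac{\omega_{n-1}}{\beta p}\,\delta^{-\beta p}\,,
$$
so this piece contributes at most $c\,\delta^{-\beta p}\nr{f}_{L^p(B_{r},\er^k)}^p$ to $[f]_{\beta,p;B_{\rr}}^p$.

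Taking $p$-th roots, summing the two contributions, and appending the $L^p$-piece produces
$$
\nr{f}_{W^{\beta,p}(B_{\rr},\er^k)}\le \frac{c}{(\alpha-\beta)^{1/p}}\left(\frac{r-\rr}{K}\right)^{\alpha-\beta}S + c\left(\frac{K}{r-\rr}\right)^{\beta}\nr{f}_{L^p(B_{r},\er^k)} +\nr{f}_{L^p(B_{r},\er^k)}\,.
$$
Since $K\ge 1$ and $r-\rr\le r\le 1$ imply $K/(r-\rr)\ge 1$, the last two terms are dominated by $c(K/(r-\rr))^{n/p+\beta}\nr{f}_{L^p(B_{r},\er^k)}$, yielding \rif{SSS} with a constant $c\equiv c(n,p)$. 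The only delicate step of the argument is the careful tracking of the $1/(\alpha-\beta)$ factor in the small-$h$ polar integral; everything else is a routine splitting and application of Fubini.
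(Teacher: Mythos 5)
The paper does not actually prove this lemma; it is stated and the proof is referred to \cite{AKM}, so there is no internal proof to compare against. Your overall strategy --- rewrite the Gagliardo double integral as an $h$-integral over translates and split at scale $\delta=(r-\varrho)/K$ --- is the standard and correct one. The small-$h$ piece is handled properly and produces exactly the $(\alpha-\beta)^{-1/p}\delta^{\alpha-\beta}S$ term.

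However, there is a genuine gap in the large-$h$ piece. After passing to polar coordinates you evaluate
\begin{equation*}
\int_{\delta}^{\infty}\rho^{-1-\beta p}\,d\rho=\frac{1}{\beta p}\,\delta^{-\beta p}\,,
\end{equation*}
and then write that ``this piece contributes at most $c\,\delta^{-\beta p}\|f\|_{L^p}^p$'' with $c\equiv c(n,p)$. That is false: the factor $1/(\beta p)$ is unbounded as $\beta\to 0^+$ and cannot be absorbed into a constant depending only on $n,p$. Your final inequality therefore silently carries a hidden $\beta^{-1/p}$, which contradicts the claim $c\equiv c(n,p)$ in \eqref{SSS}. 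Note that the exponent $n/p+\beta$ in \eqref{SSS}, which at first sight looks wasteful (you only ``need'' $\beta$), is there precisely to avoid this singularity. The fix is to exploit the fact that the inner integral vanishes for $|h|\ge 2\varrho\le 2$, so the outer integration is over a bounded annulus, and then use, for $\rho\ge\delta$, the elementary bound $\rho^{-1-\beta p}=\rho^{n-1}\rho^{-n-\beta p}\le \rho^{n-1}\delta^{-n-\beta p}$ to get
\begin{equation*}
\int_{\delta}^{2\varrho}\rho^{-1-\beta p}\,d\rho\le \delta^{-n-\beta p}\int_{0}^{2\varrho}\rho^{n-1}\,d\rho\le\frac{2^{n}}{n}\,\delta^{-n-\beta p}\,.
\end{equation*}
After taking $p$-th roots this yields exactly $c(n,p)\,\delta^{-n/p-\beta}\|f\|_{L^p(B_r)}$, i.e.\ the second term of \eqref{SSS} with a $\beta$-free constant. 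Once this replacement is made, the remainder of your argument (adding the $L^p$ piece, observing that $K/(r-\varrho)\ge 1$ to merge terms) closes correctly.
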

We finally report a well-known  iteration lemma whose proof can be found in \cite{GGActa}. 
%%%bozze
\begin{lemma}\label{l0}
Let $\mathcal{Z}\colon [\rr,R)\to [0,\infty)$ be a function which is bounded on every interval $[\varrho, R_*]$ with $R_*<R$. Let $\varepsilon\in (0,1)$, $a_1,a_2,\gamma_{1},\gamma_{2}\ge 0$ be numbers. If
\begin{flalign*}
\mathcal{Z}(\tau_1)\le \varepsilon \mathcal{Z}(\tau_2)+ \frac{a_1}{(\tau_2-\tau_1)^{\gamma_{1}}}+\frac{a_2}{(\tau_2-\tau_1)^{\gamma_{2}}}\ \ \mbox{for all} \ \rr\le \tau_1<\tau_2< R\;,
\end{flalign*}
then
\begin{flalign*}
\mathcal{Z}(\rr)\le c\left[\frac{a_1}{(R-\rr)^{\gamma_{1}}}+\frac{a_2}{(R-\rr)^{\gamma_{2}}}\right]\;,
\end{flalign*}
holds with $c\equiv c(\varepsilon,\gamma_{1},\gamma_{2})$.
\end{lemma}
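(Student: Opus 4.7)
The plan is the classical geometric iteration argument of Giaquinta--Giusti: apply the assumed inequality along a geometric sequence of radii converging to an intermediate endpoint, choose the common ratio so the resulting numerical series converges, and then pass to the limit. The only subtle point is that $\mathcal{Z}$ is \emph{not} assumed bounded on all of $[\rr,R)$, only on each closed subinterval $[\rr,R_*]$ with $R_*<R$; this is handled by iterating only up to such an $R_*$ and sending $R_*\nearrow R$ at the very end.

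Concretely, I fix $R_*\in(\rr,R)$ and $\lambda\in(0,1)$ to be chosen later, and set
$$\tau_i := \rr + (1-\lambda^{i})(R_*-\rr)\,, \qquad i\in\en\cup\{0\}\,.$$
Then $\tau_0=\rr$, the sequence $\tau_i$ increases to $R_*$, and $\tau_{i+1}-\tau_i=(1-\lambda)\lambda^{i}(R_*-\rr)$. Since $\{\tau_i\}\subset[\rr,R_*]$, the hypothesis provides $M:=\sup_{[\rr,R_*]}\mathcal{Z}<\infty$, so in particular $\mathcal{Z}(\tau_i)\le M$ for every $i$. Plugging the pair $(\tau_i,\tau_{i+1})$ into the assumed inequality and iterating $N$ times by induction yields
$$\mathcal{Z}(\rr)\le \varepsilon^{N}\mathcal{Z}(\tau_N) + \sum_{i=0}^{N-1}\varepsilon^{i}\left[\frac{a_1}{[(1-\lambda)\lambda^{i}(R_*-\rr)]^{\gamma_1}}+\frac{a_2}{[(1-\lambda)\lambda^{i}(R_*-\rr)]^{\gamma_2}}\right].$$

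Setting $\gamma:=\max\{\gamma_1,\gamma_2\}$, I pick $\lambda\in(\varepsilon^{1/\gamma},1)$, which is possible because $\varepsilon<1$. Then $\theta_j:=\varepsilon\lambda^{-\gamma_j}<1$ for $j=1,2$, so both geometric series $\sum_{i\ge 0}\theta_j^{i}$ converge and together contribute a constant $c\equiv c(\varepsilon,\gamma_1,\gamma_2)$, which also absorbs the factors $(1-\lambda)^{-\gamma_j}$. Letting $N\to\infty$, the tail term satisfies $\varepsilon^{N}\mathcal{Z}(\tau_N)\le \varepsilon^{N}M\to 0$; this is precisely where the local boundedness hypothesis on $\mathcal{Z}$ is consumed. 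I conclude
$$\mathcal{Z}(\rr)\le c\left[\frac{a_1}{(R_*-\rr)^{\gamma_1}}+\frac{a_2}{(R_*-\rr)^{\gamma_2}}\right]$$
with $c$ independent of $R_*$, and sending $R_*\nearrow R$ delivers the claim.
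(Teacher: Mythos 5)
Your proof is correct and follows the classical Giaquinta--Giusti geometric iteration argument, which is exactly the approach the paper points to: the paper does not itself prove Lemma~\ref{l0} but refers the reader to \cite{GGActa}. Your handling of the weakened boundedness hypothesis (boundedness only on compact subintervals $[\rr,R_*]$, not on all of $[\rr,R)$) by iterating toward $R_*$ and sending $R_* \nearrow R$ at the end is the right way to accommodate the half-open interval in the statement, and your choice of $\lambda \in (\varepsilon^{1/\gamma},1)$ with $\gamma=\max\{\gamma_1,\gamma_2\}$ correctly makes both geometric series $\sum_i \varepsilon^i \lambda^{-\gamma_j i}$ converge with a constant depending only on $\varepsilon,\gamma_1,\gamma_2$.
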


\section{Proof of Theorem \ref{main1}}

%%%bozze is --> in
\subsection{A fractional Gagliardo-Nirenberg type inequality} In the proof of Theorem \ref{main1} we shall use a Gagliardo-Nirenberg type interpolation inequality, 
that we state here in a suitably localized form. In fact, the inequality we are going to use here requires the use of certain Gagliardo-Nirenberg inequalities in Triebel-Lizorkin spaces, as explained in \cite{brmi, brmi2}. 
\begin{lemma}\label{l3}
Let $B_{\rr}\Subset B_{r}\Subset \er^n$ be concentric balls with $r \leq 1$, $p,t\in (1,\infty)$, $s \in (1,2)$ and $f\in W^{s,p}(B_{r},\mathbb{R}^{N})\cap L^{2t}(B_{r},\mathbb{R}^{N})$ with $N\ge 1$. Then
\begin{flalign}\label{gn1}
\nr{f}_{W^{1,\tilde{p}}(B_{\rr})}\le \frac{c(n,p,s,t)}{(r-\rr)^{\kappa}}\nr{f}^{\frac{s-1}{s}}_{L^{2t}(B_{r})}\nr{f}^{\frac 1s}_{W^{s,p}(B_{r})}
\end{flalign}
holds with $\kappa \equiv \kappa (n,p,s,t)>0$, where
\begin{flalign}\label{51}
\tilde{p}:=\frac{2pst}{p(s-1)+2t}\;.
\end{flalign}
%In \trif{gn1} the constant $c$ is a non-decreasing function of the radius $r$. 
\end{lemma}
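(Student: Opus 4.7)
The plan is to derive \eqref{gn1} by applying a global fractional Gagliardo--Nirenberg inequality on $\er^{n}$ to the cut-off extension $\eta f$, the localization producing the $(r-\rr)^{-\kappa}$ factor. As a sanity check, the exponent $\tilde p$ in \eqref{51} is precisely the one dictated by scaling: interpolating between $W^{s,p}$ (order $s$) and $L^{2t}$ (order $0$) at target order $1$ forces the weight $\theta = 1/s$, and the scaling identity
$$
1-\frac{n}{\tilde p}=\frac{1}{s}\left(s-\frac{n}{p}\right)+\frac{s-1}{s}\left(-\frac{n}{2t}\right)
$$
yields \eqref{51} after a direct computation.

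Concretely, I would first invoke the fractional Gagliardo--Nirenberg inequality for Triebel--Lizorkin/fractional Sobolev spaces proved in \cite{brmi, brmi2}: for every $g \in W^{s,p}(\er^{n},\RN)\cap L^{2t}(\er^{n},\RN)$ one has
$$
\nr{Dg}_{L^{\tilde p}(\er^{n})}\le c(n,p,s,t)\,\nr{g}^{(s-1)/s}_{L^{2t}(\er^{n})}\nr{g}^{1/s}_{W^{s,p}(\er^{n})}.
$$
Then I would pick a cut-off $\eta \in C_{c}^{\infty}(B_{r})$ with $0\le \eta\le 1$, $\eta \equiv 1$ on $B_{\rr}$ and $\nr{D^{j}\eta}_{L^{\infty}(\er^{n})}\le c(r-\rr)^{-j}$ for $j=0,1,2$, and apply the displayed global inequality to $g:=\eta f$, extended by zero outside $B_{r}$. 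Since $D(\eta f)=Df$ on $B_{\rr}$ and $\nr{\eta f}_{L^{2t}(\er^{n})}\le \nr{f}_{L^{2t}(B_{r})}$, this yields the required bound on $\nr{Df}_{L^{\tilde p}(B_{\rr})}$ provided one also controls $\nr{\eta f}_{W^{s,p}(\er^{n})}$ by $\nr{f}_{W^{s,p}(B_{r})}$ with a polynomial loss $(r-\rr)^{-m}$. The leftover term $\nr{f}_{L^{\tilde p}(B_{\rr})}$ needed to reconstruct $\nr{f}_{W^{1,\tilde p}(B_{\rr})}$ on the left-hand side is handled by H\"older's inequality when $\tilde p\le 2t$ and by a zero-order Gagliardo--Nirenberg step otherwise, in both cases bounded in terms of $\nr{f}_{L^{2t}(B_{r})}$ and $\nr{f}_{W^{s,p}(B_{r})}$.

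The main technical step, and the principal obstacle, is the fractional Leibniz estimate for $\eta f$. Its integer pieces $\nr{\eta f}_{L^{p}}+\nr{D(\eta f)}_{L^{p}}$ follow from the classical product rule, with any spurious $\nr{f}_{W^{1,p}}$ reabsorbed by the trivial embedding $W^{s,p}\hookrightarrow W^{1,p}$ (recall $s>1$). For the Gagliardo seminorm of order $s-1\in(0,1)$ applied to $D(\eta f)$, I would decompose $\tau_{h}(\partial_{i}(\eta f))$ via the bilinear identity
$$
\tau_{h}(\partial_{i}(\eta f))= \eta(\cdot+h)\tau_{h}(\partial_{i} f)+(\partial_{i}f)(\cdot)\,\tau_{h}\eta+(\partial_{i}\eta)(\cdot+h)\tau_{h}f+f(\cdot)\tau_{h}(\partial_{i}\eta)
$$
and split the ensuing double integral into a near-diagonal region (where Taylor-expanding $\eta$ converts the differences of the cut-off into $\nr{D\eta}_{L^{\infty}}|h|$ or $\nr{D^{2}\eta}_{L^{\infty}}|h|^{2}$) and a far region (treated directly using $\nr{\eta}_{L^{\infty}}\le 1$ and integrability of $|h|^{-n-(s-1)p}$ at infinity in the appropriate sense). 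The two derivatives of $\eta$ produced by the worst terms are exactly what dictates the $(r-\rr)$-dependence of $\kappa$, and tracking this dependence cleanly is the delicate part of the argument. Once the bound $\nr{\eta f}_{W^{s,p}(\er^{n})}\le c(r-\rr)^{-m}\nr{f}_{W^{s,p}(B_{r})}$ is secured, a final use of Young's inequality to redistribute exponents concludes the proof of \eqref{gn1}.
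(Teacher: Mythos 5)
Your proposal is correct and follows essentially the same route as the paper: invoke the global Brezis--Mironescu Gagliardo--Nirenberg inequality on $\er^n$, localize by multiplying with a cut-off $\eta$, and control $\nr{\eta f}_{W^{s,p}(\er^n)}$ by $\nr{f}_{W^{s,p}(B_r)}$ with a polynomial loss in $(r-\rr)$ coming from derivatives of $\eta$, the delicate point being the Gagliardo seminorm of order $s-1$ of $D(\eta f)$. One small simplification you could make: the version of the Brezis--Mironescu inequality cited in the paper (\cite[Lemma 3.1, Cor.~3.2(a)]{brmi}) already bounds the full $\nr{\cdot}_{W^{1,\tilde p}(\er^n)}$ norm rather than just $\nr{D(\cdot)}_{L^{\tilde p}}$, so your extra step to recover $\nr{f}_{L^{\tilde p}(B_\rr)}$ (with the case distinction $\tilde p\lessgtr 2t$) is not needed.
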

\begin{proof} We denote 
$s=1+\tau$, where $\tau\in (0,1)$; all the balls considered in the following will be concentric to $B_r$. Let $0<\rr<r\le 1$, $\eta\in C^{2}_{c}(B_{r})$ be a cut-off function such that
\begin{flalign}\label{lata1}
\mathds{1}_{B_{\rr}}\le \eta\le \mathds{1}_{B_{r_1}} \ \ \mbox{and} \ \ \snr{D\eta}^{2}+\snr{D^{2}\eta}\lesssim \frac{1}{(r-\rr)^{2}}\;,
\end{flalign}
where $r_1:=(r+\rr)/2$. From \cite[Lemma 3.1 and  Corollary 3.2, (a)]{brmi} (see also \cite{brmi2}) we know that, if $\tilde{f}\in W^{s,p}(\mathbb{R}^{n})\cap L^{2t}(\mathbb{R}^{n})$ with $p,s,t>1$, 
%%%bozze then
then there holds
\begin{flalign}\label{13}
\nr{\tilde{f}}_{W^{1,\tilde{p}}(\mathbb{R}^{n})}\le c(n,p,s,t)\nr{\tilde{f}}^{\frac{s-1}{s}}_{L^{2t}(\mathbb{R}^{n})}\nr{\tilde{f}}_{W^{s,p}(\mathbb{R}^{n})}^{\frac 1s}\;,
\end{flalign}
where $\tilde{p}$ is as in \eqref{51}. Let $\tilde{f}:=f\eta$, with $\eta$ being as in \eqref{lata1}. Let us check (recall \rif{derfrac}) that 
\begin{flalign}\label{15}
\tilde{f}\in W^{s,p}(\mathbb{R}^{n},\mathbb{R}^{N})\cap L^{2t}(\mathbb{R}^{n},\mathbb{R}^{N})\;.
\end{flalign}
We trivially have
\begin{flalign}\label{16}
\nr{\tilde{f}}_{L^{2t}(\mathbb{R}^{n})}\leq \nr{f}_{L^{2t}(B_{r})}\qquad \mbox{and}\qquad \nr{\tilde{f}}_{L^{p}(\mathbb{R}^{n})}\leq \nr{f}_{L^{p}(B_{r})}\;,
\end{flalign}
and (by $\eqref{lata1}$)
\eqn{17}
$$
\nr{D\tilde f}_{L^{p}(\er^n)}^{p}
\le \frac{c}{(r-\rr)^{p}}\nr{f}_{L^{p}(B_{r})}^{p}+c\nr{Df}^{p}_{L^{p}(B_{r})}\;.
$$
Next, set $r_2:=(\rr+3r)/4=(r_1+r)/2$, so that $\varrho < r_1 <r_2 <r$. Recalling that $\tilde f \equiv 0$ outside $B_{r_1}$, we have
\begin{flalign*}
[D\tilde{f}]_{\tau,r;\er^n}^p:=&\int_{\mathbb{R}^{n}}\int_{\mathbb{R}^{n}}\frac{\snr{D\tilde{f}(x)-D\tilde{f}(y)}^{p}}{\snr{x-y}^{n+\tau p}}  \dx \dy =\int_{B_{r_2}}\int_{B_{r_2}}\frac{\snr{D\tilde{f}(x)-D\tilde{f}(y)}^p}{\snr{x-y}^{n+\tau p}} \dx  \dy \nonumber \\
&+2\int_{\mathbb{R}^{n}\setminus B_{r_2}}\int_{B_{r_2}}\frac{\snr{D\tilde{f}(x)-D\tilde{f}(y)}^p}{\snr{x-y}^{n+\tau p}}  \dx \dy =:\mbox{(I)}+\mbox{(II)}\;.
\end{flalign*}
Expanding the expression of $\tilde{f}$, we have
\begin{eqnarray*}%&=&\int_{B_{r_2}}\int_{B_{r_2}}\frac{|[\eta(x)Df(x)+f(x)D\eta(x)]-[\eta(y)Df(y)+f(y)D\eta(y)]|^{p}}{\snr{x-y}^{n+\tau p}} \dx  \dy \nonumber\\
\mbox{(I)}&\le &\, c\int_{B_{r_2}}\int_{B_{r_2}}\frac{\snr{\eta(x)Df(x)-\eta(y)Df(y)}^{p}}{\snr{x-y}^{n+\tau p}}\dx  \dy \nonumber \\
&&\, +c\int_{B_{r_2}}\int_{B_{r_2}}\frac{\snr{f(x)D\eta(x)-f(y)D\eta(y)}^{p}}{\snr{x-y}^{n+\tau p}}\dx  \dy =:c(p)\left[\mbox{(I)}_{1}+\mbox{(I)}_{2}\right]\;.
\end{eqnarray*}
Using also \rif{lata1}, we estimate
\begin{eqnarray*}
 \notag \mbox{(I)}_{1}& \leq & c\int_{B_{r_2}}\int_{B_{r_2}}\frac{[\eta(x)]^{p}\snr{Df(x)-Df(y)}^{p}}{\snr{x-y}^{n+\tau p}} \dx  \dy  
\\&& \quad + c \int_{B_{r_2}}\int_{B_{r_2}}\frac{\snr{Df(y)}^{p}\snr{\eta(x)-\eta(y)}^{p}}{\snr{x-y}^{n+\tau p}} \dx  \dy  \le c[Df]_{\tau, p;B_{r}}^p+\frac{cr^{p-p\tau}\nr{Df}^{p}_{L^{p}(B_{r})}}{(1-\tau)(r-\rr)^{p}}
\end{eqnarray*}
for $c\equiv c (n,p)$, and
\begin{eqnarray*}
\nonumber
\mbox{(I)}_{2}&\le &  c\int_{B_{r_2}}\int_{B_{r_2}}\frac{\snr{D\eta(x)}^{p}\snr{f(x)-f(y)}^{p}}{\snr{x-y}^{n+\tau p}} \dx  \dy \\ && \quad+c\int_{B_{r_2}}\int_{B_{r_2}}\frac{\snr{f(y)}^{p}\snr{D\eta(x)-D\eta(y)}^{p}}{\snr{x-y}^{n+\tau p}}  \dx \dy 
\le \frac{c[f]^{p}_{\tau,p;B_{r_2}}}{(r-\rr)^{p}}+\frac{cr^{p-p\tau}\nr{f}^{p}_{L^{p}(B_{r_2})}}{(1-\tau)(r-\rr)^{2p}}\;.
\end{eqnarray*}
Now notice that, if $h\in \mathbb{R}^{n}$ is any vector with $\snr{h}\le (r-r_2)/2= (r-\varrho)/8\leq 1$, since $f\in W^{1,p}(B_{r},\mathbb{R}^{N})$ there holds 
$$
\snr{h}^{-\frac{1+\tau}{2}}\nr{\tau_{h}f}_{L^{p}(B_{r_2})}\le c|h|^{\frac{1-\tau}{2}}\nr{Df}_{L^{p}(B_{r})}\le c(r-\varrho)^{\frac{1-\tau}{2}}\nr{Df}_{L^{p}(B_{r})}\;,
$$
so that Lemma \ref{l2} gives
$$
[f]_{\tau, p;B_{r_2}}\le \frac{c}{(1-\tau)^{\frac 1p}(r-\rr)^{\frac{n+p\tau}{p}}}\left(\nr{f}_{L^{p}(B_{r})}+\nr{Df}_{L^{p}(B_{r})}\right)\;,
$$
for $c\equiv c(n,p,\tau)$. Merging the content of the last three displays we obtain
$$
\mbox{(I)}_{2}\le \frac{c}{(1-\tau)(r-\rr)^{n+2p}}\left(\nr{f}_{L^{p}(B_{r})}^{p}+\nr{Df}^{p}_{L^{p}(B_{r})}\right)\;,
$$
with $c\equiv c(n,p,\tau)$ and we have used that $r \leq 1$. As for (II), we have
\begin{eqnarray*}
\mbox{(II)}&=&2\int_{\mathbb{R}^{n}\setminus B_{r_2}}\int_{B_{r_2}}\frac{\snr{D\tilde{f}(x)}^p}{\snr{x-y}^{n+\tau p}}  \dx \dy
\nonumber \\
&\leq &2\int_{\mathbb{R}^{n}\setminus B_{r_2}}\int_{B_{r_1}}\frac{|\eta(x)Df(x)|^{p}}{\snr{x-y}^{n+\tau p}} \dx  \dy 
+2\int_{\mathbb{R}^{n}\setminus B_{r_2}}\int_{B_{r_1}}\frac{|f(x)D\eta(x)|^{p}}{\snr{x-y}^{n+\tau p}} \dx  \dy \\
&=: &   \mbox{(II)}_{1}+\mbox{(II)}_{2}\;,
\end{eqnarray*}
and we have used that $\eta$ vanishes outside $B_{r_1}$. As $r_2-r_1=(r-\rr)/4$, note that if $x\in B_{r_1}$ and $y\in \mathbb{R}^{n}\setminus B_{r_2}$, then
\begin{flalign*}
\snr{y-x}\ge \snr{y}\left[\frac{\snr{y}-\snr{x}}{\snr{y}}\right]=\snr{y}\left[1-\frac{\snr{x}}{\snr{y}}\right]\ge \frac{\snr{y}}{4r_2}(r-\rr)\;.
\end{flalign*}
Using this fact we estimate as follows:
\begin{eqnarray*}
\mbox{(II)}_{1}&\le & 2\int_{\mathbb{R}^{n}\setminus B_{r_2}}\int_{B_{r_1}}\frac{\snr{Df(x)}^{p}}{\snr{x-y}^{n+\tau p}} \dx  \dy \le \frac{cr_2^{n+\tau p}}{(r-\rr)^{n+\tau p}}\int_{\mathbb{R}^{n}\setminus B_{r_2}}\int_{B_{r_1}}\frac{\snr{Df(x)}^{p}}{\snr{y}^{n+\tau p}}  \dx \dy \nonumber \\
&\le&\frac{cr_2^{n}}{\tau(r-\rr)^{n+\tau p}}\nr{Df}_{L^{p}(B_{r})}^{p}
\leq \frac{c(n,p)}{\tau(r-\rr)^{n+\tau p}}\nr{Df}_{L^{p}(B_{r})}^{p}\;,
\end{eqnarray*}
and, as before, but also using \rif{lata1}, we get
\begin{eqnarray*}
\nonumber \mbox{(II)}_{2}&\le &\frac{c}{(r-\rr)^{p}}\int_{\mathbb{R}^{n}\setminus B_{r_2}}\int_{B_{  r_1}}\frac{\snr{f(x)}^{p}}{\snr{x-y}^{n+\tau p}} \dx  \dy \\
&\le & \frac{cr_2^{n+\tau p}}{(r-\rr)^{n+p(1+\tau)}}\int_{\mathbb{R}^{n}\setminus B_{r_2}}\int_{B_{r_1}}\frac{\snr{f(x)}^{p}}{\snr{y}^{n+\tau p}} \dx  \dy 
\le\frac{c}{\tau (r-\rr)^{n+2 p}}\nr{f}^{p}_{L^{p}(B_{r})}\;,
\end{eqnarray*}
where, in  both inequalities, it is $c\equiv c (n,p)$.
Collecting the estimates found for the terms $(I)_1, (I)_2,(II)_1,(II)_2$, and recalling \rif{16}-\rif{17}, we conclude with
\eqn{viavia}
$$
\nr{\tilde f}_{W^{s,p}(\er^n)} \le \frac{c(n,p,s,r)}{(r-\rr)^{\kappa}}\nr{f}_{W^{s,p}(B_{r})}\;,
$$
for a constant $c,\kappa$ depending as in \rif{gn1}. This proves \rif{15}. Finally, we have
$$
\nr{f}_{W^{1,\tilde{p}}(B_{\rr})}\stackrel{\eqref{lata1}}{\le}\nr{\tilde{f}}_{W^{1,\tilde{p}}(\mathbb{R}^{n})}\stackrel{\eqref{13}}{\le} c\nr{\tilde{f}}^{\frac{s-1}{s}}_{L^{2t}(\mathbb{R}^{n})}\nr{\tilde{f}}_{W^{s,p}(\mathbb{R}^{n})}^{\frac 1s}
$$
from which \rif{gn1} follows using \rif{16}$_1$ and \rif{viavia}. 
\end{proof}

\subsection{Theorem \ref{main1}, case $p\geq 2$} 
{\em Step 1: Convergence}. We take a ball $B_R \Subset \Omega$ with $R\leq 1$, as in the statement of Theorem \ref{main1}, i.e., such that \rif{zerogap} holds. Fix $\beta \in (0, \alpha)$ arbitrarily. If we prove \rif{main-ine} whenever $\tilde p$ is such that $q < \tilde p < p+\beta$ we have finished. Moreover, it is sufficient to prove \rif{main-ine} for numbers of the form 
\eqn{23bis}
$$
q< \tilde{p}\equiv \tilde p (t):=\frac{2t(p+\beta)}{2t+\beta} \qquad \mbox{with $t \geq 2p$}
$$
since $\tilde p (t)\to p+\beta$ as $t\to \infty$. Therefore from now on we fix an arbitrary number $t$ satisfying \rif{23bis}. 
Notice that the first condition in \rif{23bis} implies
\eqn{23bis22} 
$$
q< p + \frac{\beta(2t-p)}{2t}\;.
$$
Now we combine and modify the approximation arguments considered in \cite{CKP} and \cite{sharp}. 
Proposition \ref{approssi2} and the lower bound in \rif{assF}$_1$ yield the existence of a sequence $\{\tilde{u}_{j}\} \subset W^{1,q}(B_{R},\RN)$ such that
\begin{flalign}\label{60}
\mathcal{F}(\tilde{u}_{j},B_{R})\to\mathcal{F}(u,B_{R}), \ \ \tilde{u}_{j}\rightharpoonup u \ \mbox{in} \ W^{1,p}(B_{R},\RN), \ \ \nr{\tilde{u}_{j}-u}_{L^{\infty}(B_{R})}\to0\;.
\end{flalign}
Let us define $u_{j}\in \tilde{u}_{j}+W^{1,q}_{0}(B_{R},\RN)\cap L^{2t}(B_{R},\RN)$ as the solution of the Dirichlet problem
\eqn{solva}
$$
 u_j\mapsto \min_{w \in \tilde{u}_{j}+W^{1,q}_{0}(B_{R},\RN)} \mathcal{F}_{j}(w,B_R)\;,
$$
where, denoting as usual
$
(\snr{w}^{2}-M^{2})_{+} = \max\{\snr{w}^{2}-M^{2}, 0\},
$
it is
\eqn{FJ}
$$
\mathcal{F}_{j}(w, B_R):=\int_{B_{R}}\left[F(x,Dw)+(\snr{w}^{2}-M^{2})_{+}^{t}\right]  \dx+\frac{\varepsilon_{j}}{q}\int_{B_{R}}(1+\snr{Dw}^{2})^{\frac{q}{2}}  \dx\;,
$$
with 
\begin{flalign}\label{24}
\varepsilon_{j}:=\left(1+j+\nr{D\tilde{u}_{j}}_{L^{q}(B_{R})}^{2q}\right)^{-1}
\end{flalign}
and
\begin{flalign}\label{23}
M:=2\nr{u}_{L^{\infty}(B_{R})}+1\;.
\end{flalign}
Notice that \eqref{24} guarantees 
\begin{flalign}\label{25}
\frac{\varepsilon_{j}}{q}\int_{B_{R}}(1+\snr{D\tilde{u}_{j}}^{2})^{\frac{q}{2}}  \dx\to0\;.
\end{flalign}
Notice also that the (unique) solvability of \rif{solva} follows by Direct Methods of the Calculus of Variations and convexity. 
As a consequence of \eqref{60}, there exists $\tilde{j} \in \en$ such that
\begin{flalign}\label{27}
\nr{\tilde{u}_{j}}_{L^{\infty}(B_{R})}\le 2\nr{u}_{L^{\infty}(B_{R})} \ \ \mbox{for} \ j\ge \tilde{j}\ge 1\;.
\end{flalign}
Up to relabelling the sequence $\{\tilde{u}_{j}\}$, we can take $\tilde{j}=1$. By minimality, \eqref{60}, $\eqref{23}$, \eqref{25} and \eqref{27} it is easy to see that
\begin{flalign}\label{26}
\limsup_{j\to \infty}\mathcal{F}_{j}(u_{j}, B_R)\le \limsup_{j\to \infty}\mathcal{F}_{j}(\tilde{u}_{j}, B_R)=\mathcal{F}(u,B_{R})\;.
\end{flalign}
Moreover, \eqref{26} and $\eqref{assF}_{1}$ yield that the sequence $\{Du_j\}$ is bounded in $L^p(B_R)$. 
Up to not relabelled subsequences, we then get that
\begin{flalign}\label{28}
u_{j}\rightharpoonup v \  \mbox{in}  \ W^{1,p}(B_{R},\RN) \ \ \mbox{for some} \ \ v\in u+W^{1,p}_{0}(B_{R},\RN)\;.
\end{flalign}
By weak lower semicontinuity we have that
%%%bozze \geq nella seconda riga
\begin{eqnarray*}
\nonumber \mathcal{F}(u,B_{R})&\stackrel{\eqref{26}}{\ge} &\liminf_{j\to \infty}\mathcal{F}_{j}(u_{j}, B_R)\\ 
&\geq &\liminf_{j\to \infty} \int_{B_{R}}\left[F(x,Du_j)+(\snr{u_j}^{2}-M^{2})_{+}^{t}\right]  \dx\\
&\stackrel{\eqref{28}} {\geq}&\int_{B_{R}}\left[F(x,Dv)+(\snr{v}^{2}-M^{2})_{+}^{t}\right]  \dx\ge \mathcal{F}(v,B_{R})\;.
\end{eqnarray*}
As $u-v \in W^{1,1}_0(B_R)$, minimality of $u$ yields 
$ \mathcal{F}(u,B_{R})\leq  \mathcal{F}(v,B_{R})$ and therefore $ \mathcal{F}(v,B_{R})=\mathcal{F}(u,B_{R})$. The strict convexity of $z\mapsto F(\cdot,z)$ (implied by \rif{assF}$_2$), then leads to 
$u=v$, so that again lower semicontinuity yields
$$
\mathcal{F}(u,B_{R})\le \limsup_{j\to \infty}\mathcal{F}(u_{j}, B_R)\leq\limsup_{j\to \infty}\mathcal{F}_{j}(u_{j}, B_R)\stackleq{26}  \mathcal{F}(u,B_{R})\;,
$$
and therefore we conclude with
$$
\lim_{j\to \infty}\int_{B_{R}}(\snr{u_{j}}^{2}-M^{2})_{+}^{t}  \dx=0 
$$
that, in turn, implies
\eqn{32}
$$\sup_{j\in \mathbb{N}}\, \nr{u_{j}}_{L^{p}(B_{R})}+  \sup_{j\in \mathbb{N}}\, \nr{u_{j}}_{L^{2t}(B_{R})}<c\left(n,t,\nr{u}_{L^{\infty}(B_{R})}\right)\;,$$
where we also used the explicit expression of $M$ reported in \eqref{23} and that $p\leq 2t$, $R\leq 1$.

{\em Step 2: A priori estimates.} We use the short notation 
$$
F_j(x,z) := F(x,z) + \frac{\varepsilon_{j}}{q}(1+\snr{z}^{2})^{\frac{q}{2}} \;.
$$
The Euler-Lagrange equation of the functional $\mathcal F_j$ in \rif{FJ} reads
\begin{flalign}\label{EL}
\int_{B_{R}}\left[\partial_z F_{j}(x,Du_{j})\cdot D\varphi+2t(\snr{u_{j}}^{2}-M^{2})_{+}^{t-1}u_{j}\cdot \varphi \right] \dx=0
\end{flalign}
and holds whenever $\varphi\in W^{1,q}_{0}(B_{R},\RN)\cap L^{2t}(B_{R},\RN)$ as $u_j \in W^{1,q}(B_R,\er^N)\cap L^{2t}(B_{R},\RN)$ and $F_j(\cdot)$ has $q$-growth conditions with respect to the gradient variable.  Notice that the integrands still $F_j(\cdot)$ satisfy  the following monotonicity inequality:
\eqn{usa1}
$$
 \left(\partial_z F_{j}(x,z_2)-\partial_z F_{j}(x,z_1)\right) \cdot (z_2-z_2) \geq \nu
(\lambda^{2}+\snr{z_2}^{2}+\snr{z_1}^{2})^{\frac{p-2}{2}}\snr{z_2-z_1}^{2}\;,
$$
for every $z_1, z_2 \in \er^{N\times n}$, where $c\equiv c (n,p,q)$. 
This is a straightforward consequence of the ellipticity assumption \rif{assF}$_2$ (see for instance \cite{KM3}). Now, fix $0<\rr\le \tau_{1}<\tau_{2} < R$ and set $\varphi:=\tau_{-h}(\eta^{2}\tau_{h}u_{j})$, which is admissible in \eqref{EL}, as we take
\eqn{iltaglio}
$$
\eta\in C^1_{\rm{c}}(B_{\frac{3\tau_{2}+\tau_{1}}{4}}), \ \ \mathds{1}_{B_{\left(\tau_{1}+\tau_{2}\right)/2}}\le \eta \le \mathds{1}_{B_{\frac{3\tau_{2}+\tau_{1}}{4}}}, \ \ \snr{D\eta}\lesssim \frac{1}{(\tau_{2}-\tau_{1})}
$$
and $h\in \mathbb{R}^{n}\setminus \{0\}$ is any fixed vector with $\snr{h}<\frac{\tau_{2}-\tau_{1}}{1024}\leq 1$. Testing \eqref{EL} with $\varphi$ and using the integration by parts formula for finite difference operators, we obtain
\begin{flalign*}
0=&\int_{B_{R}}\tau_{h}\left(\partial_z F_{j}(x,Du_{j})\right)\cdot (2\eta D\eta\otimes \tau_{h}u_{j}+\eta^{2}\tau_{h}Du_{j})  \dx\\
&\quad +2t\int_{B_{R}}\eta^{2}\tau_{h}((\snr{u_{j}}^{2}-M^{2})_{+}^{t-1}u_{j})\cdot\tau_{h}u_{j}  \dx=:\mbox{(I)}_{j}+\mbox{(II)}_{j}\;.
\end{flalign*}
For $(I)_{j}$, we decompose
\begin{eqnarray}
\notag &&\mbox{(I)}_{j}=
\\ 
\notag &&\int_{B_{R}}\left(\partial_z F_{j}(x+h,Du_{j}(x+h))-\partial_z F_{j}(x+h,Du_{j}(x))\right)\cdot (\eta^{2}\tau_{h}Du_{j}+2\eta D\eta\otimes \tau_{h}u_{j}) \dx\\
\notag 
&&\ \ + \int_{B_{R}}\left(\partial_z F_{j}(x+h,Du_{j}(x))-\partial_z F_{j}(x,Du_{j}(x))\right)\cdot(2\eta D\eta\otimes \tau_{h}u_{j}+\eta^{2}\tau_{h}Du_{j})  \dx\\
&& \quad \ \  =:\mbox{(I)}_{j}^{1}+\mbox{(I)}_{j}^{2}+\mbox{(I)}_{j}^{3}+\mbox{(I)}_{j}^{4}\;,\label{alfa1}
\end{eqnarray}
with obvious meaning of the notation. We have
%%%bozze where $c \equiv c (n,N,p,q)$ via
\eqn{alfa2}
$$
\mbox{(I)}_{j}^{1} \stackrel{\rif{usa1}}{\geq}  \nu \int_{B_{R}}\eta^{2}(\lambda^{2}+\snr{Du_{j}(x+h)}^{2}+\snr{Du_{j}(x)}^{2})^{\frac{p-2}{2}}\snr{\tau_{h}Du_{j}}^{2}  \dx$$
while, using also H\"older  inequality, we find
\begin{eqnarray}
&& 
\nonumber \snr{\mbox{(I)}_{j}^{2}} + \snr{\mbox{(I)}_{j}^{3}}
 \stackrel{\rif{crescitader}}{\leq}   c\int_{B_{R}}\eta(1+\snr{Du_{j}(x+h)}^{2}+\snr{Du_{j}(x)}^{2})^{\frac{q-1}{2}}\snr{D\eta}\snr{\tau_{h}u_{j}}  \dx\\
\nonumber
&& \qquad \leq   c\|D\eta\|_{L^\infty}\left(\int_{B_{R}}\eta(1+\snr{Du_{j}(x+h)}^{2}+\snr{Du_{j}(x)}^{2})^{\frac{q}{2}} \dx\right)^{\frac{q-1}{q}}
\left(\int_{B_{R}}\eta\snr{\tau_{h}u_{j}}^q  \dx\right)^{\frac{1}{q}}\\
&&\qquad \leq  c  \|D\eta\|_{L^\infty}|h|\int_{B_{R}}(1+\snr{Du_{j}}^{2})^{\frac{q}{2}} \dx \;, \label{alfa3}
\end{eqnarray}
where $c \equiv c (n,N,p,q)$. %%%bozze spostata la costante qui
Here we have used a standard property of finite difference operators, i.e., by \rif{iltaglio} and as $\snr{h}<\frac{\tau_{2}-\tau_{1}}{1024}$, it holds that
$$
\int_{B_{R}}\eta\snr{\tau_{h}u_{j}}^q  \dx \leq \int_{B_{(3\tau_{2}+\tau_{1})/4}}\snr{\tau_{h}u_{j}}^q  \dx \leq c|h|^q \int_{B_{R}}\snr{Du_{j}}^q \dx\;.
$$
Finally, we have
$$
\snr{\mbox{(I)}_{j}^{4}}
 \stackrel{\eqref{crescitader}_3}{\leq}   c|h|^\alpha\int_{B_{R}}(1+|Du|^2)^{\frac q2}  \dx\;,
$$
%%% see note 6
and in  both the last inequalities it is $c\equiv c (n,N,L, p,q)$. 
As for the term $\mbox{(II)}_{j}$, we have
\begin{eqnarray}
\mbox{(II)}_{j}&=&2t\int_{B_{R}}\eta^{2}\int_{0}^{1}\frac{d}{d\theta}\left((\snr{u_{j}+\theta\tau_{h}u_{j}}^{2}-M^{2})_{+}^{t-1}(u_{j}+\theta\tau_{h}u_{j})\right) \, \d \theta \cdot  \tau_{h}u_{j}  \dx\nonumber \\
&=&2t\int_{B_{R}}\eta^{2}\int_{0}^{1}\left(2(t-1)(\snr{u_{j}+\theta \tau_{h}u_{j}}^{2}-M^{2})^{t-2}_{+}(u_{j}+\theta \tau_{h}u_{j})\otimes(u_{j}+\theta \tau_{h}u_{j})\right.\nonumber \\
&&\qquad \qquad \qquad +\left.(\snr{u_{j}+\theta \tau_{h}u_{j}}^{2}-M^{2})_{+}^{t-1}\right) \, \d\theta \  \tau_{h}u_{j}\cdot\tau_{h}u_{j}  \dx\nonumber \\
&=&2t\int_{B_{R}}\eta^{2}\int_{0}^{1}2(t-1)(\snr{u_{j}+\theta \tau_{h}u_{j}}^{2}-M^{2})^{t-2}_{+}((u_{j}+\theta \tau_{h}u_{j})\cdot \tau_{h}u_{j})^{2} \, \d \theta  \dx\nonumber\\
&&+2t\int_{B_{R}}\eta^{2}\int_{0}^{1}(\snr{u_{j}+\theta \tau_{h}u_{j}}^{2}-M^{2})^{t-1}_{+} \, \d\theta \ \snr{\tau_{h}u_{j}}^{2}  \dx \ge 0\;.\label{alfa4}
\end{eqnarray}
Connecting the estimates in \rif{alfa1}-\rif{alfa4}, and again using \rif{iltaglio}, yields
\begin{align}
& \int_{B_{R}}\eta^{2}(\lambda^{2}+\snr{Du_{j}(x+h)}^{2}+\snr{Du_{j}(x)}^{2})^{\frac{p-2}{2}}\snr{\tau_{h}Du_{j}}^{2}  \dx \nonumber \\
 & \qquad \le \frac{c\snr{h}^{\alpha}}{\tau_{2}-\tau_{1}}\int_{B_{\tau_{2}}}(1+\snr{Du_{j}}^{2})^{\frac{q}{2}}   \dx\;,\label{33}
\end{align}
for $c\equiv c(n,N,\nu,L,p,q)$; we have used that $|h|\leq |h|^\alpha$ as it is $|h|\leq 1$. The last estimate is valid whenever $p>1$. As we are considering the case $p\geq 2$, \rif{33} implies
\eqn{formally}
$$
\int_{B_{\left(\tau_{1}+\tau_{2}\right)/2}}\snr{\tau_{h}Du_{j}}^{p}  \dx 
\le \frac{c\snr{h}^{\alpha}}{\tau_{2}-\tau_{1}}\int_{B_{\tau_{2}}}(1+\snr{Du_{j}}^{2})^{\frac{q}{2}}   \dx\;,
$$
for $c\equiv c(n,N,\nu,L,p,q)$ and this holds whenever $h\in \mathbb{R}^{n}\setminus \{0\}$ is such that $\snr{h}<\frac{\tau_{2}-\tau_{1}}{1024}$. The content of \rif{formally} allows to satisfy \rif{soddisfa}, and then \eqref{32} and Lemma \ref{l2} give that
$$
u_{j}\in W^{1+\beta/p,p}(B_{\left(\tau_{1}+\tau_{2}\right)/2},\RN)\cap L^{2t}(B_{R},\RN) \ \ \mbox{holds for all} \ \beta\in (0,\alpha)
$$
with
\begin{eqnarray}\label{36}
\nonumber \nr{u_{j}}_{W^{1+\beta/p,p}(B_{\left(\tau_{1}+\tau_{2}\right)/2},\er^N)}&\stackleq{SSS}& \nr{u_{j}}_{L^{p}(B_{\tau_{2}})}+\frac{c}{(\tau_{2}-\tau_{1})^{(n+\beta)/p}}\left(1+\nr{Du_{j}}_{L^{q}(B_{\tau_{2}})}^{q/p}
\right)\\
&\stackleq{32} &\frac{c}{(\tau_{2}-\tau_{1})^{(n+\beta)/p}}\left(1+\nr{Du_{j}}_{L^{q}(B_{\tau_{2}})}^{q/p}\right)
\;,
\end{eqnarray}
where $c\equiv c(n,N,\nu,L,p,q,\alpha,\beta,\|u\|_{L^{\infty}(B_{R})})$; we are again using that $R\leq 1$. We now use Lemma \ref{l3} with 
\eqn{pipipi}
$$s = 1+\frac{\beta}{p}\qquad \mbox{and}\qquad \tilde{p}=\frac{2t(p+\beta)}{2t+\beta} \ \  \mbox{(as taken in \rif{23bis})}\;,$$ thereby getting
\begin{eqnarray}
\notag \nr{Du_{j}}_{L^{\tilde{p}}(B_{\tau_{1}})}&\stackleq{gn1}&\frac{c}{(\tau_{2}-\tau_{1})^{\kappa}}\nr{u_{j}}_{L^{2t}(B_{\left(\tau_{1}+\tau_{2}\right)/2})}^{\frac{\beta}{p+\beta}}\nr{u_{j}}^{\frac{p}{p+\beta}}_{W^{1+\beta/p,p}(B_{(\tau_{1}+\tau_{2})/2})}\\
&\stackrel{\rif{32},\rif{36}}{\leq}&\frac{c}{(\tau_{2}-\tau_{1})^{\kappa+\frac{n+\beta}{p+\beta}}}\left(1+\nr{Du_{j}}_{L^{q}(B_{\tau_{2}})}^{\frac{q}{p+\beta}}
\right)
\label{dipet}
\end{eqnarray}
for $c\equiv c(n,N,\nu,L,p,q,\alpha,\beta,\|u\|_{L^{\infty}(B_{R})})$ and $ \kappa \equiv   \kappa(n,p,\beta,t)$. Notice that as $\tau_1$ and $\tau_2$ have been chosen arbitrarily, we have proved that
\eqn{finita}
$$
Du_j \in L^{\tilde p}(B_{\tau_1}) \qquad \mbox{for every $\tau_1 < R$}\;.
$$ 
Now we commute this into a uniform a priori estimate with respect to $j$. 
Thanks to the first inequality in $\eqref{23bis}$ we can interpolate with the inequality
\begin{flalign}\label{interp}
\nr{Du_{j}}_{L^{q}(B_{\tau_{2}})}\le \nr{Du_{j}}^{\tilde{\theta}}_{L^{\tilde{p}}(B_{\tau_{2}})}\nr{Du_{j}}^{1-\tilde{\theta}}_{L^{p}(B_{\tau_{2}})}\;, \quad  \qquad \frac{1}{q}=\frac{\tilde{\theta}}{\tilde{p}}+\frac{1-\tilde{\theta}}{p}\;, 
\end{flalign}
that is
\eqn{ttheta}
$$
\tilde \theta = \frac{(q-p)\tilde p}{(\tilde p-p)q}\in (0,1)\;.
$$
Plugging \rif{interp} in \rif{dipet} gives
\eqn{37}
$$
\nr{Du_{j}}_{L^{\tilde{p}}(B_{\tau_{1}})}\le \,\frac{c}{(\tau_{2}-\tau_{1})^{\kappa+\frac{n+\beta}{p+\beta}}} \left(1+\nr{Du_{j}}^{\frac{\tilde{\theta}q}{p+\beta}}_{L^{\tilde{p}}(B_{\tau_{2}})}\nr{Du_{j}}^{\frac{(1-\tilde{\theta})q}{p+\beta}}_{L^{p}(B_{\tau_{2}})}\right)\;,
$$
for $c\equiv c(n,N,\nu,L,p,q,\alpha,\beta,t, \|u\|_{L^{\infty}(B_{R})})$. 
Notice that \rif{ttheta} implies
$$
\frac{\tilde{\theta}q}{p+\beta}< 1 \Longleftrightarrow  q-p < \frac{\beta(2t-p)}{2t}\;,$$
and the last inequality is satisfied by \rif{23bis22}. 
Therefore applying Young inequality yields
\eqn{377}
$$
\nr{Du_{j}}_{L^{\tilde{p}}(B_{\tau_{1}})}\le  \frac{1}{2}\nr{Du_{j}}_{L^{\tilde{p}}(B_{\tau_{2}})}+\frac{c}{(\tau_{2}-\tau_{1})^{\kappa_1}}\left(1+\nr{Du_{j}}_{L^{p}(B_{R})}\right)^{p\kappa_2}
$$
for a constant $c$ depending as in \rif{37} and exponents $\kappa_1,\kappa_2\equiv \kk_1, \kk_2 (n,p,q,\alpha,\beta,t)>1$. 
This holds whenever $\varrho \leq \tau_1 < \tau_2 < R$. 
Inequality \rif{377} allows to apply Lemma \ref{l0} with the choice 
$\mathcal Z(\mathcal{l})\equiv \nr{Du_{j}}_{L^{\tilde{p}}(B_{\mathcal{l}})}$. This is by \rif{finita} a bounded function on every interval $[\varrho, R_*]$ whenever $\varrho < R_* <R$. We obtain
$$
\nr{Du_{j}}_{L^{\tilde{p}}(B_{\varrho})}\le \frac{c}{(R-\varrho)^{\kappa_1}}\left(1+\nr{Du_{j}}_{L^{p}(B_{R})}\right)^{p\kappa_2}
$$
where $c\equiv c(n,N,\nu,L,p,q,\alpha,\beta,t,\nr{u}_{L^{\infty}(B_{R})})$, and using \rif{assF}$_1$ we find
$$
\nr{Du_{j}}_{L^{\tilde{p}}(B_{\rr})}\le \frac{c}{(R-\rr)^{\kappa_{1}}}\left[1+\mathcal{F}_j(u_j, B_R)\right]^{\kappa_{2}}\;.
$$
Recalling \eqref{28} (and that $u=v$) and \rif{26}, letting $j \to \infty$ in the above display 
yields \rif{main-ine} via lower semicontinuity.

\subsection{Theorem \ref{main1}, case $1<p<2$} The proof largely proceeds as in the case $p\geq 2$ and we confine ourselves to describe the relevant modifications. We fix $\beta\in (0,\alpha)$ and prove \rif{main-ine} for $q < \tilde p < p+p\beta /2$. We this time take $\tilde p$ of the form  
\eqn{pipipi2}
$$
q<\tilde{p}:=\frac{2tp(2+\beta)}{4t+p\beta}%= \frac{4t(p+p\beta/2)}{4t+p\beta} > q\;.
$$
for $t \geq 2$ and observe that this implies 
\eqn{condi}
$$
q<p+ \frac{p\beta(2t-p)}{4t}\;.
$$
With this new choice of the number $t$ the proof proceeds as for the case $p\geq 2$, up to \rif{33}. 
As now it is $p<2$, H\"older inequality gives
\begin{align*}
\int_{B_{\left(\tau_{1}+\tau_{2}\right)/2}}\snr{\tau_{h}Du_{j}}^{p} \dx  \leq & \left(\int_{B_{\left(\tau_{1}+\tau_{2}\right)/2}}(\lambda^{2}+\snr{Du_{j}(x+h)}^{2}+\snr{Du_{j}(x)}^{2})^{\frac{p-2}{2}}\snr{\tau_{h}Du_{j}}^{2} \dx\right)^{\frac{p}{2}}
\\
&\ \  \cdot \left(\int_{B_{\left(\tau_{1}+\tau_{2}\right)/2}}(\lambda^{2}+\snr{Du_{j}(x+h)}^{2}+\snr{Du_{j}(x)}^{2})^{\frac{p}{2}} \dx\right)^{\frac{2-p}{2}}\;.
\end{align*}
%%%bozze theose
Notice that here, as well as in \rif{33}, we are using the standard and obvious convention to interpret all the quantities involving $Du_j$ as zero at those points where $\snr{Du_{j}(x+h)}=\snr{Du_{j}(x)}=0$; this remark is necessary only when $\lambda=0$. 
Using \rif{33} in the above inequality easily leads to 
$$
\int_{B_{\left(\tau_{1}+\tau_{2}\right)/2}}\snr{\tau_{h}Du_{j}}^{p}  \dx 
\le \frac{c\snr{h}^{p\alpha/2}}{(\tau_{2}-\tau_{1})^{p/2}}\int_{B_{\tau_{2}}}(1+\snr{Du_{j}}^{2})^{\frac{q}{2}}   \dx\;,
$$
which is formally analogous to \rif{formally}. Therefore, proceeding as for the case $p\geq 2$ and applying Lemma \ref{l2}, we get
\eqn{weget}
$$
\nr{u_{j}}_{W^{1+\beta/2,p}(B_{\left(\tau_{1}+\tau_{2}\right)/2})}\le \frac{c}{(\tau_{2}-\tau_{1})^{n/p+\beta/2}}\left(1+\nr{Du_{j}}_{L^{q}(B_{\tau_{2}})}^{q/p}
\right)\;.
$$
Again we apply Lemma \ref{13} with the new parameters $s = 1+\beta/2$ and $\tilde p$ in \rif{pipipi2} obtaining
\begin{eqnarray}
\notag \nr{Du_{j}}_{L^{\tilde{p}}(B_{\tau_{1}})}&\stackleq{gn1}&\frac{c}{(\tau_{2}-\tau_{1})^{\kappa}}\nr{u_{j}}_{L^{2t}(B_{\left(\tau_{1}+\tau_{2}\right)/2})}^{\frac{\beta}{2+\beta}}\nr{u_{j}}^{\frac{2}{2+\beta}}_{W^{1+\beta/2,p}(B_{(\tau_{1}+\tau_{2})/2})}\\
&\stackrel{\rif{32},\rif{weget}}{\leq}&\frac{c}{(\tau_{2}-\tau_{1})^{\kappa+ \frac{2n+p\beta}{p(2+\beta)}}}\left(1+\nr{Du_{j}}_{L^{q}(B_{\tau_{2}})}^{\frac{2q}{p(2+\beta)}}
\right)\;,
\label{dipet2}
\end{eqnarray}
for $c\equiv c(n,N,\nu,L,p,q,\alpha,\beta,\|u\|_{L^{\infty}(B_{R})})$ and $ \kappa \equiv   \kappa(n,p,\beta,t)$. 
We then interpolate exactly as in \rif{interp}-\rif{ttheta}; plugging \rif{interp} in the above inequality yields
\eqn{37bis}
$$
\nr{Du_{j}}_{L^{\tilde{p}}(B_{\tau_{1}})}\le \frac{c}{(\tau_{2}-\tau_{1})^{\kappa+ \frac{2n+p\beta}{p(2+\beta)}}}\left(1+\nr{Du_{j}}^{\frac{2\tilde{\theta}q}{p(2+\beta)}}_{L^{\tilde{p}}(B_{\tau_{2}})}\nr{Du_{j}}^{\frac{2(1-\tilde{\theta})q}{p(2+\beta)}}_{L^{p}(B_{\tau_{2}})}\right)\;,
$$
where $\tilde{\theta}$ is as in \rif{ttheta}, but with the new expression of $\tilde p$ defined in \rif{pipipi2}. We then observe that this time it is
$$
\frac{2\tilde{\theta}q}{p(2+\beta)}< 1 \Longleftrightarrow  q-p <\frac{p\beta(2t-p)}{4t}\;.
$$
The last inequality is the one in \rif{condi} and therefore we can proceed as after \rif{37} in the case $p\geq 2$. The proof of Theorem \ref{main1} is complete. 
\section{Proof of Theorem \ref{main3}}
\subsection{Step 1: Initial approximation}\label{apri} We immediately observe that, up to passing to the new integrand $F(\cdot)/\nu$, we can assume it is $\nu=1$ in \rif{assF2}. Indeed this new integrand satisfies assumptions \rif{assF2} with $\nu=1$ and $L$ replaced by $L/\nu$. Let $B_R\Subset \Omega $ be a ball as in the statement of Theorem \ref{main3}, i.e., $R\leq 1$ and \rif{zerogap2} holds; as in the statement, we also fix a concentric ball $B_{\varrho}\Subset B_R$. We consider a standard family of symmetric mollifiers $\{\phi_{\delta}\}_{\delta}$ for $\delta >0$ such that $\delta < \min\{\dist(B_R,\partial \Omega),1\}/8$, that is
\eqn{mollificatori}
$$\phi \in C^{\infty}_{\rm{c}}(B_1(0))\,,\quad \|\phi\|_{L^1} =1\,,\quad \phi_{\delta}(x)\equiv  \delta^{-n}\phi\left(x/\delta\right)\,,\quad B_{3/4}\subset \textnormal{supp}\, \phi \,.$$ 
Notice that $B_{R+\delta}\Subset \Omega$. We then define
\eqn{molli1}
$$
F_{\delta}(x,z):=(F*\phi_{\delta})(x,z)=\mint_{B_{1}}\mint_{B_{1}}F(x+\delta \tilde y,z+\delta y) \phi(\tilde{y})  \phi(y)\d\tilde{y} \dy 
\;,
$$
for all $(x,z)\in \overline{B_R}\times \er^n$. 
By the very definition in \rif{molli1} and \rif{holderF}, we have
\eqn{81} 
$$
F_{\delta}(x,z) \to F(x,z) \quad \mbox{uniformly on compact subsets of $\overline{B_R}\times \er^n$ as $\delta \to 0$}\;.
$$
We further define 
\eqn{molli2}
$$
h_{\delta}(x):=(h*\phi_{\delta})(x)=\mint_{B_{1}} h(x+\delta \tilde y)\phi(\tilde{y}) \d\tilde{y} \,, \quad \lambda_{\delta}:=\lambda+\delta\,, \quad H_{\delta}(z):=\lambda^{2}_{\delta}+\snr{z}^{2}\;,
$$
for $x \in \overline{B_R}$ and $z\in \er^n$. 
Next, we use assumption \rif{zerogap2}, that is
$\mathcal{L}_{b}^{q}(u,B_R)=0$ (see the definition in \rif{lav2}), and Proposition \ref{approssi1}, to get the existence of a sequence $\{\tilde{u}_{j}\} \subset W^{1,q}(B_{R})$ such that 
\begin{flalign}\label{73}
\tilde{u}_{j}\rightharpoonup u \ \mbox{in} \ W^{1,1}(\Omega,\er^N) \ \ \mbox{and} \ \ \mathcal{F}(\tilde{u}_{j},B_{R})\to \mathcal{F}(u,B_{R})\;.
\end{flalign}
We then set, for $(x, z) \in \overline{B_R}\times \er^n$,
\begin{flalign*}
F_{j,\delta}(x,z):=F_{\delta}(x,z)+\frac{\varepsilon_{j}}{q}(\lambda_\delta^2+\snr{z}^{2})^{\frac{q}{2}} \quad \mbox{and} \quad \mathcal{F}_{j,\delta}(w, B_R):=\int_{B_{R}}F_{j,\delta}(x,Dw)  \dx\;,
\end{flalign*}
where
\begin{flalign}\label{74}
\varepsilon_{j}:=\left(1+j+\nr{D\tilde{u}_{j}}^{2q}_{L^{q}(B_{R})}\right)^{-1} \Longrightarrow 
\frac{\varepsilon_{j}}{q}\int_{B_{R}}(\lambda^{2}+\snr{D\tilde{u}_{j}}^{2})^{\frac{q}{2}}  \dx\to0\;.
\end{flalign}
We moreover let 
\eqn{emme}
$$
m  := \frac {r}{r-2}\;.
$$
Using the definitions in \rif{mollificatori}, \rif{molli1} and \rif{molli2}, by convolution arguments (see Section \ref{arguments} below) we have that the integrand $F_{j,\delta}(\cdot)$ satisfies
\eqn{assF22}
$$
\left\{
\begin{array}{c}
\frac 1c[H_{\delta}(z)]^{\frac{2-\mu}{2}}+ \frac{\varepsilon_{j}}{q}[H_{\delta}(z)]^{\frac{q}{2}}\le F_{j,\delta}(x,z) \leq  cL[H_{\delta}(z)]^{\frac{q}{2}}+ cL[H_{\delta}(z)]^{\frac{2-\mu}{2}}\\ [8 pt]
\frac{1}{c}[H_{\delta}(z)]^{-\frac{\mu}{2}}+\frac{\eps_j}{c}[H_{\delta}(z)]^{\frac{q-2}{2}}\snr{\xi}^{2}\le \partial_{zz}F(x,z)\, \xi\cdot\xi\\ [8 pt]
\snr{\partial_{zz}F(x,z)}\le cL [H_{\delta}(z)]^{\frac{q-2}{2}}+  cL [H_{\delta}(z)]^{-\frac{\mu}{2}}\\ [6 pt]
\ \snr{\partial_{xz} F(x,z)}\le cL  h_{\delta}(x)[H_{\delta}(z)]^{\frac{q-1}{2}}+ cLh_{\delta}(x) [H_{\delta}(z)]^{\frac{1-\mu}{2}}  \\ [6 pt]
\snr{\partial_{xz}F_{\delta}(x,z)}\le cL\nr{h_{\delta}}_{L^{\infty}}[H_{\delta}(z)]^{\frac{q-1}{2}}\\[8pt]
\nr{h_{\delta}}_{L^{r}(B_R)}\le \nr{h}_{L^{r}(B_{R+\delta})}\;,
 \end{array}\right.
 $$
for every choice of $z, \xi\in \er^n$ and $x\in \overline{B_R}$, where $c  \equiv c   (n,\mu,q)\geq 1$. In the following we simply denote $\nr{h}_{L^{r}}\equiv \nr{h}_{L^{r}(B_{R+\delta})}$. 
By \rif{assF22}$_{1,2}$, Direct Methods and convexity we get that, for any $j,\delta$ as above, there exists a unique solution $u_{j,\delta}\in  \tilde{u}_{j}+W^{1,q}_{0}(B_{R})$ to the Dirichlet problem 
\eqn{pdmu}
$$
u_{j,\delta}\to \min_{w \in \tilde{u}_{j}+W^{1,q}_{0}(B_{R})} \mathcal{F}_{j,\delta}(w, B_R) \;.
$$
Thanks to \rif{assF22}$_1$ and as $u_{j,\delta}\in W^{1,q}_{0}(B_{R})$, the Euler-Lagrange equation of $ \mathcal{F}_{j,\delta}(\cdot)$ reads 
\begin{flalign}\label{88}
\int_{B_{R}}\partial_{z}F_{j,\delta}(x,Du_{j,\delta}) \cdot D\varphi  \dx=0 \ \ \mbox{for all} \ \varphi \in W^{1,q}_{0}(B_{R})\;.
\end{flalign}

\subsection{Step 2: Caccioppoli inequality}\label{step2} By \rif{assF22} and the smoothness implied by \rif{molli1}, classical regularity theory for non-degenerate equations with standard polynomial $q$-growth yields
\begin{flalign}\label{87}
u_{j,\delta}\in W^{1,\infty}_{\loc}(B_{R})\cap W^{2,2}_{\loc}(B_{R}) \quad \mbox{and}\quad
\partial F_{j,\delta}(\cdot, Du_{j,\delta}) \in W^{1,2}_{\loc}(B_R,\er^n)\;.
\end{flalign}
By virtue of \eqref{87}, we can differentiate equation \eqref{88} to obtain
\begin{flalign}\label{89}
\sum_{s=1}^{n}\int_{B_{R}}\left[\partial_{zz}F_{j,\delta}(x,Du_{j,\delta})DD_{s}u_{j,\delta}+\partial_{x_{s}z}F_{j,\delta}(x,Du_{j,\delta}) \right]\cdot D\varphi  \dx=0\;, \end{flalign}
for all $s \in \{1, \ldots, n\}$, which is valid whenever $\varphi\in W^{1,2}(B_{R})$ has compact support in $B_R$, again by \rif{87}. 
We select a cut-off function $\eta\in C^1_{\rm{c}}(B_{R})$ such that $0 \leq \eta \leq 1$. For every $s \in \{1, \ldots, n\}$, in \rif{89} we choose
\eqn{sceltatest}
$$\varphi\equiv \varphi_s
 :=\eta^{2}[H_{\delta}(Du_{j,\delta})]^{\gamma}D_{s}u_{j,\delta}\;.$$
This choice is again admissible by \rif{87} and it is
\begin{eqnarray*}
D\varphi_{s}&=&\eta^{2}\gamma [H_{\delta}(Du_{j,\delta})]^{\gamma-1}D_{s}u_{j,\delta}D(H_{\delta}(Du_{j,\delta}))+\eta^{2}[H_{\delta}(Du_{j,\delta})]^{\gamma}DD_{s}u_{j,\delta}\\ &&+2\eta [H_{\delta}(Du_{j,\delta})]^{\gamma}D_{s}u_{j,\delta}D\eta\;.
\end{eqnarray*}
We can rewrite \eqref{89} as
\eqn{tuttipezzi}
$$
0=\mbox{(I)}_{z}+\mbox{(II)}_{z}+\mbox{(III)}_{z}+\mbox{(I)}_{x}+\mbox{(II)}_{x}+\mbox{(III)}_{x}\;,
$$
where the terms indexed with $x$ denote the ones stemming from those in \eqref{89} containing $\partial_{xz}F$. 
Recalling that 
\eqn{derivataH}
$$
D \left[H_{\delta}(Du_{j,\delta})\right] = 2 \sum_{s=1}^{n} D_su_{j,\delta}DD_su_{j,\delta}\;,
$$ 
we have
\begin{eqnarray}
&&\nonumber \hspace{-3mm} \mbox{(I)}_{z}+\mbox{(II)}_{z}\\ &&:=\frac \gamma 2\int_{B_{R}}\eta^{2}\left( [H_{\delta}(Du_{j,\delta})]^{\gamma-1}\partial_{zz}F_{j,\delta}(x,Du_{j,\delta})D(H_{\delta}(Du_{j,\delta}))\cdot D(H_{\delta}(Du_{j,\delta}))  \right)\dx\nonumber \\
&&\qquad +\sum_{s=1}^{n}\int_{B_{R}}\eta^{2} [H_{\delta}(Du_{j,\delta})]^{\gamma}\partial_{zz}F_{j,\delta}(x,Du_{j,\delta})DD_{s}u_{j,\delta}\cdot DD_{s}u_{j,\delta} \dx\nonumber \\
&& \stackrel{\eqref{assF22}_2}{\geq} \frac{\gamma}{c}\int_{B_{R}}\eta^{2}[H_{\delta}(Du_{j,\delta})]^{\gamma-1-\frac{\mu}{2}}\snr{DH_{\delta}(Du_{j,\delta})}^{2}  \dx\nonumber \\
&& \qquad +\frac{1}{c}\int_{B_{R}}\eta^2[H_{\delta}(Du_{j,\delta})]^{\gamma-\frac{\mu}{2}}\snr{D^{2}u_{j,\delta}}^{2}  \dx\;.\label{sca1}
\end{eqnarray}
%%%bozze gives
Young and H\"older inequality and $\eqref{assF22}_{3}$, instead give, for any $\sigma \in (0,1)$
\begin{eqnarray}\label{sca2}
\left|\mbox{(III)}_{z}\right|&\leq &2\sum_{s=1}^{n}\int_{B_{R}}\left|\eta [H_{\delta}(Du_{j,\delta})]^{\gamma}  D_{s}u_{j,\delta} \partial_{zz}F_{j,\delta}(x,Du_{j,\delta})DD_{s}u_{j,\delta}\cdot D\eta\right| \dx\nonumber \\
&\stackrel{\eqref{assF22}_3}{\leq} &cL \int_{B_{R}}\eta \snr{D\eta}\left([H_{\delta}(Du_{j,\delta})]^{\gamma+\frac{q-1}{2}}+[H_{\delta}(Du_{j,\delta})]^{\gamma+\frac{1-\mu}{2}}\right)\snr{D^{2}u_{j,\delta}}  \dx\nonumber \\
&\le &\sigma\int_{B_{R}}\eta^{2}[H_{\delta}(Du_{j,\delta})]^{\gamma-\frac{\mu}{2}}\snr{D^{2}u_{j,\delta}}^{2}  \dx\nonumber \\
&&\quad +\frac{cL ^2}{\sigma}\int_{B_{R}}\snr{D\eta}^{2}\left([H_{\delta}(Du_{j,\delta})]^{\gamma+q-1+\frac{\mu}{2}}+[H_{\delta}(Du_{j,\delta})]^{\gamma+\frac{2-\mu}{2}}\right)  \dx\nonumber \\
&\stackrel{2-\mu\leq q}{\leq}  &\sigma\int_{B_{R}}\eta^{2}[H_{\delta}(Du_{j,\delta})]^{\gamma-\frac{\mu}{2}}\snr{D^{2}u_{j,\delta}}^{2}  \dx\nonumber \\
 & & + \frac{cL ^2}{\sigma}  \int_{B_{R}}\snr{D\eta}^{2}\left(1+[H_{\delta}(Du_{j,\delta})]^{\gamma+q-1+\frac{\mu}{2}}\right)  \dx\nonumber \\
 &\leq  &\sigma\int_{B_{R}}\eta^{2}[H_{\delta}(Du_{j,\delta})]^{\gamma-\frac{\mu}{2}}\snr{D^{2}u_{j,\delta}}^{2}  \dx\nonumber \\
 &&+\frac{cL ^2 R^{\frac{2n}{r}}}{\sigma}\left(\int_{B_{R}}\snr{D\eta}^{2m}\left(1+[H_{\delta}(Du_{j,\delta})]^{m\left(\gamma+q-\frac{2-\mu}{2}\right)}\right) \dx\right)^{1/m}\:,
\end{eqnarray}
where we have used the elementary inequality $(1+t)^m\leq 2^{m-1}(1+t^m)$ for $t\geq 0$.  
Concerning the terms involving $\mbox{(I)}_{x}, \mbox{(II)}_{x}$ and $\mbox{(III)}_{x}$,
we have, by using H\"older and Young inequalities, and recalling the estimation for $\mbox{(III)}_{z}$
\begin{eqnarray}\label{95}
\nonumber \left|\mbox{(I)}_{x}\right|&\leq&\gamma \sum_{s=1}^{N}\int_{B_R}\left|\eta^{2}[H_{\delta}(Du_{j\delta})]^{\gamma-1}D_{s}u_{j\delta}\partial_{x_{s}z}F_{j\delta}(x,Du_{j\delta})\cdot D(H_{\delta}(Du_{j\delta})) \right| \dx\\ &
\stackrel{\eqref{assF22}_4}{\leq} &c L  \gamma \int_{B_{R}}\eta^{2}h_{\delta}(x)\left([H_{\delta}(Du_{j,\delta})]^{\gamma+\frac{q-2}{2}}+[H_{\delta}(Du_{j,\delta})]^{\gamma-\frac{\mu}{2}}\right)\snr{D(H_{\delta}(Du_{j,\delta}))}  \dx\nonumber \\
 &\le &\sigma\gamma \int_{B_{R}}\eta^{2}[H_{\delta}(Du_{j,\delta})]^{\gamma-1-\frac{\mu}{2}}\snr{D(H_{\delta}(Du_{j,\delta}))}^{2}  \dx\nonumber \\
 & & +  \frac{c\gamma L ^2}{\sigma} \int_{B_{R}}\eta^{2}[h_\delta(x)]^2\left([H_{\delta}(Du_{j,\delta})]^{\gamma+q-1+\frac{\mu}{2}}+[H_{\delta}(Du_{j,\delta})]^{\gamma+1-\frac{\mu}{2}}\right)  \dx\nonumber \\
&\stackrel{\eqref{assF22}_6}{\leq} &\sigma\gamma \int_{B_{R}}\eta^{2}[H_{\delta}(Du_{j,\delta})]^{\gamma-1-\frac{\mu}{2}}\snr{D(H_{\delta}(Du_{j,\delta}))}^{2}  \dx\nonumber \\
&&+ \frac{c\gamma L ^2}{\sigma}\nr{h}_{L^{r}}^{2}\left(\int_{B_{R}}\eta^{2m}\left(1+[H_{\delta}(Du_{j,\delta})]^{m\left(\gamma+q-\frac{2-\mu}{2}\right)}\right) \dx\right)^{1/m}\:.
\end{eqnarray}
Similarly, we have
\begin{eqnarray}
\left|\mbox{(II)}_{x}\right|&\leq &\sum_{s=1}^{n}\int_{B_{R}}\left|\eta^{2}[H_{\delta}(Du_{j,\delta})]^{\gamma}\partial_{x_{s}z}F_{j,\delta}(x,Du_{j,\delta})\cdot DD_{s}u_{j,\delta} \right| \dx\nonumber \\
&\stackrel{\eqref{assF22}_4}{\leq} &cL \int_{B_{R}}\eta^{2}h_{\delta}(x)\left([H_{\delta}(Du_{j,\delta})]^{\gamma+\frac{q-1}{2}}+[H_{\delta}(Du_{j,\delta})]^{\gamma+\frac{1-\mu}{2}}\right)\snr{D^{2}u_{j,\delta}}  \dx\nonumber  \\
&\le &\sigma \int_{B_{R}}\eta^{2}[H(Du_{j,\delta})]^{\gamma-\frac{\mu}{2}}\snr{D^{2}u_{j,\delta}}^{2}  \dx\nonumber \\
&&+ \frac{cL ^2}{\sigma}\int_{B_{R}}\eta^{2}
\left([H_{\delta}(Du_{j,\delta})]^{\gamma+q-1+\frac{\mu}{2}}+[H_{\delta}(Du_{j,\delta})]^{\gamma+1-\frac{\mu}{2}}\right)  \dx \nonumber \\
&\le &\sigma \int_{B_{R}}\eta^{2}[H(Du_{j,\delta})]^{\gamma-\frac{\mu}{2}}\snr{D^{2}u_{j,\delta}}^{2}  \dx\nonumber \\
&&+ \frac{cL ^2}{\sigma}\nr{h}^{2}_{L^{r}}\left(\int_{B_{R}}\eta^{2m}\left(1+[H_{\delta}(Du_{j,\delta})]^{m\left(\gamma+q-\frac{2-\mu}{2}\right)}\right)  \dx \right)^{1/m}\,,\label{94}
\end{eqnarray}
and using that $(2-\mu)/2 \leq q/2\leq q-1+\mu/2$, we have
%%%bozze esponente nella seconda riga
\begin{eqnarray}\label{96}
\left|\mbox{(III)}_{x}\right|&\leq &2\sum_{s=1}^{n}\int_{B_{R}}\left|\eta [H_{\delta}(Du_{j,\delta})]^{\gamma}D_su_{j,\delta} \partial_{xz}F(x,Du_{j,\delta})\cdot D\eta  \right| \dx\nonumber \\
&\stackrel{\eqref{assF22}_4}{\leq} &cL  \int_{B_{R}}\eta \snr{D\eta}h_{\delta}(x)\left([H_{\delta}(Du_{j,\delta})]^{\gamma+\frac{q}{2}}
+[H_{\delta}(Du_{j,\delta})]^{\gamma+\frac{2-\mu}{2}}\right)  \dx\nonumber \\
&\le &cL  \int_{B_{R}}\left(\eta^2+ \snr{D\eta}^2\right)h_{\delta}(x)\left(1+[H_{\delta}(Du_{j,\delta})]^{\gamma+q-1+\frac{\mu}{2}}
\right)  \dx\nonumber \\
&\le &cL^2  \nr{h}^{2}_{L^{r}}\left(\int_{B_{R}}(\eta^{2m}+\snr{D\eta}^{2m})\left(1+[H_{\delta}(Du_{j,\delta})]^{m\left(\gamma+q-\frac{2-\mu}{2}\right)}\right)  \dx\right)^{1/m}\;.
\end{eqnarray}
We have used that $L\geq 1$. Choosing $\sigma\equiv \sigma (n,\mu,q)$ sufficiently small in order to reabsorb terms, and merging estimates \eqref{sca1}-\eqref{96} with \rif{tuttipezzi}, we obtain
\begin{eqnarray}%\label{97}[H_{\delta}(Du_{j,\delta})]^{m\left(\gamma+q-\frac{2-\mu}{2}\right)}
&&\label{9800}\qquad   \gamma\int_{B_{R}}\eta^{2}[H_{\delta}(Du_{j,\delta})]^{\gamma-1-\frac{\mu}{2}}\snr{DH_{\delta}(Du_{j,\delta})}^{2}  \dx +\int_{B_{R}}\eta^2[H_{\delta}(Du_{j,\delta})]^{\gamma-\frac{\mu}{2}}\snr{D^{2}u_{j,\delta}}^{2}  \dx \\
&&\hspace{-1mm} \le c\left[(1+\nr{h}_{L^{r}})L\right]^2(1+\gamma)\left(\int_{B_{R}}(\eta^{2m}+\snr{D\eta}^{2m})\left(1+[H_{\delta}(Du_{j,\delta})]^{m\left(\gamma+q-\frac{2-\mu}{2}\right)}\right)  \dx\right)^{\frac{1}{m}}\nonumber
\end{eqnarray}
for $c \equiv c(n,\mu,q)$. By Sobolev embedding theorem, recalling that $\mu <2$ by \eqref{pq}, we have (using the elementary inequality $1 + t^{2^*}\leq (1+ t)^{2^*}$ for $t \geq 0$ and \rif{derivataH})
\begin{eqnarray}
\nonumber &&\hspace{-7mm} \left(\int_{B_{R}}\eta^{2^{*}}\left(1+[H_{\delta}(Du_{j,\delta})]^{\left(\gamma+\frac{2-\mu}{2}\right)\frac{2^{*}}{2}}\right)   \dx \right)^{2/2^{*}} \\&& \nonumber \leq  \left(\int_{B_{R}}\eta^{2^{*}}(1+[H_{\delta}(Du_{j,\delta})]^{\frac \gamma 2 +\frac{2-\mu}{4}})^{2^{*}}   \dx \right)^{2/2^{*}}\\
\nonumber &&   \le c\tilde R\int_{B_{R}}\left |D\left(\eta\left(1+[H_{\delta}(Du_{j,\delta})]^{\frac \gamma 2 +\frac{2-\mu}{4}}\right)\right) \right |^{2}  \dx\nonumber \\
\nonumber &&   \le c\tilde R\int_{B_{R}}\snr{D\eta}^{2}\left(1+[H_{\delta}(Du_{j,\delta})]^{\gamma+\frac{2-\mu}{2}}\right)  \dx +c\tilde R\int_{B_{R}}\eta^{2}|D[H_{\delta}(Du_{j,\delta})]^{\frac{\gamma}{2}+\frac{2-\mu}{4}}|^{2}  \dx \nonumber \\
&&    \le  c\tilde R\left(\int_{B_{R}}\snr{D\eta}^{2m}\left(1+[H_{\delta}(Du_{j,\delta})]^{m\left(\gamma+q-\frac{2-\mu}{2}\right)}\right)  \dx\right)^{1/m}\nonumber \\
&& \quad +  c\gamma^2\tilde R\int_{B_{R}}\eta^{2}[H_{\delta}(Du_{j,\delta})]^{\gamma-1-\frac{\mu}{2}}\snr{DH_{\delta}(Du_{j,\delta})}^{2}  \dx \nonumber \\
&& \quad \quad 
+c \tilde R \int_{B_{R}}\eta^2[H_{\delta}(Du_{j,\delta})]^{\gamma-\frac{\mu}{2}}\snr{D^{2}u_{j,\delta}}^{2}  \dx
\label{emby}
\end{eqnarray}
for $c\equiv c(n,\mu,q)$. Here, we are denoting 
\eqn{immergi}
$$
2^{*} = \left\{ \begin{array}{ccc}
\frac{2n}{n-2} &\mbox{if} & n>2\\[10 pt]
\mbox{any number strictly larger than $4m$} &\mbox{if} & n=2
\end{array}
\right.
$$
and 
\eqn{tilder}
$$
\tilde R := R^{\frac{2n}{2^*}-n+2} \Longrightarrow\  \mbox{$\tilde R =1$ if $n>2$} \;.
$$
Using \rif{9800} to estimate the last two terms in display \rif{emby}, we conclude with the following basic reverse H\"older inequality:
\begin{eqnarray}\label{0}
&&\quad \left(\int_{B_{R}}\eta^{2^{*}} \left(1+[H_{\delta}(Du_{j,\delta})]^{\left(\gamma+\frac{2-\mu}{2}\right)\frac{2^{*}}{2}}\right)  \dx \right)^{\frac{2}{2^{*}}} \\
&&   \le c[(1+\nr{h}_{L^{r}})L(1+\gamma)]^2\tilde R \left(\int_{B_{R}}(\eta^{2m}+\snr{D\eta}^{2m})\left(1+[H_{\delta}(Du_{j,\delta})]^{m\left(\gamma+q-\frac{2-\mu}{2}\right)}\right)  \dx\right)^{\frac{1}{m}}\nonumber 
\end{eqnarray}
for $c \equiv c(n,\mu,q)$. 

\subsection{Step  3: Modified Moser's iteration} \label{moser1} We inductively define the exponents
$$
\gamma_{1}:=0, \quad \gamma_{k +1}:=\frac{1}{m}\left[\left(\gamma_{k}+\frac{2-\mu}{2}\right)\frac{2^{*}}{2}-\frac{2-\mu}{2}\right], \quad \alpha_{k }:=m\gamma_{k}+\frac{2-\mu}{2}\;,
$$
for every integer $k\geq 1$, where $m$ has been introduced in \rif{emme}. It follows that
\begin{flalign}\label{1}
\alpha_{k +1}=\left(\gamma_{k }+\frac{2-\mu}{2}\right)\frac{2^{*}}{2}=\chi \alpha_{k }+\tau\qquad \mbox{for every $k\geq 1$}\;,
\end{flalign}
where it is
\begin{flalign}\label{7}
\chi:=\frac{2^{*}}{2m}\stackrel{r>n}{>}1 \ \ \mbox{and} \ \ \tau:=\frac{2^{*}\alpha_1}{r}=\frac{2^{*}(2-\mu)}{2r}\stackrel{2>\mu}{>}0\;.
\end{flalign}
As a consequence of \eqref{1}, by induction we have that the identities 
\eqn{2}
$$
\alpha_{k +1}=\chi^{k }\alpha_{1}+\tau\sum_{i=0}^{k -1}\chi^{i} \quad \mbox{and therefore} \quad  \gamma_{k +1}= \frac{\alpha_1}{m}\left(\chi^{k }-1\right) +
\frac{\tau}{m}\sum_{i=0}^{k -1}\chi^{i}
$$
hold for every integer $k\geq 1$. 
Being $\chi>1$, it is 
$
\alpha_{k +1}> \alpha_{k }$ for every $k \in \mathbb{N}$.
For later use, we record the elementary estimation %%%see Note 1
\eqn{elegamma}
$$
%\gamma_{k +1}\leq \frac{\alpha_1}{m}(\chi^k-1) \left(1+\frac{2\chi m}{\chi-1}\right)\leq c(n,\mu,r) \chi^{k+1}\qquad \mbox{for every $k\geq 1$}\;.
\gamma_{k +1}\leq \frac{2\alpha_1}{\chi-1}\chi^{k+1}= \frac{2-\mu}{\chi-1}\chi^{k+1}\qquad \mbox{for every $k\geq 1$}\;.
$$
In the following all the balls considered will be concentric to $B_R$. We abbreviate as 
\eqn{espressione}
$$
M_{j,\delta}(\mathcal{l}):=\nr{H_{\delta}(Du_{j,\delta})}_{L^{\infty}(B_{\mathcal{l}})} \quad  \mbox{for $\mathcal{l}\in (0,R)$}\;.
$$
By \rif{87} this function is bounded on every interval $[\varrho, R_*]$, whenever $\varrho <R_*< R$. 
For $0<\rr\le \tau_1 < \tau_2<  R$, we consider a sequence $\{B_{\varrho_k}\}$ of shirking balls, where $\rr_{k }:=\tau_1+(\tau_2-\tau_1)2^{-k +1}$. Notice that $\{\varrho_k\}$ is a decreasing sequence such that $\varrho_1=\tau_2$ and $\varrho_k\to \tau_1$; therefore it is $\cap_k B_{\varrho_k} =B_{\tau_1}$ and $B_{\varrho_1}= B_{\tau_2}$. Accordingly, we fix corresponding cut-off functions $\eta_{k }\in C^1_{\rm{c}}(B_{R})$ with
%\eqn{lecut}
$$
 \mathds{1}_{B_{\rr_{k +1}}}\le \eta\le  \mathds{1}_{B_{\rr_{k }}} \ \ \mbox{and} \ \ \snr{D\eta_{k }}\lesssim \frac{1}{(\rr_{k }-\rr_{k +1})}\approx \frac{2^{k }}{\tau_2-\tau_1}\;.
$$
We choose $\eta \equiv \eta_k$ in \eqref{0} and manipulate as to obtain, with the above notation %%See Note 2
\begin{eqnarray}\label{3}
&& \quad \int_{B_{\rr_{k +1}}}\left(1+[H_{\delta}(Du_{j,\delta})]^{\alpha_{k +1}}\right)  \dx\\
 &&\qquad\le  \left[ \frac{c_h 2^{k }(1+ \gamma_{k })}{\tau_2-\tau_1}\right]^{2^{*}} \left(1+
[M_{j,\delta}(\tau_2)]^{\frac{2^{*}\sigma}{2}}\right)\left(\int_{B_{\rr_{k }}}\left(1+[H_{\delta}(Du_{j,\delta})]^{\alpha_{k }}\right)  \dx\right)^{\chi} \nonumber
\end{eqnarray}
where (recall that $q \geq 2-\mu$) it is
\eqn{ilsigma}
$$\sigma:=q-\frac{2-\mu}{2}-\frac{2-\mu}{2m}= q-\alpha_1-\frac{\alpha_1}{m}\geq 0\;.$$ 
As for $c_h$, with this symbol we denote a fixed number of the form 
\eqn{ch}
$$c_h \equiv \tilde c(n,\mu,q)\left(1+\nr{h}_{L^{r}(B_{R+\delta})}\right)L\;.$$ 
In the following we shall emphasize, in describing the dependence of the various constants on $\mu$ and $\chi$, the asymptotic behaviour for $\mu\to 2$ and $\chi \to 1$. For $k \in \mathbb{N}$, we define
\eqn{gli}
$$
A_{k }:=\left(\int_{B_{\rr_{k }}}\left(1+[H_{\delta}(Du_{j,\delta})]^{\alpha_{k }}\right)  \dx\right)^{1/\alpha_{k }}\,, \qquad \tilde \gamma_{k} := 1 +\gamma_{k} 
$$
%B_{k }&:=& \left(c\tilde \gamma_{k} + \frac{c 2^{k }\tilde \gamma_{k }}{\tau_2-\tau_1}\right)^{2^{*}}
thus \eqref{3} becomes
%\eqn{4}
$$
A_{k +1}\le \left( \frac{c_h 2^{k }\tilde \gamma_{k }}{\tau_2-\tau_1}\right)
^{\frac{2^{*}}{\alpha_{k +1}}}\left(1+[M_{j,\delta}(\tau_2)]^{\frac{2^{*}\sigma}{2}}\right)^{\frac{1}{\alpha_{k +1}}}A_{k }^{\frac{\chi\alpha_{k }}{\alpha_{k +1}}}\;.
$$
Iterating the last inequality gives  %%%% see note 3
$$
A_{k +1}\le \prod_{i=0}^{k -1}\left(\frac{c_h2^{k -i}\tilde \gamma_{k -i}}{\tau_2-\tau_1}\right)^{\frac{2^{*}\chi^{i}}{\alpha_{k +1}}}\left(1+[M_{j,\delta}(\tau_2)]^{\frac{2^{*}\sigma}{2}}\right)^{\frac{1}{\alpha_{k +1}}\sum_{i=0}^{k -1}\chi^{i}}A_{1}^{\frac{\chi^{k }\alpha_{1}}{\alpha_{k +1}}}\;,
$$
for every $k\geq 1$, 
%%%see note 4 
and, noticing that 
\eqn{8}
$$
\lim_{k \to \infty}\frac{1}{\alpha_{k +1}}\sum_{i=0}^{k -1}\chi^{i}\stackrel{\rif{2}}{=}\frac{1}{(\chi-1)\alpha_{1}+\tau}  \leq  \frac{2}{(\chi-1)\alpha_1}=  \frac{2}{(2-\mu)(\chi-1)}\;,
$$
we can further bound as %%%see note 5
\eqn{77}
$$
A_{k +1}\leq 4^{\frac{1}{(2-\mu)(\chi-1)}}\prod_{i=0}^{k -1}\left(\frac{c_h2^{k -i}\tilde \gamma_{k -i}}{\tau_2-\tau_1}\right)^{\frac{2^{*}\chi^{i}}{\alpha_{k +1}}}\left(1+[M_{j,\delta}(\tau_2)]^{\theta}\right)A_{1}^{\frac{\chi^{k }\alpha_{1}}{\alpha_{k +1}}}\;.
$$
Here we have denoted 
\eqn{iltheta}
$$
\theta:=\frac{2^{*}\sigma}{2\left[(\chi-1)\alpha_{1}+\tau\right]}
=\frac{\frac{\sigma}{\alpha_1}\frac{2^*}{2}}{\left(\frac 1m + \frac 2r\right)\frac{2^*}{2}-1}\stackrel{\rif{7}}{=} 
\frac{\chi m\sigma}{(\chi-1)\alpha_{1}+\tau}
\;.
$$
Using \rif{2} we next compute
\eqn{euna00}
$$
\lim_{k \to \infty}\frac{\chi^{k }\alpha_{1}}{\alpha_{k +1}}=\frac{(\chi-1)\alpha_{1}}{(\chi-1)\alpha_{1}+\tau}\;,
$$
\eqn{euna}
$$
\frac{2^{*}}{\alpha_{k +1}}\sum_{i=0}^{k -1}\chi^{i}(k -i) \leq \frac{2^{*}}{\alpha_1}\sum_{i=0}^{k -1}\frac{k -i}{\chi^{k -i}}\leq  \frac{c(\chi)}{(2-\mu)(\chi-1)^2}
$$
%%%  see Note 5
and therefore (recalling that $\tilde \gamma_1=1$) we also have, using \rif{2} and \rif{elegamma}
 \begin{eqnarray*}
 \prod_{i=0}^{k -1}\tilde{\gamma}_{k -i}^{\frac{2^{*}\chi^{i}}{\alpha_{k +1}}} &=& \prod_{i=0}^{k -2}\tilde{\gamma}_{k -i}^{\frac{2^{*}\chi^{i}}{\alpha_{k +1}}}= \exp \left(\frac{2^*}{\alpha_{k +1}}\sum_{i=0}^{k -2}
\chi^i  \log \tilde \gamma_{k-i}\right)\\
& \leq &
 \exp \left[\frac{2^*}{2-\mu}\sum_{i=0}^{k -2}
\frac{\log (e+ \chi^{k-i})}{\chi^{k-i}} + \frac{2^*}{2-\mu}\log \left(e+ \frac{2-\mu}{\chi-1}\right)\sum_{i=0}^{k -2}
\frac{1}{\chi^{k-i}}\right] \\
 &\leq &
 \exp \left[\frac{c(\chi)}{2-\mu}\sum_{i=0}^{k -2}
\frac{k-i}{\chi^{k-i}} + \frac{c(\chi, \mu)}{(2-\mu)(\chi-1)}\sum_{i=0}^{k -2}
\frac{1}{\chi^{k-i}}\right]\\ & \leq &
 \exp \left[\frac{c(\mu, \chi)}{(2-\mu)(\chi-1)^2} \right] 
 \end{eqnarray*}
 so that we finally conclude with
 \begin{eqnarray}
\nonumber\prod_{i=0}^{k -1}\left(\frac{c_h2^{k -i}\tilde \gamma_{k -i}}{\tau_2-\tau_1}\right)^{\frac{2^{*}\chi^{i}}{\alpha_{k +1}}}&
 \le & 2^{\frac{2^{*}}{\alpha_{k +1}}\sum_{i=0}^{k -1}\chi^{i}(k -i)}\prod_{i=0}^{k -1}\tilde{\gamma}_{k -i}^{\frac{2^{*}\chi^{i}}{\alpha_{k +1}}}\left(\frac{c_h}{\tau_2-\tau_1}\right)^{\frac{2^{*}}{\alpha_{k +1}}\sum_{i=0}^{k -1}\chi^{i}}
\\
& \stackleq{8} & 4^{\frac{c (\mu, \chi)}{(2-\mu)(\chi-1)^2}}\left(\frac{c_h}{\tau_2-\tau_1}\right)^{\frac{2\chi m}{(\chi-1)\alpha_{1}+\tau}}\;.\label{plug}
\end{eqnarray}
Plugging \rif{plug} in \rif{77}, letting $k  \to \infty$ there, and taking \rif{euna00}-\rif{euna} into account, we find, after an elementary estimation 
\eqn{77bis}
$$
M_{j,\delta}(\tau_1)\le 4^{\frac{c (\mu, \chi)}{(2-\mu)(\chi-1)^2}}\left(\frac{c_h}{\tau_2-\tau_1}\right)^{\frac{2\chi m}{(\chi-1)\alpha_{1}+\tau}}\left(1+[M_{j,\delta}(\tau_2)]^{\theta}\right)A_{1}^{\frac{(\chi-1)\alpha_{1}}{(\chi-1)\alpha_{1}+\tau}}\;, 
$$
where the constant $c (\mu, \chi)$ remains bounded for $\chi\to 1$ and $\mu \to 2$ and $c_h$ has been defined in \rif{ch}.   
Now we concentrate on the case $n>2$; using the expressions in \rif{ilsigma} and \rif{iltheta}, we notice that $\theta <1$ if and only if the last condition assumed in \rif{pq} is satisfied. Therefore we can apply Young  inequality with conjugate exponents $1/\theta$ and $1/(1-\theta)$ in \rif{77bis}; this yields
$$
\nonumber M_{j,\delta}(\tau_1)\le  \frac{1}{2}M_{j,\delta}(\tau_2)+ c\left(\frac{L+L\|h\|_{L^{r}(B_{R+\delta})}}{\tau_2-\tau_1}\right)^{2\kk_1}\left(1+A_1\right)^{2\alpha_1\kk_{2}}\;,
$$
where $c,\kk_1, \kk_2\equiv c,\kk_1, \kk_2(n,\mu,q,r)$, and we have restored the full notation from \rif{ch}.  
Recalling that $\varrho \leq \tau_1 < \tau_2 < R$, and the expressions in \rif{espressione}, and \rif{gli}, Lemma \ref{l0} applied to $\mathcal{Z}(\mathcal{l})\equiv \nonumber M_{j,\delta}(\mathcal{l})$ (which is bounded on every interval $[\varrho, R_*]$, $R_*<R$, by \rif{87}) yields 
\eqn{pre98bis}
$$
\nr{H_{\delta}(Du_{j,\delta})}_{L^{\infty}(B_{\rr})}\le c\left(\frac{L+L\|h\|_{L^{r}(B_{R+\delta})}}{R-\varrho}\right)^{2\kk_1}\left(1+\int_{B_{R}}[H_{\delta}(Du_{j,\delta})]^{\frac{2-\mu}{2}} \dx\right)^{2\kappa_2}\;.
$$
Eventually, using \rif{assF22}$_{1}$ in the last estimate we obtain the desired a priori bound, i.e., 
\begin{flalign}\label{98bis}
\nr{H_{\delta}(Du_{j,\delta})}_{L^{\infty}(B_{\rr})}\le c\left(\frac{L+L\|h\|_{L^{r}(B_{R+\delta})}}{R-\rr}\right)^{2\kappa_1}\left[1+\mathcal{F}_{j,\delta}(u_{j,\delta},B_{R})\right]^{2\kappa_2}\;,
\end{flalign}
where $c,\kk_1, \kk_2\equiv c,\kk_1, \kk_2(n,\mu,q,r)$, and which is now established for the case $n>2$. 
It remains to treat the case $n=2$. For this, recalling \rif{iltheta}, we notice that
%%%bozze
$$
\lim_{2^* \to \infty} \theta = \frac{\sigma}{\alpha_1(1/m+2/r)} = \frac{\sigma}{\alpha_1}
$$
while, using \rif{ilsigma}, we find
$$
 \frac{\sigma}{\alpha_1}< 1 \Longleftrightarrow  \frac{q}{2-\mu}<1+\frac{r-2}{2r}\;,
$$
which is the bound assumed in \rif{pq} for $n=2$. We can therefore take $2^*$ large enough (recall the definition in \rif{immergi}) in order to have $\theta<1$ once again and proceed as in the case $n>2$ after \rif{iltheta}. We again conclude with \rif{98bis} for different values of the exponents $\kk_1, \kk_2$. 

\begin{remark}\label{esp1}   \emph{A careful check of the above proof shows that the constant $c$ appearing in \rif{98bis}
blows-up when $r \to n$ (this implies $\chi \to 1$), $\mu\to 2$. This constant remains instead stable when $r \to \infty$. The exponents $\kappa_1$ and $\kappa_2$ in \rif{98bis} turn out to be
\eqn{iduekappa}
$$
\kk_1= \frac{\chi m}{(\chi-1)\alpha_1+\tau-m\chi\sigma} \quad \mbox{and} \quad
\kk_2= \frac{\chi-1}{2\left[(\chi-1)\alpha_1+\tau-m\chi\sigma\right]}\;,
$$
respectively. 
Compare with Remark \ref{esp2} below. As a matter of fact the above proof perfectly works in the case $r = \infty$, that means, with the above notation, $m=1$. This is the case of Lipschitz continuous coefficients, revisited in Section \ref{moser2} below.
}
\end{remark}

\subsection{Step 4: Passage to the limit and conclusion.}\label{chiudi} 
We consider a sequence of numbers $\{\delta_k\}$ such that $\delta_k \to 0$ and we take $\delta\equiv  \delta_k$ in \rif{molli1}. In fact, we keep using the notation $\delta\equiv  \delta_k$. Moreover, we denote $\delta \to 0$ for $k \to \infty$. We shall several times pass to subsequences, still denoted by $\delta$. We now fix an arbitrary index $j \in \en$ and a concentric ball $B_{\varrho}\Subset B_R$ as in the statement of Theorem \ref{main3}. We have
\begin{eqnarray}
\nonumber  \frac{\varepsilon_{j}}{q}\int_{B_{R}}(\lambda^{2}_{\delta}+\snr{Du_{j,\delta}}^{2})^{\frac{q}{2}}  \dx &\stackrel{\eqref{assF22}_1} {\leq}&  \mathcal{F}_{j,\delta}(u_{j,\delta}, B_R)\\ \nonumber &\stackleq{pdmu}& \mathcal{F}_{j,\delta}(\tilde{u}_{j},B_R)\\ \nonumber &\stackleq{74} &
\int_{B_R}F_{\delta}(x, D\tilde  u_j) \dx+\oo(j)\\ \nonumber  &= & \mathcal{F}(\tilde u_j,B_{R})+\oo(j)+\int_{B_{R}}\left[F_{\delta}(x,D\tilde u_{j})-F(x,D\tilde u_{j}) \right] \dx \\ &\stackrel{\rif{73}}{=} & \mathcal{F}(u,B_{R})+\oo(j)+\int_{B_{R}}\left[F_{\delta}(x,D\tilde u_{j})-F(x,D\tilde u_{j}) \right] \dx \;.\label{upto}
\end{eqnarray}
By \rif{81} - recall that here $j$ is fixed and $D\tilde u_{j}\in W^{1,q}(B_R)$ - the last integral in the above display goes to zero as $\delta \to 0$, i.e.
\eqn{piccolino}
$$
\int_{B_{R}}\left[F_{\delta}(x,D\tilde u_{j})-F(x,D\tilde u_{j}) \right] \dx=: \oo_j(\delta)
$$
and $\oo_j(\delta)\to 0$ we $\delta \to 0$. 
We conclude that the sequence $\{Du_{j,\delta}\}_{\delta}$ is bounded in $L^q(B_R)$. 
Therefore, up to a not relabelled subsequence (depending on the chosen index $j\in \en$), we find $u_{j}\in \tilde{u}_{j}+ W^{1,q}_0(B_{R})$ such that $
 u_{j,\delta}\rightharpoonup u_{j}$  weakly in $W^{1,q}(B_{R})$ as $\delta\to 0$. 
By \rif{98bis}, \rif{upto} and \rif{piccolino} we now have 
\begin{flalign}\label{98bis2}
\nr{Du_{j,\delta}}_{L^{\infty}(B_{\rr})}\le c\left(\frac{L+L\|h\|_{L^{r}(B_{R+\delta})}}{R-\rr}\right)^{\kappa_1}\left[1+\mathcal{F}(u,B_{R})+ \oo(j)+ \oo_j(\delta)\right]^{\kappa_2}\;,
\end{flalign}
for $c\equiv c(n,\mu,q,r)$. This implies that, up to a not relabelled subsequence, we have 
$
u_{j,\delta}\stackrel{*}{\rightharpoonup}u_{j}$ in $W^{1,\infty}(B_{\rr},\RN)$ as $\delta \to 0$. 
By $\mbox{weak}^{*}$-lower semicontinuity, letting $\delta\to 0$ in \eqref{98bis2} we find
\begin{flalign}\label{98bis3}
\nr{Du_{j}}_{L^{\infty}(B_{\rr})}\le c\left(\frac{L+L\|h\|_{L^{r}(B_{R})}}{R-\rr}\right)^{\kappa_1}\left[1+\mathcal{F}(u,B_{R})+\oo(j)\right]^{\kappa_2}\;,
\end{flalign}
with $c,\kk_1, \kk_2\equiv c,\kk_1, \kk_2(n,\mu,q,r)$. By \rif{81} and \rif{98bis2}  we have
$$
\lim_{\delta\to 0}\int_{B_{\varrho}}\left[F_{\delta}(x,Du_{j,\delta})-F(x,Du_{j,\delta}) \right] \dx =0\;.
$$
As by lower semicontinuity we also have 
$$
\mathcal{F}(u_{j},B_{\varrho})\leq \liminf_{\delta\to 0} \mathcal{F}(u_{j,\delta},B_{\varrho})\;,
$$
we conclude with %%%bozze
\begin{eqnarray*}
\mathcal{F}(u_{j},B_{\varrho})&\leq & \liminf_{\delta\to 0}\int_{B_{\varrho}} F_{\delta}(x,Du_{j,\delta})  \dx\\
&\leq &\limsup_{\delta\to 0} \mathcal{F}_{j,\delta}(u_{j,\delta},B_{R})
\stackrel{\rif{upto},\rif{piccolino}}{\leq} \mathcal{F}(\tilde{u}_{j},B_{R})+\oo(j)\;.
\end{eqnarray*}
Letting $\rr\to R$ in the above inequality finally gives
\begin{flalign}\label{129}
 \mathcal{F}(u_j,B_{R}) \le  \mathcal{F}(\tilde u_j,B_{R}) +\oo(j)
 \stackrel{\rif{73}}{=} \mathcal{F}(u,B_{R}) +\oo(j)\;.
\end{flalign}
By \rif{assF2}$_3$ and this last estimate it follows that the sequence $\{\bar{F}(\snr{Du_{j}})\}$ is bounded in $L^1(B_R)$. Now, \rif{assF3}$_2$ and Dunford-Pettis criterion imply that, up to not relabelled subsequences, there exists $v \in u +W^{1,1}_0(B_R)$ such that
\begin{flalign}\label{123}
u_{j}\rightharpoonup v \ \mbox{in} \ W^{1,1}(B_{R}) \ \mbox{and} \ \ u_{j}\stackrel{*}{\rightharpoonup}v \ \mbox{in} \ W^{1,\infty}(B_{\rr})
\;.
\end{flalign}
%%%bozze
Next, \rif{129} and  \rif{123} imply $ \mathcal{F}(v,B_{R})\leq  \mathcal{F}(u,B_{R})$ via lower semicontinuity. On the other hand, as $u-v \in W^{1,1}_0(B_R)$, the minimality of $u$ yields 
$ \mathcal{F}(u,B_{R})\leq  \mathcal{F}(v,B_{R})$ so that $ \mathcal{F}(v,B_{R})=\mathcal{F}(u,B_{R})$. In turn, the strict convexity of the functional $\mathcal F$ implies $u=v$. 
Finally, using this last fact and $\eqref{123}_{2}$, letting $j \to \infty$ in \rif{98bis3}, lower semicontinuity provide us with \rif{main-ine-lip}. This holds when $\nu=1$; the general case $\nu\not=1$ can be achieved by scaling as said at the very beginning of Step 1. The proof of Theorem \ref{main3} is finally complete up to some clarifications contained in the next final step. 

\subsection{Step 5: Arguments for \rif{assF22}}\label{arguments}
The arguments leading to the precise statement \rif{assF22} are not easy to find in the literature. We therefore report the needed proofs also because we think that they are useful elsewhere; for instance, when dealing with higher gradient regularity. For this, it is sufficient to consider an integrand $G \in C^{2}_{\loc}(\er^n\setminus \{0\})\cap C^{1}_{\loc}(\er^n)$ satisfying
\eqn{assG}
$$
\left\{
\begin{array}{c}
\nu(\lambda^{2}+\snr{z}^{2})^{\frac{q}{2}}\le G(z)\le L(\lambda^{2}+\snr{z}^{2})^{\frac{q}{2}}+  L(\lambda^{2}+\snr{z}^{2})^{\frac{\gamma}{2}}\\ [6 pt]
\nu (\lambda^{2}+\snr{z}^{2})^{\frac{\gamma-2}{2}}\le \partial^2 G(z)\, \xi\cdot\xi\\ [6 pt]
\snr{\partial^2 G(z)}\le L (\lambda^{2}+\snr{z}^{2})^{\frac{q-2}{2}}+L (\lambda^{2}+\snr{z}^{2})^{\frac{\gamma-2}{2}}\;,
 \end{array}\right.
 $$
for every $z, \xi \in \er^n$, $|z|\not=0$, where $q \geq \max\{1,\gamma\}$, $\gamma > 0$, $0 < \nu \leq  1 \leq L $ are fixed constants. We then consider, for $\delta \in (0,1)$
$$
G_{\delta}(z):=\mint_{B_{1}}G(z+\delta  y) \phi(y) \dy \quad \mbox{and}\quad \lambda_\delta := \lambda+\delta
\;,
$$ 
where $\phi(\cdot)$ is as in \rif{mollificatori} and $\delta \in (0,1)$. The newly defined function $G_{\delta}(\cdot)$ satisfies 
\eqn{assGdelta}
$$
\left\{
\begin{array}{c}
\frac { \nu }c(\lambda^{2}_{\delta}+\snr{z}^{2})^{\frac{q}{2}}\le G_{\delta}(z)\le cL(\lambda^{2}_{\delta}+\snr{z}^{2})^{\frac{q}{2}}+cL(\lambda^{2}_{\delta}+\snr{z}^{2})^{\frac{\gamma}{2}}\\ [6 pt]
\frac \nu c (\lambda^{2}_{\delta}+\snr{z}^{2})^{\frac{\gamma-2}{2}}\le \partial^2 G_{\delta}(z)\, \xi\cdot\xi\\ [6 pt]
\snr{\partial^2 G_{\delta}(z)}\le cL(\lambda^{2}_{\delta}+\snr{z}^{2})^{\frac{q-2}{2}}+cL(\lambda^{2}_{\delta}+\snr{z}^{2})^{\frac{\gamma-2}{2}}\;,
 \end{array}\right.
 $$
for every $z, \xi \in \er^n$, where $c\equiv c (n,\gamma,q)$. Observe that once we have proved \rif{assGdelta}, the validity of \rif{assF22} easily follows applying the arguments here to the integrand $G(z):= F(x+\delta\bar{y}, z)$ for every $x\in \overline{B_R}$ and $y \in B_1(0)$, and using basic properties of convolutions.

In order to prove \rif{assGdelta}, we first observe that the proof of \rif{assGdelta}$_1$ follows exactly as in \cite[Lemma 3.1, Step 1]{ELM2}; in particular, the upper bound in \rif{assGdelta}$_1$ is trivial. We also notice that in \cite[Lemma 3.1]{ELM2} a proof of \rif{assGdelta}$_{2,3}$ is provided for different values of $\gamma$ and, more importantly, with a dependence of the constant $c$ on $\sigma$, as far as \rif{assGdelta}$_{3}$ is concerned. This is not the case in \rif{assGdelta} and different arguments from those of \cite{ELM2} are required. We divide the proof of \rif{assGdelta} in two cases. 
 
\emph{Case 1: $0<\gamma<2$.} First, we consider the case $0<\snr{z}\le \sqrt{32}\delta$ and $\lambda\le \sqrt{16}\delta$, then, using the definition of $G_{\delta}(\cdot)$, we have, recalling that $q \geq \gamma$, integrating by parts and using \rif{assG}$_1$
\begin{eqnarray*}
\snr{\partial^2 G(z)}&= &  \delta^{-2}\left|\mint_{B_{1}}G(z+\delta y)\partial^2\phi(y )  \dy  \right|\nonumber \\
&\le &cL\delta^{-2}\mint_{B_{1}}(\lambda^{2}+\snr{z+\delta y}^{2})^{\frac{q}{2}} \d y+cL\delta^{-2}\mint_{B_{1}}(\lambda^{2}+\snr{z+\delta y}^{2})^{\frac{\gamma}{2}}  \dy \nonumber \\
&\le &cL\delta^{-2}(\lambda^{2}+\delta^2+ \snr{z}^{2})^{\frac{\gamma}{2}}\le cL\delta^{\gamma-2}\le c(\gamma,q)L(\lambda_{\delta}^{2}+\snr{z}^{2})^{\frac{\gamma-2}{2}}
\end{eqnarray*}
so that \rif{assGdelta}$_3$ follows in this case. If $\lambda>\sqrt{16}\delta$, we estimate
\begin{eqnarray}
\nonumber \snr{\partial^2 G(z)} &\le & cL\mint_{B_{1}}(\lambda^{2}+\snr{z+\delta y }^{2})^{\frac{q-2}{2}} \phi(y) \dy \\ && \    +cL\mint_{B_{1}}(\lambda^{2}+\snr{z+\delta y }^{2})^{\frac{\gamma-2}{2}} \phi(y) \dy =: I_1 + I_2\;.\label{sti1}
\end{eqnarray}
Notice that there is a potential problem with the convergence of the last two integrals (of the first only in the case $q<2$) when $\lambda=0$; the two integrals are anyway convergent as $q \geq \gamma >0$. A convergence problem would occur only when $n=2$ in the limit case $\gamma=0$. 
We estimate $I_2$ using also Young inequality as follows
\begin{eqnarray}
I_2&
=&cL\mint_{B_{1}}(\lambda^{2}+\snr{z}^{2}+\delta^{2}\snr{y }^{2}+2\delta z\cdot y )^{\frac{\gamma-2}{2}} \phi (y)\d y \nonumber \\
&\le &cL(\lambda^{2}+\snr{z}^{2}-\snr{z}^{2}/2-8\delta^{2})^{\frac{\gamma-2}{2}}\nonumber \\
& 
=& cL(\lambda^{2}/2+\snr{z}^{2}/2+\lambda^{2}/2-8\delta^{2})^{\frac{\gamma-2}{2}}\nonumber \\
&\leq & cL(\lambda^{2}+\snr{z}^{2})^{\frac{\gamma-2}{2}}
\le cL(\lambda_{\delta}^{2}+\snr{z}^{2})^{\frac{\gamma-2}{2}}\;.\label{sti2}
\end{eqnarray}
We have used that that $\lambda^2/2 - 8 \delta^2>0$. 
As for $I_1$, if $q <2$, we estimate as in the previous display with $q $ instead of $y$, getting that 
$I_1\leq c(\lambda_{\delta}^{2}+\snr{z}^{2})^{\frac{q-2}{2}}$. Otherwise, if $q\geq 2$, this last estimate is trivial. Summarizing, estimate \rif{assGdelta}$_3$ follows when $\lambda>\sqrt{16}\delta$ too. 
Finally we consider the case when $\snr{z}>\sqrt{32}\delta$. We estimate exactly as \rif{sti1} and \rif{sti2}, and we have
$$
I_2
\le cL(\lambda^{2}+\snr{z}^{2}/4+\snr{z}^{2}/4-8\delta^{2})^{\frac{\gamma-2}{2}}
\leq c(\lambda^{2}+\snr{z}^{2})^{\frac{\gamma-2}{2}}
\le cL(\lambda_{\delta}^{2}+\snr{z}^{2})^{\frac{\gamma-2}{2}}\;.
$$
Again, $I_1$ can be estimated in the same way if $q < 2$, otherwise the estimation of $I_1$ becomes trivial. This means that \rif{assGdelta}$_3$ has been completely proved in the case $0 < \gamma < 2$. Concerning \rif{assGdelta}$_2$, we have
\begin{flalign*}
\partial^2 G_{\delta}(z)\, \xi \cdot \xi =&\,  \mint_{B_{1}}\partial^2 G_{\delta}(z+\delta y)\phi(y)   \dy  \, \xi\cdot\xi\nonumber \\
\ge&\, \nu\mint_{B_{1}}(\lambda^{2}+\snr{z+\delta y}^{2})^{\frac{\gamma-2}{2}} \phi(y) \dy\, \snr{\xi}^{2}   \ge \frac{ \nu}{c(\gamma)}(\lambda^{2}_{\delta}+\snr{z}^{2})^{\frac{\gamma-2}{2}}\snr{\xi}^{2}\;,
\end{flalign*}
where we used that $\snr{z+\delta y}\le \snr{z}+\delta$, since $\snr{y }\le 1$. This concludes the proof of \rif{assGdelta} in the case $\gamma <2$.\\
\emph{Case 2: $\gamma\geq 2$.} The upper bound in \rif{assGdelta}$_3$ is trivial. As for \rif{assGdelta}$_2$, recalling the last property in \rif{mollificatori}, we have 
\begin{flalign*}
\partial^2 G_{\delta}(z)\, \xi \cdot \xi 
\ge &\,\frac{\nu}{c(n)} \int_{B_{1}\cap \{y \in B_{1}\colon z\cdot y \ge 0\}}(\lambda^{2}+\snr{z}^{2}+\delta^{2}\snr{y }^{2})^{\frac{\gamma-2}{2}} \phi(y) \dy\, \snr{\xi}^{2}   \nonumber \\
\ge &\, \frac{\nu}{c(n)} \int_{(B_{3/4}\setminus B_{1/2})\cap \{y \in B_{1}\colon z\cdot y \ge 0\}}(\lambda^{2}+\snr{z}^{2}+\delta^{2}\snr{y }^{2})^{\frac{\gamma-2}{2}}\phi(y)  \dy\, \snr{\xi}^{2}   \nonumber \\
\ge &\, \frac{\nu}{c}\left(\int_{B_{3/4}\setminus B_{1/2}}\phi(y) \dy\right) \, (\lambda^{2}+\snr{z}^{2}+\delta^{2}/4)^{\frac{\gamma-2}{2}}\snr{\xi}^{2}\ge \frac{\nu}{c}(\lambda_{\delta}^{2}+\snr{z}^{2})^{\frac{\gamma-2}{2}}\snr{\xi}^{2},
\end{flalign*}
and the proof of \rif{assGdelta} is complete. 
\section{Proof of Theorem \ref{sample1}}\label{primot} We derive Theorem \ref{sample1} as a corollary of Theorem \ref{main3}. For this we check that the assumptions of this last theorem are satisfied; we can assume that $a(\cdot)\in W^{1,r}(\Omega)$ as Theorem \ref{sample1} is local. The integrand $F(x, z)= |z|\log(1+|z|)+a(x)(1+|z|^2)^{q/2}$ satisfies \rif{assF2} with $\lambda=\mu=1$, $\bar{F}(t)=t\log(1+t)$. The bound on $q/(2-\mu)$ in \rif{pq} becomes exactly the assumed one in \rif{pqsample} and it is therefore satisfied too. 
It remains to prove that $\mathcal L^q(u, B)=0$ holds for every ball $B \Subset \Omega$. This can be easily seen by modifying the arguments of \cite[Lemma 13]{sharp} or \cite[Section 13]{BCM3}, that we briefly recall here. Notice that \rif{pqsample} implies that $a(\cdot)\in C^{0, \alpha}$ for $\alpha = 1-n/r$ with $[a]_{0, \alpha }\lesssim \|Da\|_{L^{r}}$ and the bound in \rif{pqsample} reads now
\eqn{boundy}
$$
q < 1+ \alpha/n\;.
$$For $\eps \in (0, 1/2)$ with $\eps \leq {\rm  dist}(B, \partial \Omega)/4$, we take the mollified functions  
$u_\eps :=u*\phi_\eps $ (see \rif{mollificatori}). For every $x \in B$ we define 
$\ai(B_{2\eps}(x)):=\inf \{a(y) \colon  y \in  B_{2\eps}(x)\}$ and $F_\eps(x,z):= |z|^p+ a_{{\rm i}}(B_{2\eps}(x))|z|^q\;.$ It trivially follows that  
$
|Du_\eps(x)| \lesssim  \eps^{-n}$. 
Using this and \rif{boundy} we have
%%%bozze
\begin{eqnarray}
\nonumber
F(x, Du_\eps(x)) &\lesssim &[a(x)-a_{{\rm i}}(B_{2\eps}(x))]|Du_\eps(x)|^{q} + F_\eps(x,Du_\eps(x)) +1\\
&\lesssim &[a]_{0,\alpha}\eps^{\alpha}|Du_\eps(x)|^{q-1}|Du_\eps(x)| + F_\eps(x,Du_\eps(x))+1\notag \\
&\lesssim &\eps^{\alpha+n(1-q)}|Du_\eps(x)| + F_\eps(x,Du_\eps(x))+1\notag \\
&\lesssim&  |Du_\eps(x)| + F_\eps(x,Du_\eps(x) )+1\notag \\
&\lesssim& |Du_\eps(x)|\log(1+|Du_\eps(x)|)+F_\eps(x,Du_\eps(x) )+1\notag \\
&\lesssim&
F_\eps(x,Du_\eps(x) )+1\label{prima}
\end{eqnarray}
for every $x\in  B$. 
All the constants involved in the symbol $\lesssim$ above are independent of $\eps$. 
On the other hand, the very definition of $F_\eps(\cdot)$ and Jensen inequality yield 
$$
\nonumber F_\eps(x,Du_\eps(x)) \leq   \int_{B_{\eps}(x)}  F_\eps\left(x, Du(y)\right) \phi_\eps(x-y)\, dy\leq [F(\cdot,Du(\cdot))*\phi_\eps](x)\,.
$$
This last inequality with \rif{prima} gives $F(x, Du_\eps(x))\lesssim [F(\cdot,Du(\cdot))*\phi_\eps](x)+1 $ for every $x\in B$. This implies, by Lebesgue dominated convergence, 
%%%bozze F(x, Du_\eps)\to F(x, Du_\eps)
that $F(x, Du_\eps)\to F(x, Du)$ in $L^{1}(B)$ that is, approximation in energy takes place so that $\mathcal {L}(u, B)=0$ follows. 

\section{Non-uniform ellipticity via uniform ellipticity}\label{moser2}
In this final section we give a streamlined version of the a priori estimates technique employed for Theorem \ref{main3} to show how, in a sense, that method allows to reduce the analysis of non-uniformly elliptic functionals to the analysis of uniformly elliptic ones. The key but somehow subtle 
%%%bozze fact --> point 
point is a combination of the peculiar dependence on the constants appearing in the standard Moser's iteration (see Lemma \ref{lamoser} below), and in the reverse H\"older inequalities for uniformly elliptic equations (see \rif{revvi1} below). This incorporates and quantifies all the non-uniform ellipticity information, instantaneously leading to sharp a priori estimates. To demonstrate the approach, we consider a 
%%%bozze yet  ---> simplified
simplified but yet significant problem; i.e., we take functionals with $(p,q)$-growth as in \rif{genF}, with Lipschitz dependence on $x$, and in the non-degenerate case $\lambda=1$. This is for instance the setting of \cite{M1}, see also \cite{ELM2, sharp}. More in general, non-polynomial growth settings, can be considered too. Specifically, we consider an integrand 
%%%bozze $F \colon \Omega \times \er^n\to \er^n$ e cambiamenti
$F \colon \Omega \times \er^n\to [0, \infty)$, which is assumed to be locally $C^2$-regular with respect to the gradient variable. It satisfies 
\eqn{assF2fine}
$$
\left\{
\begin{array}{c}
\nu(1+\snr{z}^{2})^{\frac{p}{2}}\le F(x,z)\le L(1+\snr{z}^{2})^{\frac{q}{2}}\\ [6 pt]
\nu(1+\snr{z}^{2})^{\frac{p-2}{2}}\snr{\xi}^{2}\le \partial_{zz}F(x,z)\, \xi\cdot\xi\\ [6 pt]
\displaystyle
\snr{\partial_{zz}F(x,z)}+\frac{ \snr{\partial_{xz}F(x,z)}}{(1+|z|^2)^{1/2}}\le L(1+\snr{z}^{2})^{\frac{q-2}{2}} \;,
 \end{array}\right.
 $$
with the same notation of \rif{assF2} and for every $x \in \Omega$. Under such assumptions, local minimizers are locally Lipschitz regular provided the condition
\eqn{boundx}
$$
\frac{q}{p}< 1 +\frac1n
$$
is in force together with $\mathcal L^q\equiv 0$. Condition \rif{boundx} is sharp as shown in \cite{sharp, FMM}. The bound in \rif{boundx} corresponds to \rif{pq} when $r \to \infty$ and $2-\mu=p$. In fact \rif{assF2fine} are a particular case of \rif{assF2} when considering $h=1$, $\lambda=1$, $2-\mu=p$ and $r=\infty$ and the Lipschitz continuity of minima follows from Theorem \ref{main3};  see Remark \ref{esp1}. The outcome is that the a priori bound
\eqn{toylip0}
$$
\|Du\|_{L^{\infty}(B_{R/2})} \leq cR^{-\kappa_1}\left[\mathcal{F}(u,B_{R})\right]^{\kappa_2}\,,\qquad B_R \Subset \Omega\,, \quad R\leq1\;,
$$
holds provided $\mathcal L^q(u, B_R)=0$, with $c\equiv  c (n,\nu,L,p,q)$, $\kappa_1, 
\kappa_2\equiv  \kappa_1, 
\kappa_2 (n,p,q)$ (see \rif{toylip} below for $\kappa_1, 
\kappa_2$ when  $n>2$).
\subsection{Review of the standard case $p=q$}\label{stansec} In the standard case, i.e., when \rif{assF2fine} hold with $p=q$, the proof of \rif{toylip0} goes as follows. %%%bozze levata B_R
With $H(Du):= 1+|Du|^2$, one proves the reverse type inequality 
\eqn{revvi1}
$$
\left(\int_{B_{\varrho_1}}[H(Du)]^{\left(\gamma+\frac{p}{2}\right)\frac{2^{*}}{2}} \dx \right)^{\frac{2}{2^{*}}}
 \le \frac{\tilde c L^2\tilde R (1+\gamma)^2}{(\varrho_2-\varrho_1)^{2}}\int_{B_{\varrho_2}}[H(Du)]^{\gamma+\frac{p}{2}}  \dx\;.
$$
The number $2^{*}>2$ and $\tilde R$ are defined in \rif{immergi} and \rif{tilder}, respectively.  
Inequality \rif{revvi1} holds for a fixed constant $\tilde c \equiv \tilde c (n,\nu,p)$, for every choice of exponent $\gamma \geq 0$ and concentric balls $B_{\varrho_1} \Subset B_{\varrho_2} \Subset B_{R}$; see Remark \ref{trima} below. 
%%%bozze reshuffle
Here we insist on the explicit dependence on $L^2$ in \rif{revvi1} (and on $\tilde R$ when $n=2$). Inequality \rif{revvi1} allows to use the standard Moser iteration, that is Lemma \ref{lamoser} below (with $\chi=2^{*}/2>1$). This yields
\eqn{localp}
$$
\|H(Du)\|_{L^{\infty}(B_{\tau_1})} \leq \frac{c[L^{2}\tilde R]^{\frac{2}{p}\frac{2^*}{2^*-2}}}{(\tau_2-\tau_1)^{\frac{4}{p}\frac{2^*}{2^*-2}}} \|H(Du)\|_{L^{p/2}(B_{\tau_2})}\;,
$$
where $c \equiv c (n,\nu,p)$ and $B_{\tau_2}\Subset B_{\tau_2}\subset B_R$ are arbitrary concentric balls. Recalling that $2^*/(2^*-2)=n/2$ for $n>2$ and the definition of $\tilde R$, we conclude, for $n\geq 2$, with 
\eqn{localp2}
$$
\|H(Du)\|_{L^{\infty}(B_{R/2})} \leq \frac{c}{R^{2n/p}} \|H(Du)\|_{L^{p/2}(B_{R})
}= c\left(\mint_{B_R}[H(Du)]^{p/2} \dx\right)^{2/p}\;,
$$
which is the usual $L^\infty$-$L^p$-estimate for $p$-harmonic functions and that is equivalent to \rif{toylip0}. 

\begin{remark}\label{trima} \emph{Estimate \rif{revvi1} can be obtained by simply checking the dependence on the constants in the standard proof of the local Lipschitz estimate for $p$-Laplacean type equations. As a matter of fact, \rif{revvi1} follows from \rif{0} taking $2-\mu=p=q$, $r=\infty$ (that means $m=1$) and $h(\cdot)\equiv 1$ (and choosing $\eta$ in the obvious way); see Remark \ref{esp1}. Indeed, with such a choice of the parameters, the proof in Section \ref{step2} becomes the usual proof for the $p$-Laplacean case.} 
\end{remark}
\subsection{Reducing the non-standard case to the standard case} We are going to discuss only the aspects concerning a priori estimates. These have to be anyway embedded in the approximation scheme of Sections \ref{apri} and \ref{chiudi}. Therefore we simply denote $u_{j, \delta}\equiv u$ and $H_{\delta}(\cdot)\equiv H(\cdot)$ with the notation of Sections \ref{apri}-\ref{step2}, and show how to derive \rif{toylip0} directly from the material in the preceding Section \ref{stansec}. Observe that, given a concentric ball $B_{\tau_2}\Subset B_R$, as the problem is local, we can tautologically replace assumption \rif{assF2fine}$_3$ by
\eqn{revvi4} 
$$
\snr{\partial_{zz}F(x,z)}+\frac{ \snr{\partial_{xz}F(x,z)}}{(1+|z|^2)^{1/2}}\le L\|H(Du)\|_{L^{\infty}(B_{\tau_2})}^{\frac{q-p}{2}}(1+\snr{z}^{2})^{\frac{p-2}{2}} \,, \quad \mbox{on $B_{\tau_2}$}\;. 
$$
This, together with \rif{assF2fine}$_2$, formally sets back the functional in the realm of those with standard $p$-growth treated in the Section \ref{stansec}. In fact, these are the only assumptions used to prove a priori estimates as \rif{revvi1}. Therefore estimate \rif{localp} applies and reads
%%%bozze (q-p) ---> q-p
\eqn{revvi30}
$$
\|H(Du)\|_{L^{\infty}(B_{\tau_1})} \leq \frac{c\left[L^2\tilde R \|H(Du)\|_{L^{\infty}(B_{\tau_2})}^{q-p}\right]^{\frac{2}{p}\frac{2^*}{2^*-2}}}{(\tau_2-\tau_1)^{\frac{4}{p}\frac{2^*}{2^*-2}}} \|H(Du)\|_{L^{p/2}(B_{\tau_2})}\;.
$$
%$$
%\|H(Du)\|_{L^{\infty}(B_{\tau_1})} \leq \frac{cL^{\frac {2n}{p}}\|H(Du)\|_{L^{\infty}(B_{\tau_2})}^{(q-p)\frac{n}{p}}}{(\tau_2-\tau_1)^{\frac{2n}{p}}} \|H(Du)\|_{L^{p/2}(B_{\tau_2})}\;.
%$$
Recalling \rif{immergi}, we have that for $n>2$ it is 
$2(q-p)2^*/[p(2^*-2)] = (q-p)n/p$, 
while \rif{boundx} implies 
$(q-p)n/p< 1$. Young inequality then yields
\eqn{revvi3}
$$
\|H(Du)\|_{L^{\infty}(B_{\tau_1})} \leq \frac 12 \|H(Du)\|_{L^{\infty}(B_{\tau_2})}
+ \frac{c\|H(Du)\|_{L^{p/2}(B_{\tau_2})}^{\frac{p}{p-n(q-p)}}}{(\tau_2-\tau_1)^{\frac{2n}{p-n(q-p)}}} \;.
$$
Using Lemma \ref{l2} we come to the final local Lipschitz estimate
\eqn{toylip}
$$
\|H(Du)\|_{L^{\infty}(B_{R/2})} \leq cR^{-\frac{2n}{p-n(q-p)}}\|H(Du)\|_{L^{p/2}(B_{R})}^{\frac{p}{p-n(q-p)}}\;,
$$
with $c\equiv c (n,\nu, L, p,q)$, that indeed coincides with \rif{localp} when $p=q$. %Finally, the case $n=2$ can be treated taking in Lemma \ref{lamoser} $\chi\equiv 2^{*}/2$ large enough in order to get $(q-p)\chi/(\chi-1)<1$ and proceeding as above. 
Estimate \rif{toylip} eventually implies \rif{toylip0} via the approximation argument of Section \ref{chiudi}. In the case $n=2$, the bound in \rif{boundx} is $q/p<3/2$. On the other hand, notice that requiring $2(q-p)2^*/[p(2^*-2)]<1$ means to require that $q/p<3/2- 
1/2^*$ that  can be therefore satisfied by choosing $2^*$ large enough. With this remark we can proceed as after \rif{revvi30}, thereby coming again to \rif{toylip0}. 
\begin{remark}\label{esp2} \emph{When adapting the parameters of Theorem \ref{main3} to catch assumptions \rif{assF2fine} considered here, that is taking $\mu=2-p$, $r=\infty$ (that implies $m=1$, $\tau=0$ and $\sigma=q-p$; see Remark \ref{esp1}), in the case $n>2$ the exponents $\kappa_1, \kk_2$ in \rif{iduekappa} become
$$
\kk_1= \frac{2n}{p+n(q-p)} \quad \mbox{and} \quad
\kk_2= \frac{1}{p-n(q-p)}\;,
$$
respectively. This means that estimate \rif{pre98bis} gives back \rif{toylip} upon taking $\varrho =R/2$. 
}

\end{remark}
\subsection{The classical Moser iteration} For completeness, and to make the arguments of this section more self-contained, we include the proof of the standard Moser's iteration scheme. The only difference with the usual versions scattered in the literature relies in the explicit dependence on the constants. 
\begin{lemma}\label{lamoser} Let $B_R\subset \er^n$ be a ball and let $\tilde H\in L^{p/2}(B_R)$ be a non-negative function such that
\eqn{thenmose0} 
$$
\left(\int_{B_{\varrho_1}}\tilde H^{\left(\gamma+\frac{p}{2}\right)\chi}  \dx\right)^{1/\chi}
 \le \frac{c_1(1+\gamma)^t}{(\varrho_2-\varrho_1)^{s}}\int_{B_{\varrho_2}}\tilde H^{\gamma+\frac{p}{2}}  \dx
$$
holds for every $\gamma \geq 0$, where $c_1, t, \chi, p, s$ are positive constants with $\chi>1$, and where $B_{\tau_1} \Subset B_{\varrho_1} \Subset B_{\varrho_2} \Subset B_{\tau_2}\subset B_R$ are arbitrary concentric balls. Then it holds that
\eqn{thenmose} 
$$
\|\tilde H\|_{L^{\infty}(B_{\tau_1})} \leq c(\chi, s,t)\left[\frac{c_1}{(\tau_2-\tau_1)^s}\right]^{\frac{2}{p}\frac{\chi}{\chi-1}} \|\tilde H\|_{L^{p/2}(B_{\tau_2})}\;.
$$
\end{lemma}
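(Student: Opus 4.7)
The plan is a standard Moser iteration along geometric sequences of radii and exponents, with careful bookkeeping of the constants to recover the explicit exponent $(2/p)\chi/(\chi-1)$ appearing in \rif{thenmose}. I would fix concentric balls $B_{\tau_1}\Subset B_{\tau_2}$, and set
\begin{equation*}
\beta_{k}:=\frac{p}{2}\chi^{k-1},\quad \gamma_{k}:=\beta_{k}-\frac{p}{2},\quad \rho_{k}:=\tau_{1}+(\tau_{2}-\tau_{1})2^{-k+1},
\end{equation*}
so that $\rho_{1}=\tau_{2}$, $\rho_{k}\downarrow\tau_{1}$, and $\rho_{k}-\rho_{k+1}=(\tau_2-\tau_1)2^{-k}$. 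The choice of $\beta_k$ is dictated by the identity $(\gamma_k+p/2)\chi=\beta_{k+1}$, which aligns the output exponent of \rif{thenmose0} with the next iterate.

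Setting $Y_k := \|\tilde H\|_{L^{\beta_k}(B_{\rho_k})}$, applying \rif{thenmose0} with $\gamma=\gamma_k$ on the pair $(B_{\rho_{k+1}},B_{\rho_k})$ and taking the $1/\beta_k$-th root would give the one-step recursion
\begin{equation*}
Y_{k+1}\leq \left[\frac{c_1 C_{0}^{\,k}}{(\tau_{2}-\tau_{1})^{s}}\right]^{1/\beta_{k}}Y_{k}
\end{equation*}
for a constant $C_0\equiv C_0(\chi,s,t)$ absorbing the factors $(1+\gamma_k)^t\le c(t)\chi^{kt}$ and $(\rho_k-\rho_{k+1})^{-s}=2^{ks}(\tau_2-\tau_1)^{-s}$. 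Iterating from the base case $Y_1=\|\tilde H\|_{L^{p/2}(B_{\tau_2})}$ yields
\begin{equation*}
Y_{k+1}\leq \prod_{j=1}^{k}\left[\frac{c_1 C_{0}^{\,j}}{(\tau_{2}-\tau_{1})^{s}}\right]^{1/\beta_{j}}Y_{1}.
\end{equation*}
Because $1/\beta_{j}=(2/p)\chi^{-(j-1)}$ and $\chi>1$, both series $\sum_{j\geq 1} 1/\beta_j=(2/p)\chi/(\chi-1)$ and $\sum_{j\geq 1} j/\beta_j<\infty$ converge to sums depending only on $p$ and $\chi$, and a log-sum estimate collapses the product to $c(\chi,s,t)\,[c_{1}/(\tau_{2}-\tau_{1})^{s}]^{(2/p)\chi/(\chi-1)}$.

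The final step would be to let $k\to\infty$: since $B_{\tau_1}\subset B_{\rho_k}$ for every $k$ one has $\|\tilde H\|_{L^{\beta_k}(B_{\tau_1})}\leq Y_k$, and the standard fact $\|f\|_{L^{\beta}(B_{\tau_1})}\to \|f\|_{L^{\infty}(B_{\tau_1})}$ as $\beta\to\infty$ upgrades the left-hand side to $\|\tilde H\|_{L^{\infty}(B_{\tau_1})}$, thereby delivering \rif{thenmose}. The main (and essentially only) obstacle I foresee is the bookkeeping of $(1+\gamma_k)^t$ and $2^{ks}$ under the $k$-th root: the exponent $k/\beta_k$ decays geometrically precisely because $\chi>1$, so the factor $\exp(\sum_j k\log C_0/\beta_j)$ is finite and absorbable into $c(\chi,s,t)$. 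Everything else reduces to routine geometric summation.
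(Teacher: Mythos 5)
Your proof is correct and follows essentially the same route as the paper's: identical radii $\rho_k=\tau_1+(\tau_2-\tau_1)2^{-k+1}$, identical exponents ($\beta_k=(p/2)\chi^{k-1}$ is exactly the paper's $\alpha_k$, generated by the same recursion $\gamma_{k+1}=(\gamma_k+p/2)\chi-p/2$ with $\gamma_1=0$), the same one-step iterate extracted from \rif{thenmose0}, the same telescoping product, and the same passage to the $L^\infty$-norm via $\beta_k\to\infty$.
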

\begin{proof} For integers $k  \geq 1$, we define radii
$\rr_{k }:=\tau_1+(\tau_2-\tau_1)2^{-k +1}$ and exponents $\gamma_{1}:=0$, 
$\gamma_{k +1}:= (\gamma_k +p/2)\chi-p/2$ and $\alpha_{k }:=\gamma_{k}+p/2$, so that \rif{thenmose0} yields
$$
\|\tilde H \|_{L^{\alpha_{k +1}}(B_{\varrho_{k +1}})} \leq   \left[\frac{c_12^{ks }(1+\gamma_k)^t}{(\tau_2-\tau_1)^{s}}\right]^{1/\alpha_{k }}\|\tilde H \|_{L^{\alpha_k }(B_{\varrho_k})}\quad \mbox{and} \quad \alpha_{k +1} = \chi \alpha_{k } = \chi^{k }\alpha_1\;.
$$
Iteration of the above inequalities leads to
\eqn{iteramo}
$$
\|\tilde H \|_{L^{\alpha_{k +1}}(B_{\varrho_{k +1}})} \leq  \prod_{i=1}^{k } \left[\frac{c_12^{i s}(1+\gamma_i)^t}{(\tau_2-\tau_1)^{s}}\right]^{1/\alpha_{i}}\|\tilde H \|_{L^{p/2}(B_{\tau_2})}
$$
again for every $k  \geq 1$. Observing that 
$$
\sum_{i=1}^\infty \frac{1}{\alpha_{i}} = \frac{2}{p}\frac{\chi}{\chi-1}\qquad \mbox{and}\qquad  \prod_{i=1}^{\infty} \left[2^{i s}(1+\gamma_i)^t \right]^{\frac{1}{\alpha_{i}}}\leq \exp\left[\frac{c(s, t, \chi)}{(\chi-1)^2}\right] \;,
$$
letting $k  \to \infty$ in \rif{iteramo} yields \rif{thenmose} and the proof is complete. 
\end{proof}

\end{document}